\newtheoremstyle{theorem}
  {10pt}
  {10pt}
  {\sl}
  {\parindent}
  {\bf}
  {. }
  { }
  {}
\theoremstyle{theorem}
\newtheorem{theorem}{Theorem}
\newtheorem{lemma}{Lemma}
\newtheorem{corollary}{Corollary}
\newtheorem{definition}{Definition}
\begin{document}
\title{\bf Set-valued stochastic integrals for \\ convoluted L\'{e}vy processes}
\author{Weixuan Xia\thanks{University of Southern California, Department of Mathematics, Los Angeles, CA 90089, USA. \newline\indent Email: weixuanx@usc.edu}}
\date{2023}

\maketitle

\thispagestyle{plain}

\begin{abstract}
  In this paper we study set-valued Volterra-type stochastic integrals driven by L\'{e}vy processes. Upon extending the classical definitions of set-valued stochastic integral functionals to convoluted integrals with square-integrable kernels, set-valued convoluted stochastic integrals are defined by taking the closed decomposable hull of the integral functionals for generic time. We show that, aside from well-established results for set-valued It\^{o} integrals, while set-valued stochastic integrals with respect to a finite-variation Poisson random measure are guaranteed to be integrably bounded for bounded integrands, this is not true when the random measure is of infinite variation. For indefinite integrals, we prove that it is a mutual effect of kernel singularity and jumps that the set-valued convoluted integrals are possibly explosive and take extended vector values. These results have some important implications on how set-valued fractional dynamical systems are to be constructed in general. Two classes of set-monotone processes are studied for practical interests in economic and financial modeling. \medskip\\
  \textsc{MSC2020 Classifications:} 28B20; 60G22; 60G57 \medskip\\
  \textsc{Key Words:} Set-valued stochastic processes; infinite-variation jumps; Volterra-type integrals; singular kernels
\end{abstract}

\newcommand{\dd}{{\rm d}}
\newcommand{\pd}{\partial}
\newcommand{\ii}{{\rm i}}
\newcommand{\Gf}{\mathrm{\Gamma}}
\newcommand{\PP}{\mathbb{P}}
\newcommand{\E}{\mathbb{E}}
\newcommand{\cl}{\mathrm{cl}}
\newcommand{\co}{\mathrm{co}}
\newcommand{\dec}{\mathrm{dec}}
\newcommand{\card}{\mathrm{card}}
\newcommand{\0}{\mathbf{0}}

\medskip

\section{Introduction}\label{sec:1}

Set-valued stochastic integrals have been studied extensively for almost three decades. Its first appearance is arguably in [Aumann, 1965] \cite{A2} to define the Lebesgue integral of set-valued functions (multifunctions) motivated by the study of economic preferences without the completeness axiom. Later, [Kisielewicz, 1997] \cite{K3} was the first to define a set-valued It\^{o} integral functional as a subset of an $\mathds{L}^{2}$ space, under the terminology ``sub-trajectory integrals.'' Since then, many works have been devoted to the study of Aumann stochastic integrals and set-valued It\^{o} integrals. In particular, formal definitions of such integrals were presented in [Li and Li, 2009a] \cite{LL1} and [Kisielewicz, 2012] \cite{K5} in terms of the decomposable hulls of the associated integral functionals, based on the notion of measurable selectors. In [Li et al., 2010] \cite{LLL}, set-valued integrals were defined with respect to a square-integrable continuous martingale and [Malinowski, 2013] \cite{M1} also considered set-valued integrals driven by semimartingales, where it was argued that usual definitions result in a discrepancy between the integrals driven by the whole semimartingale and the Minkowski sum of those driven by the finite-variation process and the local martingale in the It\^{o} decomposition.

The interest in studying set-valued stochastic processes principally stems from dynamical systems where the exact values of parameters or mechanisms possess uncertainty or ambiguity. Up until now, plenty of research has been conducted to deal with set-valued systems with uncertainty (e.g., driven by diffusion), by means of set-valued stochastic differential equations or stochastic differential inclusions which are natural generalizations of single-valued stochastic differential equations; we mention, among others, [Kisielewicz, 1991] \cite{K1}, [Kisielewicz, 1993] \cite{K2}, [Kisielewicz, 1997] \cite{K3}, [Kisielewicz, 2005] \cite{K4}, [Zhang et al., 2009] \cite{ZLMO}, [Li and Li, 2009b] \cite{LL2}, [Ren and Wu, 2011] \cite{RW}, and also [Malinowski, 2015] \cite{M2}. In particular, it is noteworthy that for such equations to be well-defined based on the definitions of the set-valued stochastic integrals, integrable boundedness turns out to be a crucial requirement. Unfortunately, in the papers [Michta, 2015] \cite{M5} and [Kisielewicz, 2020a] \cite{K7} it was argued that set-valued It\^{o} integrals, unlike Aumann integrals, are not integrably bounded unless they are a singleton, regardless of whether the integrands are deterministic or stochastic. Such properties have undoubtedly hindered the study of the aforementioned dynamical systems driven by a Brownian motion. Nevertheless, it was later considered in [Kisielewicz and Michta, 2017] \cite{KM} that set-valued It\^{o} integrals can be made integrably bounded if the integrands are chosen to be non-decomposable subsets, most generally as the closed convex hull (taken in function spaces) of absolutely summable processes, which consequently make it possible to continue studying stochastic differential inclusions with non-single-valued It\^{o} integrals. For a more comprehensive treatment of set-valued stochastic integrals we refer to the recent book [Kisielewicz, 2020b] \cite{K8}. In the meantime, [Zhang et al., 2013] \cite{ZMO} defined and studied set-valued stochastic integrals with respect to a Poisson random measure, where attention was paid to finite intensity measures (i.e., processes of the compound Poisson type).

Aside from other areas that utilize dynamical systems, in mathematical finance, set-valued stochastic integrals find applications in analyzing risk measures in the presence of multiple risk factors, allowed to evolve over time according to preset mechanisms. We highlight [Ararat and Feinstein, 2021] \cite{AF}, which studied multi-valued risk measures via set-valued backward stochastic difference equations in a discrete-time setting, and tools for studying related problems in more general continuous-time settings, using set-valued backward stochastic differential equations have recently been developed -- see [Ararat et al., 2023] \cite{AMW} and [Ararat and Ma, 2023+] \cite{AM}. In addition, set-valued stochastic integrals are useful for incorporating stochastically evolving model ambiguity into asset price dynamics in financial markets, e.g., by generalizing the setup in [Liang and Ma, 2020] \cite{LM}, which has imposed such ambiguity up to the level of L\'{e}vy--Khintchine triplets. Noteworthily, the market need not be Markovian and empirical evidence suggests short- or long-term memory effects, which can be captured by introducing frictions via convoluted processes; see, e.g., the pioneering work [Gatheral et al., 2018] \cite{GJR} for volatility modeling.

Indeed, over recent decades, convoluted L\'{e}vy processes that are obtained by integrating a suitable kernel function against a L\'{e}vy process have been studied in great depth, motivated from long-range dependence in randomness sources. Such processes include the classical fractional Brownian motion (see [Nuarlart, 2003] \cite{N} for an overview) and the two-sided fractional L\'{e}vy processes proposed in [Marquardt, 2006] \cite{M4}; see also [Wolpert and Taqqu, 2004] \cite{WT} and [Barndorff-Nielsen et al., 2014] \cite{B-NBPV}. A formal treatment of the so-called ``convoluted L\'{e}vy processes'' was given in [Bender and Marquardt, 2008] \cite{BM}, which considered their calculus, as well as [Tikanm\"{a}ki and Mishura, 2011] \cite{TM}, which considered various integral transformations of L\'{e}vy processes to construct convoluted processes and studied their regularity properties.

Combining the above aspects, it is natural to consider set-valued convoluted L\'{e}vy processes and more generally set-valued convoluted stochastic integrals, which have not been formally defined and studied so far. By doing so one also has the potential to consider model ambiguity in parameters associated with memory, such as fraction indices, in stochastic environments.

Moreover, in multi-utility maximization problems under imprecise tastes, set-valued integrals can also be applied to describe the general evolution of a utility set (see [Xia, 2023+] \cite{X}), when it is allowed to be time-varying subject to various dynamic factors. Therefore, if the market is driven by convoluted L\'{e}vy processes to allow for short- or long-range dependence and jumps in returns as mentioned above, it is crucial to define their set-valued integration before preferential changes may be modeled with these advanced characteristics as well.

In this paper our focus will be on the construction of set-valued convoluted stochastic integrals under square-integrable kernels, as an adequate generalization of classical stochastic integrals studied in [Kisielewicz, 2020b] \cite{K8} and [Zhang et al., 2013] \cite{ZMO}. Most importantly, we will consider integrals with respect to infinite Poisson random measures and give some of its unexplored properties such as integrable unboundedness (comparable to the It\^{o} case). It is worth mentioning that this type of integrals are not conceptually substitutable by finite measure-driven ones plus It\^{o} integrals. For example, the activity index ([Blumenthal and Getoor, 1961] \cite{BG}) of a semimartingale with infinite-variation jumps strictly lies between 1 and 2 and can be of the essence in depicting the behavior of financial time series observed at high frequencies (see, e.g., [Todorov and Tauchen, 2011] \cite{TT} and also [Wang and Xia, 2022] \cite{WX} in the case of volatility modeling). Last but not least, we shall as well investigate the effect of kernel singularity on the integrability and explosiveness of the convoluted integrals to understand how related models should be written for potential applications.

\medskip

\section{Preliminaries}\label{sec:2}

To begin with, we synthesize some basic concepts and properties related to L\'{e}vy processes and set-valued random variables. These concepts are kept to line with the main results to be stated and practical interests as closely as possible. For more general definitions and detailed properties we refer to [Applebaum, 2009] \cite{A1} and [Lyasoff, 2017, Chapters 15--16] \cite{L} for L\'{e}vy processes as well as [Fryszkowski, 2004] \cite{F} and [Kisielewicz, 2020b, Chapters I--III] \cite{K8} for set-valued random variables.

\subsection{L\'{e}vy processes and convoluted integrals}\label{sec:2.1}

Let $(\Omega,\mathcal{F},\PP;\mathbb{F}\equiv(\mathscr{F}_{t})_{t\in[0,T]})$ be a complete filtered (separable) probability space with $T>0$ fixed, where the filtration $\mathbb{F}$ satisfies the usual conditions, with $\mathscr{F}_{T}=\mathcal{F}$, and on which we define $\xi\equiv(\xi_{t})_{t\in[0,T]}$ to be a general $d$-dimensional L\'{e}vy process admitting the following L\'{e}vy--It\^{o} decomposition:
\begin{equation}\label{2.1}
  \xi_{t}=\mu t+\sigma W_{t}+\int_{\|z\|\geq1}zN([0,t],\dd z)+\int_{0<\|z\|<1}z\tilde{N}([0,t],\dd z),\quad t\in[0,T],
\end{equation}
where $\mu\in\mathds{R}^{d}$ is the drift parameter, $\sigma\in\mathds{R}^{d\times m}$ is the Brownian dispersion parameter, $W\equiv(W_{t})$ is an $m$-dimensional Brownian motion and $N$ is a Poisson random measure on $(\mathds{R}^{d}\setminus\{\0\},[0,T])$ with L\'{e}vy (intensity) measure $\nu$ such that $\int_{\mathds{R}^{d}\setminus\{\0\}}(1\wedge\|z\|^{2})\nu(\dd z)<\infty$. By convention $N$ is independent from $W$, and we denote by $\tilde{N}\equiv N-\nu\times\mathrm{Leb}_{[0,T]}$ the corresponding compensated Poisson random measure.

Throughout the paper, we use the subscript-less notation $\|\;\|$ for the Euclidean norm of vectors and $\|\;\|_{\rm F}$ for the Frobenius norm of matrices. For $E$ a Euclidean space, $D$ a subset of $[0,T]\times\mathds{R}^{d}$, and $M$ a $\upsigma$-finite measure on $[0,T]\times\mathds{R}^{d}$, we shall denote by $\mathds{L}^{p}(D\times\Omega,\mathcal{G},M\times\PP;E)$ ($p\geq1$) the space of all equivalence (under $M\times\PP$-a.e.) classes of $p$-integrable functions $X:D\times\Omega\mapsto E$, where $\mathcal{G}$ is some sub-$\upsigma$-algebra of $\mathcal{B}([0,T])\otimes\mathcal{F}$. It is understood that $\mathds{L}^{p}(D\times\Omega,\mathcal{G},M\times\PP;E)$ is for every $p\geq1$ a normed space equipped with the norm
\begin{equation*}
  \|X\|_{\mathds{L}^{p}}=\bigg(\E\int_{D}\|X\|^{p}\dd M\bigg)^{1/p}.
\end{equation*}
Whenever the topology and measures are trivially understood, the notation is often abbreviated as $\mathds{L}^{p}(\; ;\;)$.

Besides, if $\nu(\{z\in\mathds{R}^{d}:0<\|z\|<1\})<\infty$ then the Poisson random measure $N$ is said to be of finite activity, in which case it represents a homogeneous Poisson process; otherwise, it is of infinite activity. Further, if $\int_{0<\|z\|<1}\|z\|\nu(\dd z)<\infty$ then $N$ is said to be of finite variation, or else it is of infinite variation. In the infinite-variation case, the general decomposition (\ref{2.1}) stands, and our interest will be more on the fourth term of (\ref{2.1}) with respect to the compensated measure.

We shall use the following definition for Volterra-type integrals.

\begin{definition}\label{def:1}
A function $g:\{(t,s):t\in(0,T],s\in[0,t)\}\mapsto\mathds{R}$ is said to be a (suitable) kernel of Volterra type if: \medskip\\
(i) $g(t,\cdot):[0,t)\mapsto\mathds{R}$ is measurable for every $t\in(0,T]$; \\
(ii) $g(t,\cdot)\not\equiv0$ and $g(t,\cdot)\in\mathds{L}^{2}([0,t);\mathds{R})$, for every $t\in(0,T]$; \\
(iii) $g$ is continuously differentiable in the domain $\{(t,s):t\in(0,T),s\in(0,t)\}$.
\end{definition}

The Volterra-type kernel $g$ is called stationary if $g(t,s)=g(t-s)$ for every $t\in(0,T]$ and $s\in[0,t)$. If $\lim_{s\nearrow t}|g(t,s)|=\infty$ for some $t\in(0,T]$, the kernel is said to be singular at time $t$. Clearly, for stationary kernels, singularity is only meaningful for the entire domain $[0,T]$.

With such a (suitable) kernel $g$ defined above, a $g$-convoluted L\'{e}vy process is given by the following Volterra-type integral:
\begin{align*}
  \xi^{(g)}_{t}&:=\int^{t}_{0}g(t,s)\dd\xi_{s}\\
  &=\mu\int^{t}_{0}g(t,s)\dd s+\sigma\int^{t}_{0}g(t,s)\dd W_{s}+\int^{t}_{0}\int_{\|z\|\geq1}g(t,s)zN(\dd s,\dd z)\\
  &\qquad+\int^{t}_{0}\int_{0<\|z\|<1}g(t,s)z\tilde{N}(\dd s,\dd z),\quad t\in(0,T].
\end{align*}
We will stick to the superscript notation $(g)$ to indicate the convoluting kernel. Some well-known examples are given below, with commonly used kernels of Volterra type that only take positive values.

\medskip

\textsl{Example 1.}\quad The exponential kernel is given for a parameter $\kappa>0$ by
\begin{equation*}
  g(t,s)\equiv g(t-s)=e^{-\kappa(t-s)},
\end{equation*}
which is clearly stationary and never singular. In this case $\xi^{(g)}$ is none but a L\'{e}vy-driven Ornstein--Uhlenbeck process.

\medskip

\textsl{Example 2.}\quad The Riemann--Liouville kernel, which sits at the cornerstone of fractional calculus, reads
\begin{equation*}
  g(t,s)\equiv g(t-s)=\frac{(t-s)^{\beta-1}}{\Gf(\beta)},
\end{equation*}
for a parameter $\beta>1/2$ and where $\Gf$ denotes the usual gamma function. Obviously, $g$ is stationary and it is singular if and only if $\beta\in(1/2,1)$. In this case $\xi^{(g)}$ is a one-sided fractional L\'{e}vy process (see, e.g., [El Euch and Rosenbaum, 2018, Equation (1.3)] \cite{ER}).

\medskip

\textsl{Example 3.}\quad The Molchan--Golosov kernel ([Molchan and Golosov, 1969] \cite{MG}) reads for $\beta>1/2$
\begin{equation*}
  g(t,s)=(t-s)^{\beta-1}\;_{2}\mathrm{F}_{1}\bigg(-\beta,\beta-1;\beta;-\frac{t-s}{t}\bigg),
\end{equation*}
where $\;_{2}\mathrm{F}_{1}$ is the Gauss hypergeometric function. Then, $g$ is not stationary but can be shown to be singular if and only if $\beta\in(1/2,1)$. With such a kernel, if $N\equiv0$ then it has been shown in [Jost, 2006] \cite{J} that $\xi^{(g)}$ is equivalent to a two-sided (i.e., real-valued time) fractional Brownian motion.

\medskip

There are also many other hybrid kernels that result from mixing in various ways the exponential and Riemann--Liouville types to gain practical interests, and we refer to [Wolpert and Taqqu, 2004] \cite{WT} and [Wang and Xia, 2022] \cite{WX}, among others.

Likewise, since $\int^{t}_{0}g^{2}(t,s)\dd s<\infty$ for any $t\in(0,T]$, we can consider a general Volterra-type stochastic integral with respect to time, the Brownian motion $W$, and the Poisson random measure (compensated measure) $N$ ($\tilde{N}$), i.e., a process of the form\footnote{Of course, the kernel can vary across different integrators. The formulation is adopted for succinctness as focus is currently on the general effect of convolution.}
\begin{align}\label{2.2}
  X^{(g)}_{t}&=X_{0}+\int^{t}_{0}g(t,s)f_{1}(s-)\dd s+\int^{t}_{0}g(t,s)f_{2}(s-)\dd W_{s}+\int^{t}_{0}\int_{\|z\|\geq1}g(t,s)f_{3}(s-,z)N(\dd s,\dd z) \nonumber\\
  &\qquad+\int^{t}_{0}\int_{0<\|z\|<1}g(t,s)f_{4}(s-,z)\tilde{N}(\dd s,\dd z),\quad t\in(0,T],
\end{align}
where $X_{0}$ is $\mathscr{F}_{0}$-measurable, and $f_{q}$, for $q\in\{1,2,3,4\}$, are predictable processes over $[0,T]$ such that $(gf_{1})(t,\cdot)\in\mathds{L}^{p}\big([0,t)\times\Omega,\mathcal{P}(\mathbb{F}),\mathrm{Leb}_{[0,t)}\times\PP;\mathds{R}^{d}\big)$ for some $p\geq1$, $(gf_{2})(t,\cdot)\in\mathds{L}^{2}\big([0,t)\times\Omega,\mathcal{P}(\mathbb{F}),\mathrm{Leb}_{[0,t)}\times\PP;\mathds{R}^{d\times m}\big)$, $(gf_{3})(t,\cdot)\in\mathds{L}^{2}\big([0,t)\times(\mathds{R}^{d}\setminus\{\0\})\times\Omega,\mathcal{P}(\mathbb{F}), \mathrm{Leb}_{[0,t)}\times\nu\upharpoonright_{\{\|z\|\geq1\}}\times\PP;\mathds{R}^{d}\big)$, and $(gf_{4})(t,\cdot)\in\mathds{L}^{2}\big([0,t)\times(\mathds{R}^{d}\setminus\{\0\})\times\Omega,\mathcal{P}(\mathbb{F}), \mathrm{Leb}_{[0,t)}\times\nu\upharpoonright_{\{0<\|z\|<1\}}\times\PP;\mathds{R}^{d}\big)$, for every $t\in(0,T]$, where $\mathcal{P}(\mathbb{F})$ denotes the predictable sub-$\upsigma$-algebra, namely the $\upsigma$-algebra generated by all left-continuous $\mathbb{F}$-adapted processes, and $\upharpoonright$ denotes measure restriction. In particular, $f_{q}$'s can be required to satisfy the conditions
\begin{align}\label{2.3}
  &\sup\{\E\|f_{q}(t)\|^{p}:t\in[0,T]\}<\infty,\quad q\in\{1,2\}, \nonumber\\
  &\sup\bigg\{\E\int_{\|z\|\geq1}\|f_{q}(t,z)\|^{p}\nu(\dd z):t\in[0,T]\bigg\}<\infty,\quad q=3, \nonumber\\
  &\sup\bigg\{\E\int_{0<\|z\|<1}\|f_{q}(t,z)\|^{p}\nu(\dd z):t\in[0,T]\bigg\}<\infty,\quad q=4,
\end{align}
where $p=2$ for $q\in\{2,4\}$. These basically ensure that we are considering integrands in the targeted $\mathds{L}^{p}$ spaces as for non-convoluted integrals.

\subsection{Set-valued random variables}\label{sec:2.2}

For the Euclidean space $E$, $\mathcal{S}(E)$ denotes the space of all nonempty subsets of $E$ and $\mathrm{Cl}(E)$ denotes the space of nonempty closed subsets of $E$. A set-valued random variable is a multifunction $X:\Omega\mapsto\mathrm{Cl}(E)$ that is $\mathcal{F}$-measurable, i.e., with the inverse $X^{-1}(A):=\{\omega\in\Omega:X(\omega)\cap A\neq\emptyset\}\in\mathcal{F}$ for every open subset $A\subset E$. By convention we write $\cl$ and $\co$ for the closure and convex hull operators, respectively, which are by default taken in the corresponding Euclidean space if there is no attached subscript; for normed spaces we can impose $\overline{\co}\equiv\cl\co$. For such an $X$, $\cl X$ and $\co X$ are both measurable if $X$ is.

A selector $f$ of the set-valued random variable $X$ is one such that $f(\omega)\in X(\omega)$ for $\PP$-\text{a.e.} $\omega\in\Omega$, whose existence follows from the Zermelo axiom of choice. Moreover, by the Kuratowski--Ryll--Nardzewski theorem, $X$ admits a measurable selector $f$ (i.e., a single-valued random variable) such that $f\in X$, $\PP$-a.s.

The following useful lemma follows from [Kisielewicz, 2020b, Chapter II Theorem 2.2.3] \cite{K8} and the separability of $E$.

\begin{lemma}\label{lem:1}
For a measurable set-valued random variable $X:\Omega\mapsto\mathrm{Cl}(E)$, there exists a sequence $\{f_{n}:n\in\mathds{N}_{++}\}\subset X$ of single-valued random variables (measurable selectors of $X$) such that $X=\cl\{f_{n}:n\in\mathds{N}_{++}\}$, $\PP$-a.s.
\end{lemma}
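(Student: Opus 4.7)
The plan is to establish the classical Castaing representation by combining the Kuratowski--Ryll--Nardzewski (KRN) selection theorem with the separability of the Euclidean space $E$. The idea is to exhaust a countable dense family of ``anchor'' balls in $E$ and, for each one, to produce a measurable selector that is forced into that ball whenever possible; the resulting countable family of selectors will then be pointwise dense in $X$.

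To set things up, I would fix a countable dense subset $\{e_{k}\}_{k\in\mathds{N}_{++}}$ of $E$ and, for every pair $(k,n)\in\mathds{N}_{++}^{2}$, consider the set $A_{k,n}:=X^{-1}(B(e_{k},1/n))$, which lies in $\mathcal{F}$ by the very definition of measurability of $X$. On $A_{k,n}$, define the restricted multifunction $Y_{k,n}(\omega):=X(\omega)\cap\overline{B}(e_{k},1/n)$. Since $X$ is measurable and closed-valued and the constant closed ball is trivially so, the intersection $Y_{k,n}$ is a closed-valued measurable multifunction on $(A_{k,n},\mathcal{F}|_{A_{k,n}})$; applying KRN produces a measurable selector $h_{k,n}$ of $Y_{k,n}$. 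I would then fix one global measurable selector $f_{0}$ of $X$ (again via KRN on $\Omega$) and glue by setting $f_{k,n}:=h_{k,n}$ on $A_{k,n}$ and $f_{k,n}:=f_{0}$ on $\Omega\setminus A_{k,n}$. Reindexing the countable family $\{f_{k,n}\}_{(k,n)\in\mathds{N}_{++}^{2}}$ by $\mathds{N}_{++}$ yields the sought sequence $\{f_{n}\}$, each a measurable selector of $X$.

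The inclusion $\cl\{f_{n}(\omega):n\in\mathds{N}_{++}\}\subseteq X(\omega)$ is immediate since $X$ is closed-valued and every $f_{n}$ is a selector of $X$. For the reverse inclusion I would argue pointwise: given $\omega$, any $x\in X(\omega)$, and any $\varepsilon>0$, pick $n$ with $2/n<\varepsilon$ and then, by density of $\{e_{k}\}$, some $k$ with $\|x-e_{k}\|<1/n$; this forces $\omega\in A_{k,n}$, so $f_{k,n}(\omega)\in\overline{B}(e_{k},1/n)$ and hence $\|f_{k,n}(\omega)-x\|\leq 2/n<\varepsilon$, which places $x$ in the claimed closure.

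The main obstacle is not the density argument, which is elementary, but rather the measurability bookkeeping needed to invoke KRN on each $Y_{k,n}$: one must know that the intersection of a measurable closed-valued multifunction with a constant closed ball is again measurable, and that the restricted measurable space $(A_{k,n},\mathcal{F}|_{A_{k,n}})$ is an admissible domain for the selection theorem. These are standard facts packaged in the cited Kisielewicz reference, so the proposal here is essentially an explicit unpacking of that citation into the Castaing construction, with separability of $E$---automatic for a Euclidean space---supplying the countable approximation that drives the pointwise closure identity.
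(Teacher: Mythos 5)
Your proposal is correct and is precisely the standard Castaing-representation argument that the paper invokes by citation (the lemma is stated as a consequence of [Kisielewicz, 2020b, Ch.~II Thm.~2.2.3] plus separability of $E$, with no proof given); your construction via the sets $A_{k,n}=X^{-1}(B(e_{k},1/n))$, the KRN selectors of the truncated multifunctions, and the gluing with a global selector is exactly the usual proof of that cited result. The one step you flag as bookkeeping---measurability of $\omega\mapsto X(\omega)\cap\overline{B}(e_{k},1/n)$---is indeed standard here since the constant ball is compact in the Euclidean space $E$, so nothing is missing.
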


For $A,B\in\mathrm{Cl}(E)$, the Hausdorff distance (metric) is denoted as
\begin{equation*}
  \mathbf{d}_{\rm H}(A,B):=\max\Big\{\sup_{a\in A}\inf_{b\in B}\|a-b\|,\sup_{b\in B}\inf_{a\in A}\|a-b\|\Big\},
\end{equation*}
and $(\mathrm{Cl}(E),\mathbf{d}_{\rm H})$ forms a complete metric space.

Given a sub-$\upsigma$-algebra $\mathcal{G}\subset\mathcal{F}$, a subset $K$ of the $\mathds{L}^{p}(D\times\Omega,\mathcal{G},M\times\PP;E)$ space is said to be ($\mathcal{G}$-)decomposable if for every $f,l\in K$ and $A\in\mathcal{G}$ it holds that $\mathds{1}_{A}f+\mathds{1}_{A^{\complement}}l\in K$ (with complement taken in $\Omega$). We write $\dec K$ the decomposable hull of $K$ in $\mathds{L}^{p}$ and $\overline{\dec}K$ its closure in $\mathds{L}^{p}$.

For the set-valued random variable $X$, $S^{p}_{\mathcal{G}}(X):=\{f\in\mathds{L}^{p}(\Omega;E):f\in X,\;\PP\text{-a.s.}\}$ denotes the collection of $\mathcal{G}$-measurable $p$-integrable selectors of $X$. The set-valued random variable $X$ is said to be $p$-integrable as long as $S^{p}_{\mathcal{F}}(X)\neq\emptyset$; it is said to be $p$-integrably bounded if there exists $h\in\mathds{L}^{p}(\Omega;\mathds{R}_{+})$ such that $\mathbf{d}_{\rm H}(X,\{\0\})\leq h$, $\PP$-a.s.; an equivalent condition is that $\E\mathbf{d}^{p}_{\rm H}(X,\{\0\})<\infty$ (see, e.g., [Hiai and Umegaki, 1977] \cite{HU}). More generally, a nonempty subset $K\subset\mathds{L}^{p}(D\times\Omega,\mathcal{G},M\times\PP;E)$, for $\mathcal{G}$ a sub-$\upsigma$-algebra of $\mathcal{B}([0,T])\otimes\mathcal{B}(\mathds{R}^{d})\otimes\mathcal{F}$, is called $p$-integrably bounded if there exists $h\in\mathds{L}^{p}(D\times\Omega,\mathcal{G},M\times\PP;\mathds{R}_{+})$ such that $\|f\|\leq h$, $M\times\PP$-a.e., for every $f\in K$. The expectation of the set-valued random variable $X$ conditional on $\mathcal{G}\subset\mathcal{F}$ is defined to be the $\mathcal{G}$-measurable random variable $Z$ such that $S^{p}_{\mathcal{G}}(Z)=\cl\{\E(f|\mathcal{G}):f\in S^{p}_{\mathcal{F}}(X)\}$. The following classical theorem due to [Hiai and Umegaki, 1977, Theorem 3.1] \cite{HU} explains the connection between decomposability and the collection of integrable selectors.

\begin{theorem}\label{thm:1}
Let $K\subset\mathds{L}^{p}(\Omega,\mathcal{G},\PP;E)$ be a nonempty closed subset. Then there exists a $\mathcal{G}$-measurable random variable $X:\Omega\mapsto\mathrm{Cl}(E)$ such that $S^{p}_{\mathcal{G}}(X)=K$ if and only if $K$ is decomposable.
\end{theorem}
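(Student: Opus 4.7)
The plan is to establish the two implications separately, with the ``only if'' direction being routine and the ``if'' direction carrying the main technical weight. For necessity, assuming $K = S^{p}_{\mathcal{G}}(X)$, I would take arbitrary $f,l \in K$ and $A \in \mathcal{G}$ and observe that $h := \mathds{1}_{A} f + \mathds{1}_{A^{\complement}} l$ is $\mathcal{G}$-measurable, $p$-integrable via the pointwise bound $\|h\|^{p} \leq \|f\|^{p} + \|l\|^{p}$, and a selector of $X$ $\PP$-a.s.; hence $h \in K$, yielding decomposability.

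For sufficiency, the strategy is to build $X$ explicitly from a countable skeleton of $K$. Exploiting separability of $\mathds{L}^{p}(\Omega,\mathcal{G},\PP;E)$ (inherited from that of the Euclidean space $E$), I would fix a countable dense subset $\{f_{n} : n \in \mathds{N}_{++}\} \subset K$ and set $X(\omega) := \cl\{f_{n}(\omega) : n \in \mathds{N}_{++}\}$, which is $\mathcal{G}$-measurable as the pointwise closure of countably many measurable single-valued maps, cf.\ Lemma \ref{lem:1}. The inclusion $K \subset S^{p}_{\mathcal{G}}(X)$ is then immediate: each $f_{n}$ is a selector by construction, and $S^{p}_{\mathcal{G}}(X)$ is $\mathds{L}^{p}$-closed (any $\mathds{L}^{p}$-limit admits an a.s.-convergent subsequence, and closedness of $X(\omega)$ preserves the selector property), so $K = \cl\{f_{n}\}_{\mathds{L}^{p}} \subset S^{p}_{\mathcal{G}}(X)$.

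The reverse inclusion is the heart of the matter. Given $f \in S^{p}_{\mathcal{G}}(X)$ and $\epsilon > 0$, I would form the $\mathcal{G}$-measurable sets $A^{\epsilon}_{n} := \{\omega : \|f(\omega) - f_{n}(\omega)\| < \epsilon\}$, whose union covers $\Omega$ $\PP$-a.s.\ by the definition of $X$, and disjointify them as $B^{\epsilon}_{n} := A^{\epsilon}_{n} \setminus \bigcup_{k<n} A^{\epsilon}_{k}$. Iterating the decomposability hypothesis $N-1$ times, the truncations
\[ g^{(N)} := \sum_{n=1}^{N} \mathds{1}_{B^{\epsilon}_{n}} f_{n} + \mathds{1}_{\Omega \setminus \bigcup_{n \leq N} B^{\epsilon}_{n}} f_{1} \]
all lie in $K$ and are uniformly bounded by $\|f\| + \epsilon + \|f_{1}\| \in \mathds{L}^{p}$; dominated convergence then lifts the pointwise limit $g_{\epsilon} := \sum_{n \geq 1} \mathds{1}_{B^{\epsilon}_{n}} f_{n}$ into $\cl K = K$. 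Since $\|g_{\epsilon} - f\| \leq \epsilon$ pointwise (hence $\|g_{\epsilon} - f\|_{\mathds{L}^{p}} \leq \epsilon$), sending $\epsilon \to 0$ and invoking closedness of $K$ one last time produces $f \in K$.

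The principal obstacle is precisely this final construction: one must upgrade the \emph{finite} decomposability of $K$ to a \emph{countable} assembly operation in order to reconstruct an arbitrary selector $f$ from the dense family $\{f_{n}\}$. The bridge is the pointwise $\mathds{L}^{p}$-domination by $\|f\| + \epsilon + \|f_{1}\|$, which legitimises the $N \to \infty$ limit inside the closed set $K$; separability of $\mathds{L}^{p}$ underwrites both the choice of $\{f_{n}\}$ and the $\PP$-a.s.\ exhaustion $\bigcup_{n} A^{\epsilon}_{n} = \Omega$, and measurable disjointification is what keeps every $g^{(N)}$ inside $K$ at each stage.
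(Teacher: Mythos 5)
Your proof is correct and is essentially the classical Hiai--Umegaki argument: the paper does not prove Theorem \ref{thm:1} itself but cites [Hiai and Umegaki, 1977, Theorem 3.1], whose proof proceeds exactly as you describe (a Castaing-type representation $X=\cl\{f_{n}\}$ built from a countable dense subset of $K$, followed by $\epsilon$-approximation of an arbitrary selector through finite decomposable combinations over a disjointified cover, closed off by dominated convergence and the closedness of $K$). One small imprecision: the separability of $\mathds{L}^{p}(\Omega,\mathcal{G},\PP;E)$ needed to extract the dense sequence $\{f_{n}\}$ comes from the assumed separability of the underlying probability space together with that of $E$, not from $E$ alone; this is covered by the paper's standing assumption that $(\Omega,\mathcal{F},\PP)$ is separable.
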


The same result applies directly to the (more general) $\mathds{L}^{p}(D\times\Omega,\mathcal{G},M\times\PP;E)$ space, with $\mathcal{G}\subset\mathcal{B}([0,T])\otimes\mathcal{B}(\mathds{R}^{d})\otimes\mathcal{F}$. In fact, for any open or closed balls $B\subset\mathds{L}^{p}(D\times\Omega,\mathcal{G},M\times\PP;E)$ it holds that $\dec B=\mathds{L}^{p}(D\times\Omega,\mathcal{G},M\times\PP;E)$ (see [Fryszkowski, 2004, Proposition 51] \cite{F}); hence it follows that strict subsets of $\mathds{L}^{p}(D\times\Omega,\mathcal{G},M\times\PP;E)$ that are decomposable must have an empty interior. The following lemma, owing to [Michta, 2015, Theorem 2.2] \cite{M5}, is useful for exploring connections between decomposability and integrability later on.

\begin{lemma}\label{lem:2}
Let $K\subset\mathds{L}^{p}(D\times\Omega,\mathcal{G},M\times\PP;E)$ be a nonempty subset. Then the following two assertions are equivalent: \medskip\\
(i) $K$ is $p$-integrably bounded;\\
(ii) $\overline{\dec}_{\mathcal{G}}K$ is a bounded subset of $\mathds{L}^{p}(D\times\Omega,\mathcal{G},M\times\PP;E)$.
\end{lemma}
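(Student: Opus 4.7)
The plan is to establish the equivalence by treating the two implications separately, with the harder direction resting on a pointwise-maximum construction inside the decomposable hull.

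For (i) $\Rightarrow$ (ii), I would suppose $\|f\|\leq h$, $M\times\PP$-a.e., for some $h\in\mathds{L}^{p}(D\times\Omega;\mathds{R}_{+})$ and every $f\in K$. Any $\phi\in\dec_{\mathcal{G}}K$ has the form $\phi=\sum_{i=1}^{n}\mathds{1}_{A_{i}}f_{i}$ for a $\mathcal{G}$-measurable partition $\{A_{i}\}$ and $f_{i}\in K$; hence $\|\phi\|=\sum_{i=1}^{n}\mathds{1}_{A_{i}}\|f_{i}\|\leq h$, $M\times\PP$-a.e., giving $\|\phi\|_{\mathds{L}^{p}}\leq\|h\|_{\mathds{L}^{p}}$. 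This bound extends to the $\mathds{L}^{p}$-closure by continuity of the norm.

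For the converse (ii) $\Rightarrow$ (i), assume $\|\phi\|_{\mathds{L}^{p}}\leq C$ uniformly over $\phi\in\overline{\dec}_{\mathcal{G}}K$. The key observation is that, given any finite collection $f_{1},\dots,f_{n}\in K$, the $\mathcal{G}$-measurable partition $\{A_{i}\}_{i=1}^{n}$ obtained by assigning each point of $D\times\Omega$ to the smallest-indexed $i$ attaining $\max_{1\leq j\leq n}\|f_{j}\|$ yields an element $\phi_{n}:=\sum_{i=1}^{n}\mathds{1}_{A_{i}}f_{i}\in\dec_{\mathcal{G}}K$ with $\|\phi_{n}\|=\max_{1\leq i\leq n}\|f_{i}\|$ pointwise. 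Consequently, the family $\mathcal{H}:=\{\|f\|:f\in\dec_{\mathcal{G}}K\}$ is closed under finite maxima, and each of its members has $\mathds{L}^{p}$-norm at most $C$.

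Next, I would invoke the classical essential-supremum theorem for upward-directed families of nonnegative measurable functions to extract a countable sequence $\{\psi_{n}\}\subset\dec_{\mathcal{G}}K$ such that $h:=\sup_{n}\|\psi_{n}\|$ equals the $M\times\PP$-essential supremum of $\mathcal{H}$; in particular $\|f\|\leq h$, $M\times\PP$-a.e., for every $f\in K$. Monotone convergence applied to the increasing sequence $\max_{1\leq i\leq n}\|\psi_{i}\|\nearrow h$, each term of which is realized as the Euclidean norm of some element of $\dec_{\mathcal{G}}K$ and therefore has $\mathds{L}^{p}$-norm at most $C$, yields $\E\int_{D}h^{p}\dd M\leq C^{p}$. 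Thus $h\in\mathds{L}^{p}(D\times\Omega;\mathds{R}_{+})$ serves as the required envelope, and $K$ is $p$-integrably bounded.

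The step I expect to be most delicate is producing a genuine envelope for the (possibly uncountable) family $K$ from the uniform $\mathds{L}^{p}$-control on finite maxima; this is exactly where the upward-directedness of $\mathcal{H}$, supplied essentially for free by decomposability, is indispensable. Once the essential supremum is in hand, the remainder is a single application of monotone convergence, mirroring the domination argument used in (i) $\Rightarrow$ (ii).
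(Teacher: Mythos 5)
Your proof is correct. Note that the paper itself gives no argument for this lemma, citing it directly from [Michta, 2015, Theorem 2.2]; your write-up supplies essentially that standard proof: the easy direction by dominating finite partition combinations with the envelope $h$, and the converse by observing that the pointwise norms of elements of $\dec_{\mathcal{G}}K$ form an upward-directed family (via the argmax partition, which is $\mathcal{G}$-measurable since the $\|f_{j}\|$ are), then extracting a countable increasing subfamily realizing the $M\times\PP$-essential supremum and passing to the limit by monotone convergence under the uniform bound $C$. All the ingredients you flag as delicate (upward-directedness, $\upsigma$-finiteness of $M\times\PP$ for the essential-supremum theorem, and the fact that each running maximum is itself the norm of an element of $\dec_{\mathcal{G}}K$ and hence controlled by $C$) are handled correctly, so there is no gap.
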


\medskip

\section{Set-valued convoluted stochastic integrals}\label{sec:3}

In this paper, as mentioned before, we will consider four types of set-valued convoluted stochastic integrals, which are indexed by $q\in\{1,2,3,4\}$ for notational convenience. In what follows the spaces $\mathds{L}^{p}(D_{q}\times\Omega,\mathcal{P}(\mathbb{F}),M_{q}\times\PP;E_{q})$, $q\in\{1,2,3,4\}$, where $D_{1}=D_{2}=[0,T]$, $D_{3}=D_{4}=[0,T]\times(\mathds{R}^{d}\setminus\{\0\})$, $M_{1}=M_{2}=\mathrm{Leb}_{[0,T]}$, $M_{3}=\mathrm{Leb}_{[0,T]}\times\nu\upharpoonright_{\{\|z\|\geq1\}}$, $M_{4}=\mathrm{Leb}_{[0,T]}\times\nu\upharpoonright_{\{0<\|z\|<1\}}$, $E_{1}=E_{3}=E_{4}=\mathds{R}^{d}$, $E_{2}=\mathds{R}^{d\times m}$ will be considered specifically, and $\breve{\mathds{L}}^{p}$ denotes the corresponding subspaces such that conditions (\ref{2.3}) are satisfied, which are closed in $\mathds{L}^{p}$ for $p\geq1$ because $[0,T]$ is compact. Similarly, $\breve{S}^{p}_{\mathcal{P}(\mathbb{F})}$ is used to denote the collection of predictable selectors in the $\breve{\mathds{L}}^{p}$ subspaces. We start with the following definition of the corresponding integral functionals.\footnote{In this paper, when the integrands are set-valued (or multifunctions), for notational consistency we stick to writing nonempty subsets $K$ of $\mathds{L}^{p}$ spaces, thanks to the one-to-one correspondence given by Theorem \ref{thm:1}.}

\begin{definition}\label{def:2}
Let the time points $0\leq t_{0}<t\leq T$ be fixed and let $K$ be a nonempty subset of the $\breve{\mathds{L}}^{p}(D_{q}\times\Omega,\mathcal{P}(\mathbb{F}),M_{q}\times\PP;E_{q})$ space, where the number $p\geq1$ satisfies that $p=2$ for $q\in\{2,4\}$. Then, for a suitable (Volterra-type) kernel $g$ (Definition \ref{def:1}), the ($g$-)convoluted Aumann integral functional, set-valued convoluted It\^{o} integral functional, set-valued convoluted integral functional with respect to the Poisson random measure, and the set-valued convoluted integral functional with respect to the compensated Poisson random measure, of $K$, are defined, in the order of $q$, to be the set-valued mappings
\begin{equation}\label{2.4}
  \mathcal{S}\big(\breve{\mathds{L}}^{p}(D_{q}\times\Omega,\mathcal{P}(\mathbb{F}),M_{q}\times\PP;E_{q})\big)\ni K\mapsto I^{(q,g)}_{t_{0},t}(K):=\big\{I^{(q,g)}_{t_{0},t}(h):h\in K\big\}\in\mathcal{S}(\mathds{L}^{p}(\Omega,\mathscr{F}_{t},\PP;\mathds{R}^{d})),
\end{equation}
where
\begin{equation}\label{2.5}
  I^{(q,g)}_{t_{0},t}(h)=
  \begin{cases}
    \displaystyle \int^{t}_{t_{0}}g(t,s)h(s-)\dd s,\quad&\text{if }q=1,\\
    \displaystyle \int^{t}_{t_{0}}g(t,s)h(s-)\dd W_{s},\quad&\text{if }q=2,\\
    \displaystyle \int^{t}_{t_{0}}\int_{\|z\|\geq1}g(t,s)h(s-,z)N(\dd s,\dd z),\quad&\text{if }q=3,\\
    \displaystyle \int^{t}_{t_{0}}\int_{0<\|z\|<1}g(t,s)h(s-,z)\tilde{N}(\dd s,\dd z),\quad&\text{if }q=4.
  \end{cases}
\end{equation}
\end{definition}

Since the kernel $g$ is deterministic, convolution does not affect the properties of the integral functionals at fixed time, as long as (\ref{2.3}) is satisfied with $g$. The following theorem summarizes some basic properties of the convoluted integral functionals defined in (\ref{2.4}) and (\ref{2.5}).

\begin{theorem}\label{thm:2}
Let $K$ and $H$ be nonempty subsets of the $\breve{\mathds{L}}^{p}(D_{q}\times\Omega,\mathcal{P}(\mathbb{F}),M_{q}\times\PP;E_{q})$ space, $q\in\{1,2,3,4\}$. Then the following assertions hold for any $q\in\{1,2,3,4\}$, any given suitable kernel $g$, and any fixed $0\leq t_{0}\leq t\leq T$: \medskip\\
(i) $I^{(q,g)}_{t_{0},t}(\cl_{\mathds{L}^{p}}K)=\cl_{\mathds{L}^{p}}I^{(q,g)}_{t_{0},t}(K)$; \medskip\\
(ii) $I^{(q,g)}_{t_{0},t}(\overline{\co}_{\mathds{L}^{p}}K)=\overline{\co}_{\mathds{L}^{p}}I^{(q,g)}_{t_{0},t}(K)$; \medskip\\
(iii) $I^{(q,g)}_{t_{0},t}(K+H)=I^{(q,g)}_{t_{0},t}(K)+I^{(q,g)}_{t_{0},t}(H)$, where the sum is in the sense of Minkowski; \medskip\\
(iv) there exists a sequence $\{f_{n}:n\in\mathds{N}_{++}\}\subset K$ such that $\cl_{\mathds{L}^{p}}I^{(q,g)}_{t_{0},t}(K)=\cl_{\mathds{L}^{p}}\big\{I^{(q,g)}_{t_{0},t}(f_{n}):n\in\mathds{N}_{++}\big\}$.
\end{theorem}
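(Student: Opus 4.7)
The plan is to derive all four assertions by viewing each $I^{(q,g)}_{t_0,t}$ as a bounded linear map between the source $\breve{\mathds{L}}^{p}$ space and the target $\mathds{L}^{p}(\Omega,\mathscr{F}_{t},\PP;\mathds{R}^{d})$, and then to combine this continuity and linearity with the separability of the underlying function spaces.

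The first step is to verify continuity of each operator. For $q=2$ this is the classical It\^{o} isometry with the deterministic factor $g(t,s)$ pulled into the integrand, giving $\|I^{(2,g)}_{t_{0},t}(h-f)\|_{\mathds{L}^{2}(\Omega)}^{2}=\int_{t_{0}}^{t}g^{2}(t,s)\,\E\|h(s)-f(s)\|_{\rm F}^{2}\,\dd s$. For $q\in\{3,4\}$ it follows from the analogous identities for Poisson random measures (raw against $\nu\upharpoonright_{\{\|z\|\geq 1\}}$ and compensated against $\nu\upharpoonright_{\{0<\|z\|<1\}}$ respectively), again absorbing $g(t,\cdot)\in\mathds{L}^{2}$ into the integrand norm. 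For $q=1$, Minkowski's integral inequality combined with Cauchy--Schwarz against $\|g(t,\cdot)\|_{\mathds{L}^{2}}$ and condition (\ref{2.3}) yields $\|I^{(1,g)}_{t_{0},t}(h)\|_{\mathds{L}^{p}(\Omega)}\leq\sqrt{t-t_{0}}\,\|g(t,\cdot)\|_{\mathds{L}^{2}}\sup_{s}\|h(s)\|_{\mathds{L}^{p}(\Omega)}$.

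Once continuity is in place, (iii) and (iv) are straightforward. For (iii), linearity of $I^{(q,g)}_{t_{0},t}$ in the integrand gives $I^{(q,g)}_{t_{0},t}(k+h)=I^{(q,g)}_{t_{0},t}(k)+I^{(q,g)}_{t_{0},t}(h)$ for every $k\in K$ and $h\in H$, yielding both directions of the Minkowski sum identity. For (iv), separability of the $\breve{\mathds{L}}^{p}$ spaces (inherited from the separability of $\mathcal{P}(\mathbb{F})$ and of $E_{q}$, in analogy with Lemma \ref{lem:1}) supplies a countable dense sequence $\{f_{n}\}\subset K$; continuity of $I^{(q,g)}_{t_{0},t}$ then makes $\{I^{(q,g)}_{t_{0},t}(f_{n})\}$ dense in $I^{(q,g)}_{t_{0},t}(K)$, so the two $\mathds{L}^{p}$-closures coincide.

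For (i), continuity immediately gives the inclusion $I^{(q,g)}_{t_{0},t}(\cl_{\mathds{L}^{p}}K)\subseteq\cl_{\mathds{L}^{p}}I^{(q,g)}_{t_{0},t}(K)$. The reverse inclusion is the main technical hurdle: given $y=\lim_{n}I^{(q,g)}_{t_{0},t}(f_{n})$ with $f_{n}\in K$, one must produce $f^{*}\in\cl_{\mathds{L}^{p}}K$ with $I^{(q,g)}_{t_{0},t}(f^{*})=y$. For $q\in\{2,3,4\}$ the relevant isometry forces $\{g(t,\cdot)f_{n}\}$ to be Cauchy in the appropriate integrand $\mathds{L}^{2}$ space, and since Definition \ref{def:1}(ii) rules out $g(t,\cdot)\equiv 0$, a suitable modification supported on the effective support of $g(t,\cdot)$ provides the desired limit $f^{*}$; the case $q=1$ is handled analogously via the Minkowski-type bound. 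Finally, (ii) follows by combining the elementary linearity identity $I^{(q,g)}_{t_{0},t}(\co K)=\co I^{(q,g)}_{t_{0},t}(K)$ with (i), which commutes the outer closure through the operator.
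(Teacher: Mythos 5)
Your proposal follows the paper's proof essentially step for step: continuity/isometry of each $I^{(q,g)}_{t_{0},t}$, linearity for (ii) and (iii), separability plus continuity of the operator for (iv), and a Cauchy-sequence/completeness argument for the reverse inclusion in (i). Parts (iii), (iv), and the inclusion $I^{(q,g)}_{t_{0},t}(\cl_{\mathds{L}^{p}}K)\subseteq\cl_{\mathds{L}^{p}}I^{(q,g)}_{t_{0},t}(K)$ are complete as you state them.

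The one place where your argument does not close -- and it is exactly the place where the paper's own proof makes the same unsupported leap -- is the reverse inclusion in (i). The isometry only yields that $\{f_{n}\}$ is Cauchy with respect to the $g^{2}(t,\cdot)$-weighted seminorm on $[t_{0},t)$, whereas $\cl_{\mathds{L}^{p}}K$ is taken in the unweighted norm of $\breve{\mathds{L}}^{p}(D_{q}\times\Omega,\mathcal{P}(\mathbb{F}),M_{q}\times\PP;E_{q})$ over all of $[0,T]$. Your proposed fix, a ``modification supported on the effective support of $g(t,\cdot)$,'' only addresses the case where $g(t,\cdot)$ vanishes exactly on part of $[t_{0},t)$; it does not address decay. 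Definition \ref{def:1} permits, for instance, the Riemann--Liouville kernel with $\beta>2$, for which $g(t,s)\to0$ as $s\nearrow t$; taking $f_{n}=\phi+n\psi_{n}$ with $\psi_{n}$ deterministic, bounded, orthonormal and supported in $[t-1/n,t-1/(n+1))$ gives $I^{(2,g)}_{0,t}(f_{n})\to I^{(2,g)}_{0,t}(\phi)$ in $\mathds{L}^{2}(\Omega)$, while $K=\{f_{n}\}$ is a closed, uniformly discrete subset of the $\breve{\mathds{L}}^{2}$ space whose image omits $I^{(2,g)}_{0,t}(\phi)$. So the passage from weighted to unweighted Cauchyness genuinely fails without a positive lower bound on $|g(t,\cdot)|$ over $[t_{0},t)$ (and without controlling the $f_{n}$ off $[t_{0},t)$), or without extra structure on $K$ (e.g., closed, bounded and convex, in which case a weak-compactness argument produces the required preimage). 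The paper asserts the identical implication verbatim (``this implies that $\{f_{n}\}$ is a Cauchy sequence \dots because $g(t,\cdot)\in\mathds{L}^{2}$''), so your write-up is faithful to the source; but as written neither version establishes the reverse inclusion for arbitrary nonempty $K$ and arbitrary suitable kernels, and assertion (ii) inherits the same issue through its reliance on (i).
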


\begin{proof}
In the cases $q=1,2$ the results can be readily gleaned from [Kisielewicz, 2020b, Chapter IV \text{Thm.} 4.2.1 and Chapter V \text{Lemma} 5.1.1] \cite{K8}, in light of (\ref{2.3}).

For the rest two cases we only consider $q=4$, while $q=3$ is similar and easier. Note $p=2$ in particular. For assertion (i), it is clear that $I^{(4,g)}_{t_{0},t}(\cl_{\mathds{L}^{2}}K)\subset\cl_{\mathds{L}^{2}}I^{(4,g)}_{t_{0},t}(K)$ because the integral functional is a (continuous) linear isometry. On the other hand, any sequence $\{f_{n}:n\in\mathds{N}_{++}\}\subset K$ such that $\E\big\|I^{4,g}_{t_{0},t}(f_{n})-\eta\big\|^{2}\rightarrow0$ for an arbitrary $\eta\in\cl_{\mathds{L}^{2}}I^{(4,g)}_{t_{0},t}(K)$ satisfies
\begin{equation*}
  \E\big\|I^{(4,g)}_{t_{0},t}(f_{m})-I^{(4,g)}_{t_{0},t}(f_{n})\big\|^{2}=\E\int^{t}_{t_{0}} \int_{0<\|z\|<1}g^{2}(t,s)\|f_{m}(s-,z)-f_{n}(s-,z)\|^{2}\nu(\dd z)\dd s\rightarrow0,\;\text{as }m,n\rightarrow\infty,
\end{equation*}
because $g(t,\cdot)\in\mathds{L}^{2}([0,t);\mathds{R})$. This implies that $\{f_{n}\}$ is a Cauchy sequence in the $\breve{\mathds{L}}^{2}([0,T]\times(\mathds{R}^{d}\setminus\{\0\})\times\Omega,\mathcal{P}(\mathbb{F}), \mathrm{Leb}_{[0,T]}\times\nu\upharpoonright_{\{0<\|z\|<1\}}\times\PP;\mathds{R}^{d})$ space, which by completeness gives that there exists $f\in\breve{\mathds{L}}^{2}([0,T]\times(\mathds{R}^{d}\setminus\{\0\})\times\Omega,\mathcal{P}(\mathbb{F}),\mathrm{Leb}_{[0,T]}\times \nu\upharpoonright_{\{0<\|z\|<1\}}\times\PP;\mathds{R}^{d})$ such that
\begin{equation*}
  \E\int^{t}_{t_{0}}\int_{0<\|z\|<1}g^{2}(t,s)\|f(s-,z)-f_{n}(s-,z)\|^{2}\nu(\dd z)\dd s\rightarrow0,\quad\text{as }n\rightarrow\infty,
\end{equation*}
so that $f\in\cl_{\mathds{L}^{2}}K$ and $\E\big\|I^{(4,g)}_{t_{0},t}(f_{m})-I^{(4,g)}_{t_{0},t}(f)\big\|^{2}$ tends to 0. Therefore, $I^{(4,g)}_{t_{0},t}(f)\in I^{(4,g)}_{t_{0},t}(\cl_{\mathds{L}^{2}}K)$, and since $\eta$ is arbitrary it follows that $I^{(4,g)}_{t_{0},t}(\cl_{\mathds{L}^{2}}K)\supset\cl_{\mathds{L}^{2}}I^{(4,g)}_{t_{0},t}(K)$.

Then, assertions (ii) and (iii) easily follow by the linearity of the functional $I^{(4,g)}_{t_{0},t}$ on the basis of assertion (i).

For assertion (iv), since the induced space of $(\Omega,\mathcal{F},\PP)$ is separable it is understood that the $\mathds{L}^{2}([0,T]\times(\mathds{R}\setminus\{\0\})\times\Omega,\mathcal{P}(\mathbb{F}),\mathrm{Leb}_{[0,T]}\times\nu\upharpoonright_{\{0<\|z\|<1\}} \times\PP;\mathds{R}^{d})$ (Banach) space is separable. Since the corresponding $\breve{\mathds{L}}^{2}$ subspace is closed, it is also separable. Then, we observe that there exists a sequence $\{f_{n}:n\in\mathds{N}_{++}\}\subset K$ such that
\begin{equation*}
  K=K\cap\cl_{\mathds{L}^{2}}\{f_{n}:n\in\mathds{N}_{++}\}.
\end{equation*}
Taking closure on both sides, together with the fact that $\cl_{\mathds{L}^{2}}\{f_{n}:n\in\mathds{N}_{++}\}\subset\cl_{\mathds{L}^{2}}K$, establishes the equivalence $\cl_{\mathds{L}^{2}}K=\cl_{\mathds{L}^{2}}\{f_{n}:n\in\mathds{N}_{++}\}$, which, along with assertion (i), proves the result.
\end{proof}

We remark that working with the $\breve{\mathds{L}}^{p}$ subspaces is essential for convoluted stochastic integrals. This allows one to choose any suitable Volterra-type kernel without having to deal with reproduced kernel Hilbert spaces (see, e.g., [Hult, 2003] \cite{H}) that are not necessarily $\mathds{L}^{p}$ spaces.

\medskip

\section{Decomposability and integrability}\label{sec:4}

In this section we give the formal definition of the four types of set-valued convoluted stochastic integrals, after studying the decomposability of the corresponding functionals. Then, we present results on the integrable boundedness properties of these integrals, on the basis of Lemma \ref{lem:2}. The next lemma shows the general non-decomposability of the integral functionals.

\begin{lemma}\label{lem:3}
Let $0\leq t_{0}<t\leq T$ be fixed and suppose that $K\subset\breve{\mathds{L}}^{p}(D_{q}\times\Omega,\mathcal{P}(\mathbb{F}),M_{q}\times\PP;E_{q})$, for $q\in\{1,2,3,4\}$, is nonempty and decomposable. Then, for a given suitable kernel $g$ and each $q$, $I^{(q,g)}_{t_{0},t}(K)$ is decomposable (with respect to $\mathscr{F}_{t}$) if and only if $\card K=1$.
\end{lemma}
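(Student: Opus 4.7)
The ``if'' implication is immediate: if $K=\{h\}$, then $I^{(q,g)}_{t_{0},t}(K)$ is a singleton and is vacuously $\mathscr{F}_{t}$-decomposable.

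For the converse the plan is contrapositive: assume $\card K \geq 2$ and produce an $\mathscr{F}_{t}$-decomposition that escapes $I^{(q,g)}_{t_{0},t}(K)$. Pick distinct $f,l \in K$ and set $\eta := I^{(q,g)}_{t_{0},t}(f)$ and $\zeta := I^{(q,g)}_{t_{0},t}(l)$. The first step is to reduce to the case $\eta \neq \zeta$ in $\mathds{L}^{p}$. For $q \in \{2,4\}$, the It\^{o} (resp.\ compensated-Poisson) isometry renders the functional injective on $\breve{\mathds{L}}^{2}$, so this is automatic. For $q \in \{1, 3\}$, if $\eta = \zeta$, then the $\mathcal{P}(\mathbb{F})$-decomposability of $K$ provides $\mathds{1}_{B}f+\mathds{1}_{B^{c}}l\in K$ for every predictable $B\subset(t_{0},t]\times\Omega$; the image is $\zeta + \int_{t_{0}}^{t} g(t,s)\mathds{1}_{B}(s,\cdot)(f-l)(s-,\cdot)\,\dd s$ (and the analogous compound-Poisson sum for $q=3$), which cannot agree with $\zeta$ for every such $B$ because $g(t,\cdot)\not\equiv 0$ by Definition \ref{def:1}(ii). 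Relabelling if necessary, I may therefore assume $\eta\neq\zeta$.

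Assuming $I^{(q,g)}_{t_{0},t}(K)$ is $\mathscr{F}_{t}$-decomposable, I apply this with the event $A=\{\eta>\zeta\}\in\mathscr{F}_{t}$ (coordinate-wise when $d>1$) to produce $h,h' \in K$ satisfying $I^{(q,g)}_{t_{0},t}(h) = \max\{\eta,\zeta\}$ and $I^{(q,g)}_{t_{0},t}(h') = \min\{\eta,\zeta\}$; linearity then gives $I^{(q,g)}_{t_{0},t}(h-h') = |\eta - \zeta|$, which is almost surely non-negative and strictly positive on a set of positive probability. For $q \in \{2, 4\}$ the left-hand side is a centered stochastic integral of vanishing expectation, so non-negativity forces $|\eta - \zeta| = 0$ almost surely, contradicting $\eta \neq \zeta$ and closing these cases.

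The hard part is $q \in \{1, 3\}$, where the centeredness shortcut is unavailable and the contradiction must come from a structural mismatch between the predictable $\upsigma$-algebra that measures the integrand $h$ and the broader $\mathscr{F}_{t}$ that measures the switching indicator $\mathds{1}_{A}$. The intended strategy is to choose $A$ so that $\mathds{1}_{A}$ carries post-$t_{0}$ information not $\mathscr{F}_{s-}$-measurable for any $s\leq t$ -- for instance $A = \{W_{t} - W_{t_{0}} > 0\}$ for $q=1$, or an event selected by the sign of a late-jump contribution in the compound Poisson sum for $q=3$ -- and then to argue that $\mathds{1}_{A}\eta + \mathds{1}_{A^{c}}\zeta$ cannot be realized as the pathwise Lebesgue (resp.\ compound-Poisson) integral $I^{(q,g)}_{t_{0},t}(h)$ of any predictable $h$. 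I expect this step to require the most technical effort, in the spirit of the predictable-versus-adapted obstructions exploited in \cite{M5,K8} for the It\^{o} case.
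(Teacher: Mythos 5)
Your handling of $q\in\{2,4\}$ is correct and is essentially the paper's own centeredness argument in a slightly different form: the paper takes expectations of arbitrary decompositions $\mathds{1}_{A(t)}f+\mathds{1}_{A^{\complement}(t)}l$ and uses that every element of $I^{(q,g)}_{t_{0},t}(K)$ has zero mean to force $f=l$, whereas you pick the specific event $A=\{\eta>\zeta\}$ (coordinatewise) so that the decomposed values differ by $|\eta-\zeta|$, which is simultaneously nonnegative and the value of a centered stochastic integral, hence zero; both routes then invoke the isometry to conclude $\card K=1$. That part can stand as written.

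The genuine gap is in the cases $q\in\{1,3\}$, which you flag as ``the hard part'' and then do not prove: what you offer is a plan, and the entire content of the necessity direction for these two cases lives in the clause you leave unargued, namely that $\mathds{1}_{A}\eta+\mathds{1}_{A^{\complement}}\zeta$ cannot equal $I^{(q,g)}_{t_{0},t}(h)$ for \emph{any} $h\in K$ --- ruling out the obvious candidate $\mathds{1}_{A}f+\mathds{1}_{A^{\complement}}l$ is not enough, since some other predictable element of $K$ could a priori reproduce the same value. The paper's route is to fix $t_{1}\in[t_{0},t)$ and choose $A(t)\in\mathscr{F}_{t-}\setminus\upsigma\big(\bigcup_{s\in[t_{0},t_{1}]}\mathscr{F}_{s}\big)$, i.e.\ an event genuinely revealed only after time $t_{1}$, and to exploit that the switched integrand $\phi\mathds{1}_{[t_{0},t]\times A(t)}+\gamma\mathds{1}_{[t_{0},t]\times A^{\complement}(t)}$ fails to be $\mathbb{F}$-predictable already on $[t_{0},t_{1}]$; the exclusion from $I^{(q,g)}_{t_{0},t}(K)$ rests on this mismatch between the information carried by $\mathds{1}_{A(t)}$ at early times and the predictability of every admissible integrand. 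Your proposed event $\{W_{t}-W_{t_{0}}>0\}$ does not have this ``late revelation'' property relative to any $t_{1}<t$ (and for $q=3$ it is independent of the Poisson integrand altogether), so even the starting point of your sketch is weaker than the paper's choice. Until you supply an actual argument that no predictable $h\in K$ can realize the switched value --- for instance by adopting the paper's choice of $A(t)$ and making the exclusion explicit on the subinterval $[t_{0},t_{1}]$ --- the lemma is unproved for $q\in\{1,3\}$.
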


\begin{proof}
First, by non-emptiness and the definition of decomposability, it is easily justifiable that $\card K\in\{1,2^{\mathfrak{c}}\}$, $\mathfrak{c}$ being the continuum cardinality. Clearly, if $\card K=1$ then all four types of integrals under consideration are single-valued, so that $\dec I^{(q,g)}_{t_{0},t}(K)=I^{(q,g)}_{t_{0},t}(K)$ for each $q$, which takes care of the sufficiency parts.

For $q=1,3$, if $\card K>1$, we can choose $A(t)\in\mathscr{F}_{t-}\setminus\upsigma\big(\bigcup_{s\in[t_{0},t_{1}]}\mathscr{F}_{s}\big)$ for some $t_{1}\in[t_{0},t)$, and two $\mathbb{F}$-predictable processes $\phi,\gamma\in K$ with $\phi\not\equiv\0$. Then as $I^{(q,g)}_{t_{0},t}$ is a linear isometry one has for $q=1$ that
\begin{equation*}
  I^{(1,g)}_{t_{0},t}\big(\phi\mathds{1}_{[t_{0},t]\times A(t)}+\gamma\mathds{1}_{[t_{0},t]\times A^{\complement}(t)}\big)=\mathds{1}_{A(t)}I^{(1,g)}_{t_{0},t}(\phi)+\mathds{1}_{A^{\complement}(t)}I^{(1,g)}_{t_{0},t}(\gamma).
\end{equation*}
The right-hand side cannot belong to $I^{(1,g)}_{t_{0},t}(K)$ since the process $\phi\mathds{1}_{[t_{0},t]\times A(t)}+\gamma\mathds{1}_{[t_{0},t]\times A^{\complement}(t)}$ defined over $[t_{0},t]$ fails to be $\mathbb{F}$-predictable when restricted to the subinterval $[t_{0},t_{1}]$. In the same spirit, for $q=3$ we establish that
\begin{equation*}
  I^{(3,g)}_{t_{0},t}(\phi\mathds{1}_{[t_{0},t]\times A(t)}+\gamma\mathds{1}_{[t_{0},t]\times A^{\complement}(t)})=\mathds{1}_{A(t)}I^{(3,g)}_{t_{0},t}(\phi)+\mathds{1}_{A^{\complement}(t)}I^{(3,g)}_{t_{0},t}(\gamma),
\end{equation*}
which does not belong to $I^{(3,g)}_{t_{0},t}(K)$ either. Therefore, $I^{(q,g)}_{t_{0},t}(K)$ cannot be decomposable for $q=1,3$ unless $\card K=1$.

For $q=2,4$, a different argument is used, exploiting the centeredness property. Suppose $\dec_{\mathscr{F}_{t}}I^{(q,g)}_{t_{0},t}(K)=I^{(q,g)}_{t_{0},t}(K)$, so that for any $A(t)\in\mathscr{F}_{t-}$ and any $f,l\in I^{(q,g)}_{t_{0},t}(K)$, $\mathds{1}_{A(t)}f+\mathds{1}_{A^{\complement}(t)}l\in I^{(q,g)}_{t_{0},t}(K)$. On the other hand, since $g$ is square-integrable, it is clear that $\int^{t}_{t_{0}}g(t,s)f(s-)\dd W_{s}$ and $\int^{t}_{t_{0}}\int_{0<\|z\|<1}g(t,s)f(s-,z)\tilde{N}(\dd s,\dd z)$ are both square-integrable and centered random variables, and hence taking expectation yields
\begin{align*}
  \E\big(\mathds{1}_{A(t)}f+\mathds{1}_{A^{\complement}(t)}l\big)&=\E\big(\mathds{1}_{A(t)}(f-l)+l\big) \\
  &\in\E I^{(q,g)}_{t_{0},t}(K):=\cl\big\{\E\ell:\ell\in I^{(q,g)}_{t_{0},t}(K)\big\}=\cl\{\0\}=\{\0\}.
\end{align*}
Taking $f=I^{(q,g)}_{t_{0},t}(\phi)$ and $l=I^{(q,g)}_{t_{0},t}(\gamma)$ for $\phi,\gamma\in K$, then $I^{(q,g)}_{t_{0},t}(\phi-\gamma)=I^{(q,g)}_{t_{0},t}(\phi)-I^{(q,g)}_{t_{0},t}(\gamma)=\0$ by the arbitrariness of $A(t)\in\mathscr{F}_{t}$. It follows by the L\'{e}vy--It\^{o} isometry that $\E\big\|I^{(2,g)}_{t_{0},t}(\phi-\gamma)\big\|^{2}=\E\int^{t}_{t_{0}}g^{2}(t,s)\|\phi(s-)-\gamma(s-)\|^{2}\dd s=0$ and $\E\big\|I^{(4,g)}_{t_{0},t}(\phi-\gamma)\big\|^{2}=\E\int^{t}_{t_{0}}\int_{0<\|z\|<1}g^{2}(t,s)\|\phi(s-,z)-\gamma(s-,z)\|^{2}$ $\nu(\dd z)\dd s=0$, implying that $\phi=\gamma$, and so $\card K=1$.
\end{proof}

Lemma \ref{lem:3} basically means that the decomposability of all four types of set-valued integral functionals is only guaranteed when $K$ is a singleton. This is to be expected based on the theory of set-valued stochastic integrals in the absence of convolution and jumps (mentioning [Kisielewicz, 2012, Theorem 2.1] \cite{K5} and [Kisielewicz, 2020b, Chapter V Lemma 5.1.1] \cite{K8}). This result also strengthens the following definition.

\begin{definition}\label{def:3}
In the setting of Definition \ref{def:2}, for a general subset $K\in\breve{\mathds{L}}^{p}(D_{q}\times\Omega,\mathcal{P}(\mathbb{F}),M_{q}\times\PP;E_{q})$ and a suitable kernel $g$, the ($g$-)convoluted Aumann stochastic integral, set-valued convoluted It\^{o} integral, set-valued convoluted integral with respect to the Poisson random measure, and the set-valued convoluted integral with respect to the compensated Poisson random measure, of $K$, are defined, in the order of $q$, as the $\mathscr{F}_{t}$-measurable random variable $X_{q}$ such that
\begin{equation*}
  S^{p}_{\mathscr{F}_{t}}(X_{q})=\overline{\dec}_{\mathscr{F}_{t}}I^{(q,g)}_{t_{0},t}(K),\quad q\in\{1,2,3,4\}.
\end{equation*}
\end{definition}

In Definition \ref{def:3}, the random variable $X_{q}$ will be respectively expressed as $\int^{t}_{t_{0}}g(t,s)K(s-)\dd s$, $\int^{t}_{t_{0}}g(t,s)K(s-)\dd W_{s}$, $\int^{t}_{t_{0}}\int_{\|z\|\geq1}g(t,s)K(s-,z)N(\dd s,\dd z)$, and $\int^{t}_{t_{0}}\int_{0<\|z\|<1}g(t,s)K(s-,z)\tilde{N}(\dd s,\dd z)$, in the order of $q$. Taking the left time limit in $K$ is to be understood as doing so for each element.

We remark that, with square-integrable kernels and the conditions in (\ref{2.3}), the integrands of the set-valued convoluted integrals are guaranteed to be in the corresponding $\mathds{L}^{p}(D_{q}\times\Omega,\mathcal{P}(\mathbb{F}),M_{q}\times\PP;E_{q})$ spaces. With this in mind, we can state the following theorem, by consulting [Kisielewicz and Michta, 2017, Theorem 2.8] \cite{KM} for $q=1$, [Michta, 2015, Theorem 3.15] \cite{M5} and [Kisielewicz, 2020a, Theorem 11] \cite{K7} for $q=2$, and [Zhang et al., 2013, Theorem 3.3] \cite{ZMO} for $q=3$. Note that the L\'{e}vy measure is restricted to $\{z\in\mathds{R}^{d}:\|z\|\geq1\}$ in this setting, and is essentially a finite measure.

\begin{theorem}\label{thm:3}
Let $0\leq t_{0}<t\leq T$ be fixed and let $K$ be a nonempty closed bounded decomposable subset of the $\breve{\mathds{L}}^{2}(D_{q}\times\Omega,\mathcal{P}(\mathbb{F}),M_{q}\times\PP;E_{q})$ space, for $q\in\{1,2,3\}$. Then, for a suitable kernel $g$, $\overline{\dec}_{\mathscr{F}_{t}}I^{(q,g)}_{t_{0},t}(K)$ is a bounded subset of the $\mathds{L}^{p}(\Omega,\mathscr{F}_{t},\PP;\mathds{R}^{d})$ space if, and only if, either $q=1,3$ or $\card K=1$.
\end{theorem}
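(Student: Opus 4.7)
The plan is to apply Lemma~\ref{lem:2}, which equates boundedness of $\overline{\dec}_{\mathscr{F}_{t}}I^{(q,g)}_{t_{0},t}(K)$ in $\mathds{L}^{p}$ with $p$-integrable boundedness of $I^{(q,g)}_{t_{0},t}(K)$ viewed as a subset of $\mathds{L}^{p}(\Omega,\mathscr{F}_{t},\PP;\mathds{R}^{d})$. Since $K$ is assumed closed, decomposable, and $\mathds{L}^{2}$-bounded, Lemma~\ref{lem:2} applied to $K$ itself supplies a function $H\in\breve{\mathds{L}}^{2}$ with $\|f\|\le H$ a.e.\ for every $f\in K$; this will serve as the pointwise dominator for the two ``always bounded'' cases. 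The case $\card K=1$ is instantaneous for every $q\in\{1,2,3\}$, since $I^{(q,g)}_{t_{0},t}(K)$ is then a singleton in $\mathds{L}^{p}$.

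For $q=1$, the pathwise estimate
\[
  \bigl\|I^{(1,g)}_{t_{0},t}(f)(\omega)\bigr\|\le\int_{t_{0}}^{t}|g(t,s)|H(s-,\omega)\,\dd s=:\tilde H(\omega),
\]
together with Jensen's inequality against the finite measure $|g(t,\cdot)|\,\dd s$ (recall that $g(t,\cdot)\in\mathds{L}^{2}([0,t);\mathds{R})$ on a bounded interval, hence also lies in $\mathds{L}^{1}$) and $H\in\breve{\mathds{L}}^{2}$, yields $\tilde H\in\mathds{L}^{p}(\Omega)$. For $q=3$, the analogous bound
\[
  \bigl\|I^{(3,g)}_{t_{0},t}(f)(\omega)\bigr\|\le\int_{t_{0}}^{t}\!\int_{\|z\|\ge1}|g(t,s)|H(s-,z,\omega)\,N(\dd s,\dd z)=:\tilde H'(\omega)
\]
is finite a.s.\ because $\nu\upharpoonright_{\{\|z\|\ge1\}}$ is a finite measure, so the integral reduces pathwise to a finite sum over jump times; a Campbell-type moment computation, again combined with the finiteness of the L\'{e}vy measure on $\{\|z\|\ge1\}$, gives $\tilde H'\in\mathds{L}^{p}(\Omega)$. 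In both cases $I^{(q,g)}_{t_{0},t}(K)$ is $p$-integrably bounded and Lemma~\ref{lem:2} closes the sufficiency direction. This parallels [Kisielewicz and Michta, 2017, Theorem~2.8] \cite{KM} for $q=1$ and [Zhang et al., 2013, Theorem~3.3] \cite{ZMO} for $q=3$, with the deterministic factor $g$ absorbed into the integrand.

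For the converse at $q=2$ with $\card K>1$, my aim is to show that $I^{(2,g)}_{t_{0},t}(K)$ is \emph{not} $2$-integrably bounded, so that Lemma~\ref{lem:2} delivers the desired unboundedness. Pick distinct $\phi,\gamma\in K$; by (iterated) decomposability, the uncountable family $\{\mathds{1}_{A}\phi+\mathds{1}_{A^{\complement}}\gamma:A\in\mathcal{P}(\mathbb{F})\}$ sits inside $K$, and linearity of the It\^{o} integral gives
\[
  I^{(2,g)}_{t_{0},t}(\mathds{1}_{A}\phi+\mathds{1}_{A^{\complement}}\gamma)=I^{(2,g)}_{t_{0},t}(\gamma)+\int_{t_{0}}^{t}\mathds{1}_{A}(s)\,\dd\tilde M_{s},
\]
where $\tilde M_{s}:=\int_{t_{0}}^{s}g(t,u)(\phi-\gamma)(u-)\dd W_{u}$ is a non-constant continuous martingale (non-constancy follows from $\phi\ne\gamma$ in $\breve{\mathds{L}}^{2}$ together with $g^{2}>0$ on a set of positive measure). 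I would then construct, following the blueprint of [Michta, 2015, Theorem~3.15] \cite{M5} and [Kisielewicz, 2020a, Theorem~11] \cite{K7}, a countable sequence $\{A_{n}\}\subset\mathcal{P}(\mathbb{F})$ for which $\E\sup_{n}\bigl|\int_{t_{0}}^{t}\mathds{1}_{A_{n}}\dd\tilde M\bigr|^{2}=\infty$, exploiting the unbounded total variation of $\tilde M$ via refined dyadic stoppings.

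The main obstacle is precisely this last step: fabricating predictable indicators whose individual stochastic integrals remain $\mathds{L}^{2}$-bounded yet whose pointwise supremum explodes is the heart of the Michta--Kisielewicz argument. Its transfer to the present convoluted setting should be essentially routine once one observes that $g$ contributes only a deterministic rescaling of the quadratic variation of $\tilde M$ and is absent from the measurability structure (it does not enter the predictable $\upsigma$-algebra), so the core construction of the unbounded family $\{A_{n}\}$ proceeds unchanged after this verification.
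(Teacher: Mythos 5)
Your overall strategy coincides with the paper's: Theorem \ref{thm:3} is not proved in the text but is obtained by citing \cite{KM} for $q=1$, \cite{ZMO} for $q=3$, and \cite{M5}, \cite{K7} for $q=2$, together with the observation that a deterministic square-integrable kernel satisfying (\ref{2.3}) changes nothing at a fixed time. Your sufficiency half ($q=1,3$ and $\card K=1$) is carried out in more detail than the paper bothers to give, and it is sound: extracting a dominator $H$ for $K$ via Lemma \ref{lem:2}, then producing the pathwise dominators $\tilde H$, $\tilde H'$ and checking their $p$-integrability is exactly the computation the paper itself performs in Step 2 of the proof of Theorem \ref{thm:4} for the finite-variation jump case, and Lemma \ref{lem:2} converts integrable boundedness of $I^{(q,g)}_{t_0,t}(K)$ into boundedness of its closed decomposable hull.

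The one genuine gap is the step you yourself flag: the unboundedness of $\overline{\dec}_{\mathscr{F}_t}I^{(2,g)}_{t_0,t}(K)$ when $\card K>1$. Deferring it to \cite{M5} and \cite{K7} is legitimate (the paper does the same), but your description of how it would be filled points in the wrong direction. The mechanism is not the ``unbounded total variation of $\tilde M$'' nor an $\E\sup_n$ computation over refined dyadic stoppings of a single martingale; it is a lower bound on the expected maximum of a well-separated Gaussian vector. Concretely, one takes finitely many selectors $h_1,\dots,h_n\in K$ (built from $\phi,\gamma$ by decomposability so that the pairwise $\mathds{L}^2$-distances of the $I^{(2,g)}_{t_0,t}(h_j)$ stay bounded below), notes that $\sum_j\mathds{1}_{A_j}I^{(2,g)}_{t_0,t}(h_j)$ lies in the decomposable hull for any $\mathscr{F}_t$-partition $\{A_j\}$, uses the Hiai--Umegaki interchange to identify $\sup\{\E\|\eta\|^2:\eta\in\overline{\dec}_{\mathscr{F}_t}I^{(2,g)}_{t_0,t}(K)\}$ with $\E\max_j\|I^{(2,g)}_{t_0,t}(h_j)\|^2$ (up to the obvious inequality), and then invokes Slepian's lemma \cite{S1} to force growth of order $\log n$. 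This is precisely the blueprint the paper reproduces in Step 3 of the proof of Theorem \ref{thm:4} for the stable case (with Lemma \ref{lem:4} replacing Slepian), and the remark following that proof says explicitly that the Gaussian version of (\ref{4.10}) with $r=2$ is how \cite{M5} proves the $q=2$ unboundedness. A minor additional caveat for your reduction: non-constancy of $\tilde M$ requires $g(t,\cdot)(\phi-\gamma)\neq\0$ on a set of positive measure, not merely $\phi\neq\gamma$ and $g\not\equiv0$ separately; this is harmless under (\ref{2.3}) for the kernels of interest but should be said.
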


In light of Lemma \ref{lem:2}, Theorem \ref{thm:3} basically concludes that the convoluted Aumann stochastic integral and set-valued convoluted stochastic integral with respect to the Poisson random measure are both integrably bounded whenever the integrand is integrable (as a subset of the $\mathds{L}^{p}$ space). However, the set-valued convoluted It\^{o} integral is not square-integrably bounded\footnote{If one is only interested in integrable unboundedness (of order 1), the result remains negative, which is easily seen from the proof of [Michta, 2015, Theorem 3.5] \cite{M5}.} unless the integrand is a singleton.

For the integral with respect to the compensated measure ($q=4$), we present a novel result which implies its potential integrable unboundedness. The theorem is enough for the conclusions of our study but is unfortunately provable based on specific symmetry assumptions because of insufficient theory on the related inequalities for more general L\'{e}vy processes and is hence non-exhaustive. Nevertheless, we indeed conjecture that a more general result would hold for arbitrary $K\subset\mathds{L}^{2}(D_{4}\times\Omega,\mathcal{P}(\mathbb{F}),M_{4}\times\PP;E_{4})$, like proved in [Kisielewicz, 2020a] \cite{K7} for It\^{o}-type integrals, which is to be addressed in a future work.

Beforehand, we recall that a symmetric $\alpha$-stable random vector $Y$ with values in $\mathds{R}^{n}$ ($n\in\mathds{N}_{++}$) is one with characteristic function taking the form
\begin{equation}\label{4.1}
  \E e^{\ii u\cdot Y}=\exp\bigg(-\int_{S_{n}}|u\cdot x|^{\alpha}\varsigma(\dd x)\bigg),\quad u\in\mathds{R}^{n},
\end{equation}
for some finite measure $\varsigma$ on the $n$-dimensional unit sphere $S_{n}$, referred to as the spectral measure (see, e.g., [Samorodnitsky and Taqqu, 1993, Equation (1.3)] \cite{ST1}). The following lemma is a restatement of [Marcus, 1984, Theorem 2.2] \cite{M3}, which provides a lower bound on the maximal values of stable random vectors.

\begin{lemma}\label{lem:4}
Let $Y$ be a symmetric $\alpha$-stable ($1<\alpha<2$) random vector in $\mathds{L}^{1}(\Omega;\mathds{R}^{n})$, $n\geq2$. Then, for $r\in(0,\alpha)$ and $1/\alpha+1/\alpha'=1$, it holds that
\begin{equation*}
  \E\max_{j\in\mathds{N}\cap[1,n]}|Y_{j}|\geq c_{r,\alpha}\inf_{j<j'}(\E(Y_{j}-Y_{j'})^{r})^{1/r}\log^{1/\alpha'}n,
\end{equation*}
where $c_{r,\alpha}$ is a positive constant depending only on $r$ and $\alpha$.
\end{lemma}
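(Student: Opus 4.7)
The plan is to follow the classical Marcus--Pisier route to Sudakov-type minoration for symmetric $\alpha$-stable processes: reduce the claim to a lower bound on $\E\max_{j}|Y_{j}|$ in terms of the intrinsic stable-scale metric on the index set, and then invoke the known minoration for stable processes. First I would observe that by (\ref{4.1}), each increment $Y_{j}-Y_{j'}$ is itself symmetric $\alpha$-stable with scale parameter $\sigma_{j,j'}:=\bigl(\int_{S_{n}}|x_{j}-x_{j'}|^{\alpha}\varsigma(\dd x)\bigr)^{1/\alpha}$. Since all fractional moments of order $r\in(0,\alpha)$ of a symmetric $\alpha$-stable scalar are proportional to its scale, with a ratio $\kappa_{r,\alpha}$ depending only on $r$ and $\alpha$, one has $(\E|Y_{j}-Y_{j'}|^{r})^{1/r}=\kappa_{r,\alpha}\sigma_{j,j'}$. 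Thus, up to absorbing $\kappa_{r,\alpha}$ into the final constant, the statement reduces to the Sudakov-type bound
\begin{equation*}
  \E\max_{j\in\mathds{N}\cap[1,n]}|Y_{j}|\ \geq\ c_{\alpha}\,\bigl(\inf_{j<j'}\sigma_{j,j'}\bigr)\,\log^{1/\alpha'}n.
\end{equation*}

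Next I would bring in the LePage series representation $Y\stackrel{d}{=}C_{\alpha}\sum_{k\geq1}\epsilon_{k}\Gamma_{k}^{-1/\alpha}X_{k}$, where $\{\Gamma_{k}\}$ are the arrival times of a standard Poisson process on the positive half-line, $\{X_{k}\}$ are iid with law $\varsigma/\varsigma(S_{n})$, and $\{\epsilon_{k}\}$ are iid Rademacher, all mutually independent. The dominant contributions to $\max_{j}|Y_{j}|$ come from a bounded number of leading Poisson atoms, while the tail of the series can be truncated cleanly because $\alpha>1$ ensures a uniform $\mathds{L}^{1}$ control of the remainder. This decomposition converts the problem into a counting/anti-concentration estimate involving only finitely many jump-atoms, in which the role of the dual exponent $\alpha'$ enters through the balance between the $\Gamma_{k}^{-1/\alpha}$ weights and the counting-level $\log n$.

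The main obstacle is the minoration itself. No short self-contained proof is available in the stable regime since Slepian comparison and log-Sobolev techniques from the Gaussian case do not transfer. The strategy carried out by Marcus exploits the precise power-law tail $\PP(|Z|>t)\sim c_{\alpha}(\sigma/t)^{\alpha}$ for scalar symmetric $\alpha$-stable $Z$ with scale $\sigma$, combined with a Bonferroni/Paley--Zygmund argument on level sets of the form $\{|Y_{j}|>c\delta\}$ with $\delta=\inf_{j<j'}\sigma_{j,j'}$, to conclude that with non-negligible probability at least $(\log n)^{1/\alpha'}$ of the $|Y_{j}|$'s simultaneously exceed a positive constant multiple of $\delta$. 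As the paper's wording indicates, I would ultimately defer the technical heart of the argument to [Marcus, 1984, Theorem 2.2] and note that the stated form follows upon the substitution $\sigma_{j,j'}=\kappa_{r,\alpha}^{-1}(\E|Y_{j}-Y_{j'}|^{r})^{1/r}$.
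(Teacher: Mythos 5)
Your proposal is correct and takes essentially the same route as the paper, which states Lemma \ref{lem:4} without proof as a direct restatement of [Marcus, 1984, Theorem 2.2] \cite{M3}. The only added value in your write-up is making explicit the standard reduction $(\E|Y_{j}-Y_{j'}|^{r})^{1/r}=\kappa_{r,\alpha}\sigma_{j,j'}$ that identifies the fractional-moment pseudometric with the stable scale parameter; the technical core is deferred to the same citation in both cases.
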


\begin{theorem}\label{thm:4}
Let $0\leq t_{0}<t\leq T$ be fixed, $K$ be a nonempty closed bounded decomposable subset of the $\breve{\mathds{L}}^{2}\big([0,T]\times(\mathds{R}^{d}\setminus\{\0\})\times\Omega,\mathcal{P}(\mathbb{F}), \mathrm{Leb}_{[0,T]}\times\nu\upharpoonright_{\{0<\|z\|<1\}}\times\PP;\mathds{R}^{d}\big)$ space, and $g$ be a suitable kernel. If either $\int_{0<\|z\|<1}\|z\|\nu(\dd z)<\infty$ or $\card K=1$, then $\overline{\dec}_{\mathscr{F}_{t}}I^{(4,g)}_{t_{0},t}(K)$ is a bounded subset of the $\mathds{L}^{2}(\Omega,\mathscr{F}_{t},\PP;\mathds{R}^{d})$ space. If $\int_{0<\|z\|<1}\|z\|\nu(\dd z)=\infty$, assume further that there exists a sequence $\{b_{j}z:j\in\mathds{N}_{++}\}\subset K$ ($b_{j}$ valued in $\mathds{R}^{d\times d}$, $\forall j$) such that, for some $i^{\ast}\in\mathds{N}\cap[1,d]$ and any $i\in\mathds{N}\cap[1,d]$, $\{b_{j,i^{\ast},i}\}\subset\mathds{L}^{\infty}([0,T];\mathds{R})$, $b_{j,i^{\ast},i}$'s are nonzero and distinct a.e., and for some $\epsilon>0$ and $r\in[1,2)$,
\begin{equation}\label{4.2}
  \inf_{j<j'}\int^{t}_{t_{0}}\min_{1\leq i\leq d}|b_{j,i^{\ast},i}(s)-b_{j',i^{\ast},i}(s)|^{r}\dd s=\epsilon;
\end{equation}
then $\overline{\dec}_{\mathscr{F}_{t}}I^{(4,g)}_{t_{0},t}(K)$ is not necessarily a bounded subset of the $\mathds{L}^{2}\big(\Omega,\mathscr{F}_{t},\PP;\mathds{R}^{d}\big)$ space.
\end{theorem}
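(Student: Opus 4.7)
The case $\card K=1$ is immediate from the L\'evy--It\^{o} isometry together with $g(t,\cdot)\in\mathds{L}^{2}([0,t);\mathds{R})$, so suppose $\int_{0<\|z\|<1}\|z\|\nu(\dd z)<\infty$. I would split pathwise, for every $h\in K$,
\begin{equation*}
I^{(4,g)}_{t_{0},t}(h)=\int^{t}_{t_{0}}\int_{0<\|z\|<1}g(t,s)h(s-,z)N(\dd s,\dd z)-\int^{t}_{t_{0}}\int_{0<\|z\|<1}g(t,s)h(s-,z)\nu(\dd z)\dd s,
\end{equation*}
which is legitimate under finite variation. The first piece is structurally identical to the $q=3$ functional, now driven by $\nu\upharpoonright_{\{0<\|z\|<1\}}$ in place of $\nu\upharpoonright_{\{\|z\|\geq1\}}$, and its set-valued version is integrably bounded by an adaptation of Theorem \ref{thm:3} for $q=3$ to the finite-variation setting; the second piece is a convoluted Aumann integral, integrably bounded by Theorem \ref{thm:3} for $q=1$. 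Since integrable boundedness is preserved under Minkowski difference, $I^{(4,g)}_{t_{0},t}(K)$ is integrably bounded and Lemma \ref{lem:2} then yields $\mathds{L}^{2}$-boundedness of $\overline{\dec}_{\mathscr{F}_{t}}I^{(4,g)}_{t_{0},t}(K)$.

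\textbf{Counterexample setup.} Under infinite variation I would specialize $\nu\upharpoonright_{\{0<\|z\|<1\}}$ to the L\'evy measure of a symmetric $\alpha$-stable random vector for some $\alpha\in(1,2)$, with a conveniently chosen spectral measure (for instance uniform on the coordinate axes of the unit sphere in $\mathds{R}^{d}$). For the hypothesized sequence $\{b_{j}z\}\subset K$ set $\xi_{j}:=I^{(4,g)}_{t_{0},t}(b_{j}z)$; a direct L\'evy--Khintchine calculation exploiting the determinism of $\{b_{j}\}$ confirms that $(\xi_{1}^{(i^{\ast})},\ldots,\xi_{n}^{(i^{\ast})})$ is jointly symmetric $\alpha$-stable in $\mathds{R}^{n}$. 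Decomposability of $K$ then guarantees, for any $\mathscr{F}_{t}$-measurable partition $\{A_{j}\}_{j=1}^{n}$ of $\Omega$,
\begin{equation*}
Y_{n}:=\sum_{j=1}^{n}\mathds{1}_{A_{j}}\xi_{j}\in\dec_{\mathscr{F}_{t}}I^{(4,g)}_{t_{0},t}(K),
\end{equation*}
and taking $A_{j}=\{|\xi_{j}^{(i^{\ast})}|=\max_{1\leq k\leq n}|\xi_{k}^{(i^{\ast})}|\}$ (measurably disjointified) produces $\|Y_{n}\|_{\mathds{L}^{2}}\geq\E|Y_{n}^{(i^{\ast})}|\geq\E\max_{j}|\xi_{j}^{(i^{\ast})}|$.

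\textbf{Divergence and main obstacle.} Lemma \ref{lem:4} applied to this stable vector supplies
\begin{equation*}
\E\max_{j}|\xi_{j}^{(i^{\ast})}|\geq c_{r,\alpha}\inf_{j<j'}\big(\E|\xi_{j}^{(i^{\ast})}-\xi_{j'}^{(i^{\ast})}|^{r}\big)^{1/r}\log^{1/\alpha'}n
\end{equation*}
for $r\in[1,2)\subset(0,\alpha)$. Each difference $\xi_{j}^{(i^{\ast})}-\xi_{j'}^{(i^{\ast})}$ is itself symmetric $\alpha$-stable, and its $L^{r}$-norm equals a positive multiple of its $\alpha$-scale; with the chosen spectral measure this scale can be shown to dominate $\big(\int^{t}_{t_{0}}g^{\alpha}(t,s)\min_{i}|b_{j,i^{\ast},i}(s)-b_{j',i^{\ast},i}(s)|^{\alpha}\dd s\big)^{1/\alpha}$, so that condition (\ref{4.2}) together with positivity of $g(t,\cdot)$ on a set of positive Lebesgue measure forces a uniform (in $n$) lower bound on the infimum. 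Hence $\|Y_{n}\|_{\mathds{L}^{2}}$ grows at least like $\log^{1/\alpha'}n$ and diverges, contradicting $\mathds{L}^{2}$-boundedness of $\overline{\dec}_{\mathscr{F}_{t}}I^{(4,g)}_{t_{0},t}(K)$. The delicate step is the last one: reconciling the $r$-exponent of hypothesis (\ref{4.2}) with the $\alpha$-exponent governing the stable scale, while matching the $\min_{i}$ form of (\ref{4.2}) against the spectral-measure-weighted sum that naturally defines that scale. This mismatch is precisely what forces the specific symmetry specialization and prevents, for now, a proof valid for arbitrary $K$.
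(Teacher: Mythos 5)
Your overall architecture matches the paper's proof (singleton case; splitting under finite variation; a symmetric $\alpha$-stable counterexample with axis-supported spectral measure, decomposable argmax combinations, and Lemma \ref{lem:4}), but there is one step that genuinely fails as written. You apply Lemma \ref{lem:4} to the vector $(\xi_{1}^{(i^{\ast})},\dots,\xi_{n}^{(i^{\ast})})$ with $\xi_{j}:=I^{(4,g)}_{t_{0},t}(b_{j}z)$, asserting that a L\'{e}vy--Khintchine computation shows it is jointly symmetric $\alpha$-stable. It is not: $I^{(4,g)}$ integrates only over $\{0<\|z\|<1\}$, so the associated L\'{e}vy measure is a \emph{truncated} stable measure and the resulting vector is merely infinitely divisible, not $\alpha$-stable. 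Lemma \ref{lem:4} is stated precisely for symmetric $\alpha$-stable vectors, and no comparable lower bound is available for the truncated object (the Appendix of the paper is devoted to showing that the natural Slepian-type substitute fails). The paper's proof circumvents this by forming $Y_{i^{\ast};t}$ from $I^{(3,g)}_{t_{0},t}+I^{(4,g)}_{t_{0},t}$, i.e.\ adding back the large-jump integral so that the full (genuinely stable) integral is what Lemma \ref{lem:4} sees, and then transferring the divergence to $\overline{\dec}_{\mathscr{F}_{t}}I^{(4,g)}_{t_{0},t}(K)$ by contradiction via the subadditivity estimate (\ref{4.8}) and the already-known boundedness of $\overline{\dec}_{\mathscr{F}_{t}}I^{(3,g)}_{t_{0},t}(K)$ from Theorem \ref{thm:3}. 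That detour is the missing idea in your argument; without it the divergence of $\E\max_{j}|\xi_{j}^{(i^{\ast})}|$ is not established.

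Two secondary points. In the sufficiency part, invoking ``an adaptation of Theorem \ref{thm:3} for $q=3$'' for the first piece of the split is not enough when $\nu(\{0<\|z\|<1\})=\infty$ but $\int_{0<\|z\|<1}\|z\|\nu(\dd z)<\infty$: the cited $q=3$ result rests on the finiteness of the intensity measure, and the infinite-activity finite-variation case needs the dedicated H\"{o}lder estimate with weight $\|z\|$ that the paper carries out in (\ref{4.5}). Finally, you correctly flag the $r$-versus-$\alpha$ exponent reconciliation as unresolved; the paper handles it with Jensen's inequality to pass from the $\alpha$-scale to an $r$-th power integral before invoking (\ref{4.2}), so this piece is repairable along the lines you anticipate.
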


\begin{proof}
The proof is done in three steps, concerning three situations respectively.

\textbf{Step 1.} Suppose $\card K=1$. Then $\int^{t}_{t_{0}}\int_{0<\|z\|<1}g(t,s)K(s-,z)\tilde{N}(\dd s,\dd z)$ is single-valued, and so $\overline{\dec}_{\mathscr{F}_{t}}I^{(4,g)}_{t_{0},t}(K)$ is a singleton and hence a bounded subset.

\textbf{Step 2.} Suppose $\card K>1$ but $\int_{0<\|z\|<1}\|z\|\nu(\dd z)=V_{1}<\infty$. In this case, we can rewrite
\begin{equation}\label{4.3}
  I^{(4,g)}_{t_{0},t}(K)=\bigg\{\int^{t}_{t_{0}}\int_{0<\|z\|<1}g(t,s)h(s-,z)N(\dd s,\dd z)-\int^{t}_{t_{0}}\int_{0<\|z\|<1}g(t,s)h(s-,z)\nu(\dd z)\dd s: h\in K\bigg\},
\end{equation}
where both (single-valued) integral terms on the right-hand side have finite-variation integrators and are well-defined as Lebesgue--Stieltjes integrals. Then, we apply the result in [Hiai and Umegaki, 1977, Theorem 2.2] \cite{HU} to interchange the expectation with supremum and obtain based on Definition \ref{def:3},
\begin{align*}
  \E\mathbf{d}^{2}_{\rm H}\bigg(\int^{t}_{t_{0}}\int_{0<\|z\|<1}g(t,s)K(s-,z)\tilde{N}(\dd s,\dd z),\{\0\}\bigg)&=\sup\big\{\E\|\eta\|^{2}:\eta\in\overline{\dec}_{\mathscr{F}_{t}}I^{(4,g)}_{t_{0},t}(K)\big\} \\ &=\sup\big\{\E\|\eta\|^{2}:\eta\in\dec_{\mathscr{F}_{t}}I^{(4,g)}_{t_{0},t}(K)\big\}.
\end{align*}
To argue the finiteness of the last supremum, note that by Definition \ref{def:3}, any $d$-dimensional random vector $\eta\in\dec_{\mathscr{F}_{t}}I^{(4,g)}_{t_{0},t}(K)$ admits the representation
\begin{equation*}
  \eta=\sum^{n}_{j=1}\mathds{1}_{A_{j}(t)}\int^{t}_{t_{0}}\int_{0<\|z\|<1}g(t,s)h_{j}(s-,z)\tilde{N}(\dd s,\dd z),
\end{equation*}
for some $\mathscr{F}_{t}$-measurable partition $\{A_{j}(t):j\in\{1,\dots,n\}\}$ of $\Omega$, some (finite) sequence $\{h_{j}:j\in\{1,\dots,n\}\}\subset K$, $n\geq1$. For any such $\eta$, assuming further that $\nu(0<\|z\|<1)=V_{0}<\infty$, we then have that
\begin{align}\label{4.4}
  \E\|\eta\|^{2}&=\E\Bigg\|\sum^{n}_{j=1}\mathds{1}_{A_{j}(t)}\int^{t}_{t_{0}}\int_{0<\|z\|<1}g(t,s)h_{j}(s-,z)\tilde{N}(\dd s,\dd z)\Bigg\|^{2} \nonumber\\
  &=\sum^{n}_{j=1}\E\bigg(\mathds{1}_{A_{j}(t)}\bigg\|\int^{t}_{t_{0}}\int_{0<\|z\|<1}g(t,s)h_{j}(s-,z)\tilde{N}(\dd s,\dd z)\bigg\|^{2}\bigg) \nonumber\\
  &\leq2V_{0}(t-t_{0})\sum^{n}_{j=1}\bigg(\E\bigg(\mathds{1}_{A_{j}(t)}\int^{t}_{t_{0}}\int_{0<\|z\|<1}\|g(t,s)h_{j}(s-,z)\|^{2}N(\dd s,\dd z)\bigg) \nonumber\\
  &\qquad+\E\bigg(\mathds{1}_{A_{j}(t)}\int^{t}_{t_{0}}\int_{0<\|z\|<1}\|g(t,s)h_{j}(s-,z)\|^{2}\nu(\dd z)\dd s\bigg)\bigg) \nonumber\\
  &\leq4V_{0}(t-t_{0})\E\int^{t}_{t_{0}}\int_{0<\|z\|<1}\mathbf{d}^{2}_{\rm H}(g(t,s)K(s-,z),\{\0\})\nu(\dd z)\dd s \nonumber\\
  &<\infty,
\end{align}
where the first equality follows from the disjointness of $A_{j}$'s (see also [Kisielewicz and Michta, 2017, Lemma 2.5 and Lemma 2.6] \cite{KM}) and the first inequality is a result of (8) and H\"{o}lder's inequality, and finiteness follows from the square-integrable boundedness of $gK$ (recall (\ref{2.3})).

Similarly, if $\nu(\{0<\|z\|<1\})=\infty$, following the first equality in (\ref{4.4}) we have instead
\begin{equation}\label{4.5}
  \E\|\eta\|^{2}\leq4\sqrt{V_{1}(t-t_{0})}\E\bigg(\int^{t}_{t_{0}}\int_{0<\|z\|<1}\mathbf{d}^{2}_{\rm H}\bigg(\frac{g(t,s)K(s-,z)}{\|z\|},\{\0\}\bigg)\|z\|\nu(\dd z)\dd s\bigg)^{1/2}<\infty,
\end{equation}
where the last expectation is also finite because $g(t,s)K(s-,z)/\|z\|$ is square-integrably bounded with respect to the (finite) measure $\mathrm{Leb}_{[t_{0},t)}\times\int_{\cdot}\|z\|\nu(\dd z)$. The second last inequalities in both (\ref{4.4}) and (\ref{4.5}) ensure that $\E\mathbf{d}^{2}_{\rm H}\big(\int^{t}_{t_{0}}\int_{0<\|z\|<1}g(t,s)K(s-,z)\tilde{N}(\dd s,\dd z),\{\0\}\big)<\infty$, which with Lemma \ref{lem:2} implies that $\overline{\dec}_{\mathscr{F}_{t}}I^{(4,g)}_{t_{0},t}(K)$ is a bounded subset.

\textbf{Step 3.} Suppose that $\card K>1$ and $\int_{0<\|z\|<1}\|z\|\nu(\dd z)=\infty$, so that the separation in (\ref{4.3}) is invalid. We observe that for every sequence $\{X_{j;t}:j\in\mathds{N}\cap[1,n]\}$, $n\geq2$, of $d$-dimensional random vectors within $\overline{\dec}_{\mathscr{F}_{t}}I^{(4,g)}_{t_{0},t}(K)$ it holds that
\begin{equation}\label{4.6}
  \E\mathbf{d}^{p}_{\rm H}\bigg(\int^{t}_{t_{0}}\int_{0<\|z\|<1}g(t,s)K(s-,z)\tilde{N}(\dd s,\dd z),\{\0\}\bigg)\geq\E\Big(\max_{1\leq j\leq n}\|X_{j;t}\|\Big)^{p},\quad p\in[1,2].
\end{equation}
By the stated assumptions, we can assign $\int^{t}_{t_{0}}\int_{0<\|z\|<1}g(t,s)b_{j}(s-)z\tilde{N}(\dd s,\dd z):=X_{j;t}$ for each $j$, with $X_{j,i;t}=\int^{t}_{t_{0}}\int_{0<\|z\|<1}[g(t,s)b_{j}(s-)z]_{i}\tilde{N}(\dd s,\dd z)$ for any $i\in\mathds{N}\cap[1,d]$.

It is clear that each $\int^{t}_{t_{0}}\int_{0<\|z\|<1}[g(t,s)b_{j}(s-)z]_{i}\tilde{N}(\dd s,\dd z)$ has an infinitely divisible distribution and is associated with some (generally time-inhomogeneous) L\'{e}vy measure -- call it $\bar{\nu}_{j,i;t}$. Based on condition (ii) of Definition \ref{def:1} and the fact that $b_{j}(s)z\neq\0$ for $\mathrm{Leb}_{[t_{0},t)}\times\nu\upharpoonright_{\{0<\|z\|<1\}}$-\text{a.e.} $(s,z)\in[t_{0},t)\times(\mathds{R}^{d}\setminus\{\0\})$ and $\int^{t}_{t_{0}}\int_{0<\|z\|<1}g^{2}(t,s)\|b_{j}(s-)z\|^{2}\nu(\dd z)\dd s<\infty$, the L\'{e}vy--Khintchine formula gives that for every $i,j$ and some $c_{i}>0$,
\begin{align}\label{4.7}
  \log\E e^{\ii uX_{j,i;t}}&=\int^{t}_{t_{0}}\int_{0<\|z\|<1}\big(e^{\ii u[g(t,s)b_{j}(s)z]_{i}}-1-\ii u[g(t,s)b_{j}(s)z]_{i}\big)\nu(\dd z)\dd s \nonumber\\
  &=\int_{0<|z_{i}|<c_{i}}\big(e^{\ii uz_{i}}-1-\ii uz_{i}\big)\bar{\nu}_{j,i;t}(\dd z_{i}),\quad u\in\mathds{R},
\end{align}
from where it is clear that $\bar{\nu}_{j,i;t}(\{0<|z_{i}|<c_{i}\})=\infty$ if and only if $\nu(\{0<\|z\|<1\})=\infty$, and $\int_{0<|z_{i}|<c_{i}}|z_{i}|\bar{\nu}_{j,i;t}(\dd z_{i})=\infty$ if and only if $\int_{0<\|z\|<1}\|z\|\nu(\dd z)=\infty$.

Since it is known that $\overline{\dec}_{\mathscr{F}_{t}}I^{(3,g)}_{t_{0},t}(K)$ is bounded (Theorem \ref{thm:3}), it is equivalent to consider the unboundedness of $\overline{\dec}_{\mathscr{F}_{t}}\big(I^{(3,g)}_{t_{0},t}(K)+I^{(4,g)}_{t_{0},t}(K)\big)$; indeed, for any $p\in[1,2]$, following Definition \ref{def:3}, let $\int^{t}_{t_{0}}\int_{\|z\|\geq1}g(t,s)K(s-,z)N(\dd s,\dd z)+\int^{t}_{t_{0}}\int_{0<\|z\|<1}g(t,s)K(s-,z)\tilde{N}(\dd s,\dd z)$ denote the random variable whose collection of ($\mathscr{F}_{t}$-measurable) $p$-integral selectors is $\overline{\dec}_{\mathscr{F}_{t}}\big(I^{(3,g)}_{t_{0},t}(K)+I^{(4,g)}_{t_{0},t}(K)\big)$, and then by consulting again [Hiai and Umegaki, 1977, Theorem 2.2] \cite{HU} we have that
\begin{align}\label{4.8}
  &\quad\E\mathbf{d}^{p}_{\rm H}\bigg(\int^{t}_{t_{0}}\int_{\|z\|\geq1}g(t,s)K(s-,z)N(\dd s,\dd z)+\int^{t}_{t_{0}}\int_{0<\|z\|<1}g(t,s)K(s-,z)\tilde{N}(\dd s,\dd z),\{\0\}\bigg) \nonumber\\
  &=\sup\big\{\E\|\eta\|^{p}:\eta\in\overline{\dec}_{\mathscr{F}_{t}}\big(I^{(3,g)}_{t_{0},t}(K)+I^{(4,g)}_{t_{0},t}(K)\big)\big\} \nonumber\\
  &\leq\sup\big\{\E\|\eta\|^{p}:\eta\in\overline{\dec}_{\mathscr{F}_{t}}I^{(3,g)}_{t_{0},t}(K) +\overline{\dec}_{\mathscr{F}_{t}}I^{(4,g)}_{t_{0},t}(K)\big\} \nonumber\\
  &\leq C_{p}\big(\sup\big\{\E\|\eta\|^{p}:\eta\in\overline{\dec}_{\mathscr{F}_{t}}I^{(3,g)}_{t_{0},t}(K)\big\} +\sup\big\{\E\|\theta\|^{p}:\theta\in\overline{\dec}_{\mathscr{F}_{t}}I^{(4,g)}_{t_{0},t}(K)\big\}\big) \nonumber\\
  &=C_{p}\bigg(\E\mathbf{d}^{p}_{\rm H}\bigg(\int^{t}_{t_{0}}\int_{\|z\|\geq1}g(t,s)K(s-,z)N(\dd s,\dd z),\{\0\}\bigg) \nonumber\\
  &\qquad\quad+\E\mathbf{d}^{p}_{\rm H}\bigg(\int^{t}_{t_{0}}\int_{0<\|z\|<1}g(t,s)K(s-,z)\tilde{N}(\dd s,\dd z),\{\0\}\bigg)\bigg),
\end{align}
for a constant $C_{p}>0$ depending only on $p$, where the first inequality follows from the fundamental properties of the decomposable hull; see, e.g., [Kisielewicz, 2020b, Chapter III Theorem 3.3.4] \cite{K8}.

To that end, let us take $\nu$ to be the L\'{e}vy measure of a $d$-dimensional symmetric $\alpha$-stable random vector ($\alpha\in(1,2)$), which is further assumed to have \text{i.i.d.} components for convenience. Its L\'{e}vy measure is hence supported on the (punctured) union of axes $\bigcup^{d}_{i=1}\{[0,\dots,0,z_{i},0,\dots,0]:z_{i}\in\mathds{R}\}\setminus\{\0\}$, and let $\nu_{1}$ denote the L\'{e}vy measure (on $\mathds{R}\setminus\{0\}$) of the first component. Now, consider $i^{\ast}\in\mathds{N}\cap[1,d]$ fixed as mentioned in the theorem. Following (\ref{4.7}), since $gb_{j}$ is for each $j$ a deterministic function, by the fundamental property of stable distributions the $n$-dimensional random vector
\begin{equation*}
  Y_{i^{\ast};t}:=\bigg[\int^{t}_{t_{0}}\int_{\|z\|\geq1}[g(t,s)b_{j}(s-)z]_{i^{\ast}}N(\dd s,\dd z)+\int^{t}_{t_{0}}\int_{0<\|z\|<1}[g(t,s)b_{j}(s-)z]_{i^{\ast}}\tilde{N}(\dd s,\dd z):j\in\mathds{N}\cap[1,n]\bigg]
\end{equation*}
also has a symmetric $\alpha$-stable distribution (with the same $\alpha$). Indeed, the associated L\'{e}vy measure can be expressed as
\begin{align}\label{4.9}
  \bar{\nu}_{i^{\ast};t}(\dd y)&=\sum^{d}_{i=1}\int^{t}_{t_{0}}\nu_{1}\bigg(\frac{\dd y_{1}}{g(t,s)b_{1,i^{\ast},i}(s)}:\; \frac{y_{j}}{b_{j,i^{\ast},i}(s)}=\frac{y_{j'}}{b_{j',i^{\ast},i}(s)},\;\forall j<j'\bigg)\dd s \nonumber\\
  &=\sum^{d}_{i=1}\int^{t}_{t_{0}}C_{\alpha}\big|g(t,s)b_{1,i^{\ast},i}(s)\big|^{\alpha} \mathds{1}_{\{y_{j}/b_{j,i^{\ast},i}(s)=y_{j'}/b_{j',i^{\ast},i}(s),\;\forall j<j'\}}\dd s\frac{\dd y_{1}}{|y_{1}|^{\alpha+1}},\quad y\in\mathds{R}^{n}\setminus\{\0\},
\end{align}
where $C_{\alpha}$ is a positive constant depending only on $\alpha$. Recall that $b_{j,i^{\ast},i}$'s are nonzero a.e.

Let $\varsigma_{1}$ be the spectral measure (on the unit circle $S_{1}$) corresponding to $\nu_{1}$, as in (\ref{4.1}). In the following, $C_{r,\alpha},C'_{r,\alpha},C''_{r,\alpha}>0$ are constants depending only on $r$ and $\alpha$. By consulting results on the representations of stable processes (see [Marcus and Pisier, 1984, Section 1] \cite{MP}), we have that for any $r\in[1,\alpha)$,
\begin{align*}
  \E|Y_{j,i^{\ast};t}-Y_{j',i^{\ast};t}|^{r}&=C_{r,\alpha}\bigg(\int^{t}_{t_{0}}\int_{S_{1}}|g(t,s)|^{\alpha}\Bigg|\sum^{d}_{i=1}(b_{j,i^{\ast},i}(s) -b_{j',i^{\ast},i}(s))x\bigg|^{\alpha}\varsigma_{1}(\dd x)\dd s\bigg)^{r/\alpha} \nonumber\\
  &=C'_{r,\alpha}\bigg(\int^{t}_{t_{0}}|g(t,s)|^{\alpha}\Bigg|\sum^{d}_{i=1}(b_{j,i^{\ast},i}(s)-b_{j',i^{\ast},i}(s))\bigg|^{\alpha}\dd s\bigg)^{r/\alpha} \nonumber\\
  &\geq C'_{r,\alpha}(t-t_{0})^{r/\alpha-1}\int^{t}_{t_{0}}|g(t,s)|^{r}\Bigg|\sum^{d}_{i=1}(b_{j,i^{\ast},i}(s)-b_{j',i^{\ast},i}(s))\Bigg|^{r}\dd s,
\end{align*}
where the last two lines use the finiteness of $\varsigma_{1}$ and Jensen's inequality, respectively. Given that condition (\ref{4.2}) is in force, we further have that
\begin{align*}
  \E|Y_{j,i^{\ast};t}-Y_{j',i^{\ast};t}|^{r}&\geq C'_{r,\alpha}(t-t_{0})^{r/\alpha-1}d^{r}\int^{t}_{t_{0}}|g(t,s)|^{r}\min_{1\leq i\leq d}|b_{j,i^{\ast},i}(s)-b_{j',i^{\ast},i}(s)|^{r} \dd s \\
  &\geq C'_{r,\alpha}(t-t_{0})^{r/\alpha-1}d^{r}\epsilon>0.
\end{align*}
Then, by applying Lemma \ref{lem:4} it follows that
\begin{equation}\label{4.10}
  \Big(\E\max_{1\leq j\leq n}|Y_{j,i^{\ast};t}|\Big)^{r}\geq C''_{r,\alpha}(t-t_{0})^{r/\alpha-1}d^{r}\epsilon\log^{r(\alpha-1)/\alpha}n\rightarrow\infty,\quad\text{as }n\rightarrow\infty.
\end{equation}

Notably, with the stable random vector $Y_{i^{\ast};t}$, for $K\supset\{b_{j}z:j\in\mathds{N}\cap[1,n]\}$ it is only ensured that $K\subset\breve{\mathds{L}}^{r}\big([0,T]\times(\mathds{R}^{d}\setminus\{\0\})\times\Omega,\mathcal{P}(\mathbb{F}), \mathrm{Leb}_{[0,T]}\times\nu\times\PP;\mathds{R}^{d}\big)$ ($r<2$), while it is always true that $K\subset\breve{\mathds{L}}^{2}\big([0,T]\times(\mathds{R}^{d}\setminus\{\0\})\times\Omega,\mathcal{P}(\mathbb{F}), \mathrm{Leb}_{[0,T]}\times\nu\upharpoonright_{\{0<\|z\|<1\}}\times\PP;\mathds{R}^{d}\big)$, as assumed. Hence, to proceed we can set $p=r$ in (\ref{4.6}) and (\ref{4.8}) and replace the symbol ``$X$'' in (\ref{4.6}) with ``$Y_{i^{\ast}}$'' to obtain
\begin{align*}
  &\quad\E\mathbf{d}^{r}_{\rm H}\bigg(\int^{t}_{t_{0}}\int_{\|z\|\geq1}g(t,s)K(s-,z)N(\dd s,\dd z)+\int^{t}_{t_{0}}\int_{0<\|z\|<1}g(t,s)K(s-,z)\tilde{N}(\dd s,\dd z),\{\0\}\bigg) \\
  &\geq\Big(\E\max_{1\leq j\leq n}|Y_{j,i^{\ast};t}|\Big)^{r},
\end{align*}
which by (\ref{4.10}) grows to infinity in the limit as $n\rightarrow\infty$.

Now, suppose that $\overline{\dec}_{\mathscr{F}_{t}}I^{(4,g)}_{t_{0},t}(K)$ is a bounded subset of the $\mathds{L}^{r}\big(\Omega,\mathscr{F}_{t},\PP;\mathds{R}^{d}\big)$ space. Then, the inequality (\ref{4.8}) (with $p=r$) would imply that so is $\overline{\dec}_{\mathscr{F}_{t}}\big(I^{(3,g)}_{t_{0},t}(K)+I^{(4,g)}_{t_{0},t}(K)\big)$ by Lemma \ref{lem:2} -- a contradiction. Therefore, $\overline{\dec}_{\mathscr{F}_{t}}I^{(4,g)}_{t_{0},t}(K)$ is not a bounded subset of $\mathds{L}^{r}\big(\Omega,\mathscr{F}_{t},\PP;\mathds{R}^{d}\big)$ (and hence $\mathds{L}^{2}\big(\Omega,\mathscr{F}_{t},\PP;\mathds{R}^{d}\big)$).
\end{proof}

A few remarks are in order. First, the conclusion of Theorem \ref{thm:4} is true irrespective of the kernel $g$, i.e., it is applicable to usual set-valued semimartingales (with $g\equiv1$) as well. The immediate implication is that set-valued stochastic integrals with respect to an infinite-variation Poisson random measure can be unbounded like their It\^{o}-type counterparts. Besides, the argument (\ref{4.10}) would fail if one were to consider the stability index $\alpha\in(0,1]$. Note that if $\alpha<1$, the random measure $N$ has finite variation (albeit infinite activity), for which integrable boundedness has been proved; if $\alpha=1$, $N$ does have infinite variation, in which case (\ref{4.10}) is inconclusive. Moreover, in the Gaussian case, the analog of (\ref{4.10}) is nothing but a direct consequence of Slepian's lemma ([Slepian, 1962] \cite{S1}), where $r$ is allowed to be arbitrarily large, which argument was employed in [Michta, 2015] \cite{M5} for proving the unboundedness of $\overline{\dec}_{\mathscr{F}_{t}}I^{(2,1)}_{t_{0},t}(K)$ (as stated in Theorem \ref{thm:3}) upon setting $r=2$. However, it is somewhat cumbersome to employ a Slepian-type inequality for $\alpha$-stable random vectors, such as the one given in [Samorodnitsky and Taqqu, 1993] \cite{ST1}, following the idea of proof in [Michta, 2015] \cite{M5}, to reach the conclusion of Theorem \ref{thm:4}; in fact, the proposed condition \text{ibid.} turns out to be too strong compared to its Gaussian counterpart in [Slepian, 1962] \cite{S1}, failing to establish a useful connection in our context; some details are given in \ref{Appx}.

Theorem \ref{thm:3} and Theorem \ref{thm:4} give rise to plenty of room for constructing integrably bounded set-valued integrands whose set-valued (convoluted) It\^{o} integral and set-valued (convoluted) integral with respect to an infinite-variation Poisson random measure ($q\in\{2,4\}$ in Definition \ref{def:3}) are not integrably bounded in general. For $q=4$, specifically, the condition (\ref{4.2}) implies that such an integrand could contain orthogonal sequences of the $\mathds{L}^{2}\big([t_{0},t);\mathds{R}\big)$ space (compare [Michta, 2015, Remark 3.12] \cite{M5} for $q=2$). We give two simple examples below in $d=1$ dimension.

\medskip

\textsl{Example 4.}\quad Let $K=\breve{S}^{2}_{\mathcal{P}(\mathbb{F})}([0,1])$ and $g(T,s)=T-s$ (Riemann--Liouville) for $s\in[0,T)$. Then, $\card K=2^{\mathfrak{c}}$ and for $q=2$ the set-valued convoluted It\^{o} integral
\begin{equation*}
  \int^{T}_{0}(T-s)[0,1]\dd W_{s}:=\int^{T}_{0}(T-s)\breve{S}^{2}_{\mathcal{P}(\mathbb{F})}([0,1])(s-)\dd W_{s}
\end{equation*}
satisfies $\E\mathbf{d}^{2}_{\rm H}\big(\int^{T}_{0}(T-s)[0,1]\dd W_{s},\{\0\}\big)=\infty$ by Theorem \ref{thm:3}. This can also be viewed as a special instance of [Michta, 2015, Remark 3.12] \cite{M5}.

\medskip

\textsl{Example 5.}\quad Let $K=z\breve{S}^{1}_{\mathcal{P}(\mathbb{F})}([0,1])$ for $0<\|z\|<1$ and still $g(T,s)=T-s$ for $s\in[0,T)$. For $q=4$, consider the symmetric $3/2$-stable process (\text{a.k.a.} the Holtsmark process), whose L\'{e}vy measure is $\nu(\dd z)=C/|z|^{5/2}\dd z$, $z\in\mathds{R}\setminus\{0\}$, with $C>0$ and $\int_{0<|z|<1}|z|\nu(\dd z)=\infty$. Then, the assumptions in Theorem \ref{thm:4} are fulfilled. Based on the class of counterexamples constructed in its proof, for the set-valued convoluted integral with respect to the compensated Poisson random measure
\begin{equation*}
  \int^{T}_{0}\int_{0<|z|<1}(T-s)z[0,1]\tilde{N}(\dd s,\dd z):=\int^{T}_{0}\int_{0<|z|<1}(T-s)z\breve{S}^{2}_{\mathcal{P}(\mathbb{F})}([0,1])(s-)\tilde{N}(\dd s,\dd z),
\end{equation*}
we also have $\E\mathbf{d}^{2}_{\rm H}\big(\int^{T}_{0}\int_{0<|z|<1}(T-s)[0,1]z\tilde{N}(\dd s,\dd z),\{\0\}\big)=\infty$.

\medskip

We close this section with the following theorem to summarize some basic properties of the set-valued convoluted stochastic integrals. They can be readily verified from Definition \ref{def:3}, Theorem \ref{thm:2}, the properties of the decomposable hull and the collections of measurable selectors, and the Eberlein--\v{S}mulian theorem, along the lines of [Kisielewicz, 2020b, Chapter V Theorem 5.2.1] \cite{K8}.

\begin{theorem}\label{thm:5}
Let $K$ and $H$ be nonempty subsets of the $\mathds{L}^{p}(D_{q}\times\Omega,\mathcal{P}(\mathbb{F}),M_{q}\times\PP;E_{q})$ space. Then the following assertions hold for any $q\in\{1,2,3,4\}$, any given suitable kernel $g$, and any fixed $0\leq t_{0}<t\leq T$: \medskip\\
(i) $\int^{t}_{t_{0}}g(t,s)\cl_{\mathds{L}^{p}}K(s-)\dd s=\int^{t}_{t_{0}}g(t,s)K(s-)\dd s$, $\int^{t}_{t_{0}}g(t,s)\cl_{\mathds{L}^{p}}K(s-)\dd W_{s}=\int^{t}_{t_{0}}g(t,s)K(s-)\dd W_{s}$, \\ $\int^{t}_{t_{0}}\int_{\|z\|\geq1}g(t,s)\cl_{\mathds{L}^{p}}K(s-,z)N(\dd s,\dd z)=\int^{t}_{t_{0}}\int_{\|z\|\geq1}g(t,s)K(s-,z)N(\dd s,\dd z)$, \\ and $\int^{t}_{t_{0}}\int_{0<\|z\|<1}g(t,s)\cl_{\mathds{L}^{p}}K(s-,z)\tilde{N}(\dd s,\dd z)=\int^{t}_{t_{0}}\int_{0<\|z\|<1}g(t,s)K(s-,z)\tilde{N}(\dd s,\dd z)$, all $\PP$-a.s.; \\
(ii) $\int^{t}_{t_{0}}g(t,s)\overline{\co}_{\mathds{L}^{p}}K(s-)\dd s=\overline{\co}\int^{t}_{t_{0}}g(t,s)K(s-)\dd s$, $\int^{t}_{t_{0}}g(t,s)\overline{\co}_{\mathds{L}^{p}}K(s-)\dd W_{s}=\overline{\co}\int^{t}_{t_{0}}g(t,s)K(s-)\dd W_{s}$, \\ $\int^{t}_{t_{0}}\int_{\|z\|\geq1}g(t,s)\overline{\co}_{\mathds{L}^{p}}K(s-,z)N(\dd s,\dd z)=\overline{\co}\int^{t}_{t_{0}}\int_{\|z\|\geq1}g(t,s)K(s-,z)N(\dd s,\dd z)$, \\ and $\int^{t}_{t_{0}}\int_{0<\|z\|<1}g(t,s)\overline{\co}_{\mathds{L}^{p}}K(s-,z)\tilde{N}(\dd s,\dd z)=\overline{\co}\int^{t}_{t_{0}}\int_{0<\|z\|<1}g(t,s)K(s-,z)\tilde{N}(\dd s,\dd z)$, all $\PP$-a.s.; \\
(iii) as long as $I^{(q,g)}_{t_{0},t}(K)$ and $I^{(q,g)}_{t_{0},t}(H)$ are bounded subsets of $\mathds{L}^{p}(\Omega,\mathscr{F}_{t},\PP;\mathds{R}^{d})$, $\int^{t}_{t_{0}}g(t,s)(K+H)(s-)\dd s=\int^{t}_{t_{0}}g(t,s)K(s-)\dd s+\int^{t}_{t_{0}}g(t,s)H(s-)\dd s$, $\PP$-a.s., and the same additivity property holds for the other three types of integrals.
\end{theorem}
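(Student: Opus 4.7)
The plan is to deduce all three assertions from Theorem \ref{thm:2} by passing, via Definition \ref{def:3} and Theorem \ref{thm:1}, to equalities at the level of $\mathds{L}^{p}$-selector sets. Concretely, for each of (i), (ii), (iii), I would show that the two $\mathscr{F}_{t}$-measurable multifunctions on either side have the same closed decomposable selector set inside $\mathds{L}^{p}(\Omega,\mathscr{F}_{t},\PP;\mathds{R}^{d})$; the one-to-one correspondence from Theorem \ref{thm:1} then delivers $\PP$-a.s.\ equality. Writing $A:=I^{(q,g)}_{t_{0},t}(K)$ and $B:=I^{(q,g)}_{t_{0},t}(H)$, the task reduces to verifying the commutation of $\overline{\dec}_{\mathscr{F}_{t}}$ with $\cl_{\mathds{L}^{p}}$, $\overline{\co}_{\mathds{L}^{p}}$, and Minkowski sum on $A$ and $B$, and then invoking Theorem \ref{thm:2}(i), (ii), (iii) respectively.

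For (i), the key observation is the trivial identity $\overline{\dec}_{\mathscr{F}_{t}}(\cl_{\mathds{L}^{p}}A)=\overline{\dec}_{\mathscr{F}_{t}}A$, since $\overline{\dec}$ already incorporates the $\mathds{L}^{p}$-closure; combined with Theorem \ref{thm:2}(i) this yields (i) at once. For (ii), after invoking Theorem \ref{thm:2}(ii) the task is to establish $\overline{\dec}_{\mathscr{F}_{t}}(\overline{\co}_{\mathds{L}^{p}}A)=\overline{\co}_{\mathds{L}^{p}}(\overline{\dec}_{\mathscr{F}_{t}}A)$. The inclusion ``$\supset$'' rests on showing that $\co(\overline{\dec}A)$ is decomposable, which I would do by the common-refinement trick: given two convex combinations $\sum_{i}\lambda_{i}u_{i}$ and $\sum_{j}\mu_{j}v_{j}$ with $u_{i},v_{j}\in\overline{\dec}A$, the mixture $\mathds{1}_{A'}\sum_{i}\lambda_{i}u_{i}+\mathds{1}_{A'^{\complement}}\sum_{j}\mu_{j}v_{j}$ rewrites as $\sum_{i,j}\lambda_{i}\mu_{j}(\mathds{1}_{A'}u_{i}+\mathds{1}_{A'^{\complement}}v_{j})$, whose inner mixtures remain in $\overline{\dec}A$ by decomposability. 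The reverse inclusion ``$\subset$'' exploits that $\dec$ of a convex set is again convex (by refining the underlying $\mathscr{F}_{t}$-partitions realizing its elements), so $\overline{\dec}(\overline{\co}_{\mathds{L}^{p}}A)$ is closed, convex, and decomposable, and therefore contains $\overline{\co}_{\mathds{L}^{p}}(\overline{\dec}_{\mathscr{F}_{t}}A)$.

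For (iii), Theorem \ref{thm:2}(iii) reduces the claim to $\overline{\dec}_{\mathscr{F}_{t}}(A+B)=\overline{\dec}_{\mathscr{F}_{t}}A+\overline{\dec}_{\mathscr{F}_{t}}B$ under the assumed $\mathds{L}^{p}$-boundedness of $A$ and $B$. The ``$\supset$'' inclusion is routine, via common refinements of the $\mathscr{F}_{t}$-partitions displaying elements of $\dec A$ and $\dec B$ as a single element of $\dec(A+B)$. The main obstacle I anticipate is the ``$\subset$'' direction, because in infinite dimensions the $\mathds{L}^{p}$-closure of a Minkowski sum can strictly exceed the sum of the closures. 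Here I would bring in the Eberlein--\v{S}mulian theorem in the reflexive regime $1<p<\infty$: bounded closed subsets of $\mathds{L}^{p}(\Omega,\mathscr{F}_{t},\PP;\mathds{R}^{d})$ are relatively weakly sequentially compact, so that for any $\mathds{L}^{p}$-limit $\zeta_{\infty}$ of sums $\eta_{n}+\theta_{n}$ with $\eta_{n}\in\overline{\dec}_{\mathscr{F}_{t}}A$ and $\theta_{n}\in\overline{\dec}_{\mathscr{F}_{t}}B$, a subsequence yields weak limits $\eta_{n}\rightharpoonup\eta_{\infty}\in\overline{\dec}_{\mathscr{F}_{t}}A$ and $\theta_{n}\rightharpoonup\theta_{\infty}\in\overline{\dec}_{\mathscr{F}_{t}}B$ with $\zeta_{\infty}=\eta_{\infty}+\theta_{\infty}$; a Mazur convexification then converts these weak limits into norm limits within the respective closed decomposable hulls (which are convex in view of the argument for (ii)). Theorem \ref{thm:1} then completes the proof.
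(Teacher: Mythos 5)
Your overall strategy -- reducing everything to identities for closed decomposable hulls of the functionals $A=I^{(q,g)}_{t_{0},t}(K)$, $B=I^{(q,g)}_{t_{0},t}(H)$ and then passing through Theorem \ref{thm:1} -- is exactly the route the paper indicates (its proof is only a one-sentence citation of Definition \ref{def:3}, Theorem \ref{thm:2}, decomposable-hull and selector properties, and Eberlein--\v{S}mulian, following Kisielewicz's book). Your treatment of (i) is correct, and your two refinement arguments in (ii) are both valid (though the labels $\subset$ and $\supset$ are attached to the wrong inclusions: decomposability of $\co(\overline{\dec}_{\mathscr{F}_{t}}A)$ gives $\overline{\dec}_{\mathscr{F}_{t}}(\overline{\co}_{\mathds{L}^{p}}A)\subset\overline{\co}_{\mathds{L}^{p}}(\overline{\dec}_{\mathscr{F}_{t}}A)$, while convexity of the decomposable hull of a convex set gives the reverse).

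There is, however, a genuine gap in (iii). Your argument requires that the weak limits $\eta_{\infty}$, $\theta_{\infty}$ lie in $\overline{\dec}_{\mathscr{F}_{t}}A$ and $\overline{\dec}_{\mathscr{F}_{t}}B$, and you justify this by asserting that these hulls are convex ``in view of the argument for (ii).'' They are not: (ii) only shows that the decomposable hull of a \emph{convex} set is convex, and that the convex hull of a decomposable set is decomposable. A norm-closed bounded decomposable set is in general neither convex nor weakly sequentially closed; e.g.\ for $A=\{0,1\}\subset\mathds{L}^{p}([0,1])$ one has $\overline{\dec}A=\{\mathds{1}_{E}\}$, and $\mathds{1}_{E_{n}}\rightharpoonup\tfrac12\notin\overline{\dec}A$ for suitable $E_{n}$. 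So Mazur's lemma is unavailable and the weak limit may escape the hull (its weak closure is the selector set of the \emph{convexified} multifunction). The standard repair is to bypass weak compactness for the membership step: the exact identity $\dec_{\mathscr{F}_{t}}(A+B)=\dec_{\mathscr{F}_{t}}A+\dec_{\mathscr{F}_{t}}B$ follows from common refinements in both directions, and under the assumed boundedness the associated multifunctions $X,Y$ are integrably bounded, hence compact-valued, so $X+Y$ is closed-valued and the Hiai--Umegaki/Filippov-type selection identity $S^{p}_{\mathscr{F}_{t}}(X+Y)=S^{p}_{\mathscr{F}_{t}}(X)+S^{p}_{\mathscr{F}_{t}}(Y)$ yields that $\overline{\dec}_{\mathscr{F}_{t}}A+\overline{\dec}_{\mathscr{F}_{t}}B$ is already closed and equals $\overline{\dec}_{\mathscr{F}_{t}}(A+B)$; Theorem \ref{thm:1} then gives the $\PP$-a.s.\ equality.
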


\medskip

\section{Indefinite integrals}\label{sec:5}

With Definition \ref{def:2} and Definition \ref{def:3} of the four types of set-valued convoluted stochastic integrals (as integral functionals) at fixed time points, we now consider the integrals at indefinite time points, i.e., as set-valued stochastic processes.

\begin{definition}\label{def:4}
In the order of $q\in\{1,2,3,4\}$, for a suitable kernel $g$, the indefinite set-valued stochastic integrals corresponding to Definition \ref{def:3} are defined to be the time-indexed collections
\begin{align*}
  \int^{\cdot}_{0}g(\cdot,s)K(s-)\dd s&\equiv\bigg(\int^{t}_{0}g(t,s)K(s-)\dd s\bigg)_{t\in[0,T]},\\
  \int^{\cdot}_{0}g(\cdot,s)K(s-)\dd W_{s}&\equiv\bigg(\int^{t}_{0}g(t,s)K(s-)\dd W_{s}\bigg)_{t\in[0,T]},\\
  \int^{\cdot}_{0}\int_{\|z\|\geq1}g(\cdot,s)K(s-,z)N(\dd s,\dd z)&\equiv\bigg(\int^{t}_{0}\int_{\|z\|\geq1}g(t,s)K(s-,z)N(\dd s,\dd z)\bigg)_{t\in[0,T]},\\
  \int^{\cdot}_{0}\int_{0<\|z\|<1}g(\cdot,s)K(s-,z)\tilde{N}(\dd s,\dd z)&\equiv\bigg(\int^{t}_{0}\int_{0<\|z\|<1}g(t,s)K(s-,z)\tilde{N}(\dd s,\dd z)\bigg)_{t\in[0,T]}.
\end{align*}
\end{definition}

Unlike their non-convoluted counterparts (with $g\equiv1$), set-valued convoluted stochastic integrals do not belong to the category of set-valued semimartingales (compare the general construction in [Malinowski, 2013] \cite{M1}). As in the single-valued case, for nonempty subsets $K\subset\breve{\mathds{L}}^{p}\big(D_{q}\times\Omega,\mathcal{P}(\mathbb{F}),M_{q}\times\PP;E_{q}\big)$, the integrals exhibit long-range dependence if the kernel $g$ satisfies $\lim_{s\nearrow t}g(t,s)=0$, $\forall t\in(0,T]$ or short-range dependence if $\lim_{s\nearrow t}g(t,s)=\pm\infty$, or $g$ is singular. Indeed, taking $q=2$ for instance, suppose that there exists a square-integrable process $h\in K$ such that $h\neq\0$, $\mathrm{Leb}_{[0,T]}\times\PP$-a.e.; then, one has
\begin{align*}
  \mathrm{Cov}\bigg(\int^{t}_{0}g(t,s)h(s-)\dd W_{s},\int^{t+u}_{0}g(t+u,s)h(s-)\dd W_{s}\bigg)&=\int^{t}_{0}g(t,s)g(t+u,s)h^{2}(s-)\dd s\\
  &=\mathrm{Var}\int^{t}_{0}g(t,s)h(s-)\dd W_{s}+O(u^{\varpi}),\quad u>0,
\end{align*}
where $\varpi\in(0,1)$ and $\varpi>1$ for $\lim_{s\nearrow t}g(t,s)=\pm\infty$ and $\lim_{s\nearrow t}g(t,s)=0$, $\forall t$, respectively.

Nonetheless, in the case of a singular kernel $g$ (as is used for short-range dependence), as long as $\nu(\mathds{R}^{d}\setminus\{\0\})\neq0$, there is a risk that the indefinite integrals with respect to the Poisson random measure are explosive even if the integrand $K$ is a bounded subset. The following theorem concretizes this aspect.

\begin{theorem}\label{thm:6}
Let $K$ be a nonempty closed bounded subset of the $\breve{\mathds{L}}^{p}\big([0,T]\times(\mathds{R}^{d}\setminus\{\0\})\times\Omega,\mathcal{P}(\mathbb{F}), \mathrm{Leb}_{[0,T]}\times\nu\upharpoonright_{\{\|z\|\geq1\}}\times\PP;\mathds{R}^{d}\big)$ and the $\breve{\mathds{L}}^{p}\big([0,T]\times(\mathds{R}^{d}\setminus\{\0\})\times\Omega,\mathcal{P}(\mathbb{F}), \mathrm{Leb}_{[0,T]}\times\nu\upharpoonright_{\{0<\|z\|<1\}}\times\PP;\mathds{R}^{d}\big)$ spaces, respectively, and assume that there exists $h\in K$ with $h\neq\0$ a.e. Then, if the kernel $g$ satisfies the singularity property that $\lim_{s\nearrow t}|g(t,s)|=\infty$ for every $t\in(0,T]$, it happens with positive probability that there exists time $t\in(0,T]$ such that, respectively, $\mathbf{d}_{\rm H}\big(\int^{t}_{0}\int_{\|z\|\geq1}g(t,s)K(s-,z)N(\dd s,\dd z),\{\0\}\big)=\infty$ and $\mathbf{d}_{\rm H}\big(\int^{t}_{0}\int_{0<\|z\|<1}g(t,s)K(s-,z)\tilde{N}(\dd s,\dd z),\{\0\}\big)=\infty$.
\end{theorem}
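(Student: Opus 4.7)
The plan is to exploit the interaction between the diagonal singularity of $g$ and the jumps of $N$. Since any selector of the set-valued integral $X_q(t)$ bounds $\mathbf{d}_{\rm H}(X_q(t),\{\0\})$ from below, it suffices to produce a single $h\in K$ whose (single-valued) integral $I^{(q,g)}_{0,t}(h)$ has infinite norm at some random time $t\in(0,T]$, on an event of positive probability. I would fix such an $h\neq\0$ a.e.\ (which exists by hypothesis) throughout.

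For $q=3$, the relevant restriction $\nu\upharpoonright_{\{\|z\|\geq1\}}$ is a finite measure, and unless it is trivial (in which case the integral is identically $\0$), the first jump time $\tau$ of $N$ in $\{\|z\|\geq1\}$ satisfies $\PP(\tau<T)>0$. Using that $h\neq\0$, $\mathrm{Leb}_{[0,T]}\times\nu\times\PP$-a.e., together with the independence between jump times and the predictable structure of $h$, one shows that the refined event $A=\{\tau<T,\,h(\tau-,\Delta\xi_{\tau})\neq\0\}$ still has $\PP(A)>0$. On $A$, take $t=\tau$ with the Poisson convention $\int^{\tau}_{0}=\int_{(0,\tau]}$: the single jump at $s=\tau$ contributes $g(\tau,\tau-)h(\tau-,\Delta\xi_{\tau})$ with $|g(\tau,\tau-)|=\lim_{s\nearrow\tau}|g(\tau,s)|=\infty$ by the singularity hypothesis, so $\|I^{(3,g)}_{0,\tau}(h)\|=\infty$. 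Alternatively, for a stricter reading one can take $t_{n}=\tau+1/n$ and observe $|g(t_{n},\tau)|\to\infty$ while the remaining finite sum of jump contributions stays bounded, establishing that the integral takes the extended vector value $\infty$ in the limit along a deterministic descending sequence of times.

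For $q=4$, the argument is parallel but the compensation and possible infinite activity must be handled. I would fix $\epsilon>0$ small enough that $\nu(\{\epsilon\leq\|z\|<1\})>0$ (which is possible whenever the relevant restriction is nontrivial), and decompose $\tilde{N}=\tilde{N}_{\epsilon}^{\rm fin}+\tilde{N}_{\epsilon}^{\rm inf}$ on $\{0<\|z\|<1\}=\{\epsilon\leq\|z\|<1\}\sqcup\{0<\|z\|<\epsilon\}$. The finite-activity piece can be written as a pure Poisson integral minus its Lebesgue--Stieltjes compensator $\int^{t}_{0}g(t,s)\int_{\epsilon\leq\|z\|<1}h(s-,z)\nu(\dd z)\dd s$; the compensator is a.s.\ finite because $g(t,\cdot)\in\mathds{L}^{2}([0,t))\subset\mathds{L}^{1}([0,t))$ and the inner integral is controlled by $h\in\breve{\mathds{L}}^{2}$. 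The pure Poisson part, by the $q=3$ argument applied to the first jump $\tau_{\epsilon}$ in $\{\epsilon\leq\|z\|<1\}$, contributes $\infty$ at $t=\tau_{\epsilon}$ on a positive-probability event. Meanwhile, $\tilde{N}_{\epsilon}^{\rm inf}$-driven contribution is $\mathds{L}^{2}$-bounded via the L\'{e}vy--It\^{o} isometry $\E\|\int^{t}_{0}\int_{0<\|z\|<\epsilon}g(t,s)h(s-,z)\tilde{N}(\dd s,\dd z)\|^{2}=\E\int^{t}_{0}\int_{0<\|z\|<\epsilon}g^{2}(t,s)\|h\|^{2}\nu(\dd z)\dd s<\infty$, hence a.s.\ finite. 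The full integral at $t=\tau_{\epsilon}$ is therefore dominated by the infinite contribution.

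The main obstacle is the $q=4$ case when $\nu$ has no atomic mass away from $\0$: one must argue that the ``finite-activity-at-scale-$\epsilon$'' Poisson piece actually produces a jump in $(0,T)$ on a positive event for a well-chosen $\epsilon$, and that the in-and-out decomposition $\tilde{N}=\tilde{N}_{\epsilon}^{\rm fin}+\tilde{N}_{\epsilon}^{\rm inf}$ preserves the set-valued identification in Definition \ref{def:3}. A secondary subtlety is interpretational: the identity $\|I^{(q,g)}_{0,\tau}(h)\|=\infty$ should be taken in the extended-value sense (matching the paper's ``extended vector values'' language), which can be rigorously justified either by the left-limit convention $g(\tau,\tau-)=\infty$ or by passing to a descending sequence $t_{n}\searrow\tau$, both yielding $\mathbf{d}_{\rm H}(X_{q}(t),\{\0\})=\infty$ for a random time $t\in(0,T]$ on a positive-probability event.
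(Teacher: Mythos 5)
Your proposal is correct and follows essentially the same route as the paper: bound $\mathbf{d}_{\rm H}$ from below by the integral of a single selector $h\neq\0$, and observe that at a jump time (which occurs with positive probability, and at which $h$ is a.s.\ nonzero by the compensation formula) the singular factor $g(t,t-)$ multiplies the jump to produce an infinite norm. The paper only writes out $q=3$ and declares $q=4$ ``similar,'' whereas you supply the missing $\epsilon$-truncation of $\tilde{N}$ into a finite-activity Poisson part (which blows up), an a.s.\ finite compensator, and an $\mathds{L}^{2}$-finite small-jump remainder --- a correct and slightly more complete account of that case.
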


\begin{proof}
We only consider the case $q=3$; the case $q=4$ can be proved in a similar way. Take $p=1$ without loss of generality.

We observe that for every $t\in(0,T]$,
\begin{align}\label{5.1}
  &\quad\int^{t}_{0}\int_{\|z\|\geq1}g(t,s)h(s-,z)N(\dd s,\dd z)-\int^{t-}_{0}\int_{\|z\|\geq1}g(t-,s)h(s-,z)N(\dd s,\dd z) \nonumber\\
  &=\int^{t-}_{0}\int_{\|z\|\geq1}(g(t,s)-g(t-,s))h(s-,z)N(\dd s,\dd z)+\int^{t}_{t-}\int_{\|z\|\geq1}g(t,s)h(s-,z)N(\dd s,\dd z).
\end{align}
By the continuity of $g:\{(t,s):t\in(0,T),s\in(0,t)\}\mapsto\mathds{R}$ (Definition \ref{def:1}) it is clear that the first (limiting) integral vanishes $\PP$-\text{a.s.} (by (\ref{2.3})), while for the second note that
\begin{equation}\label{5.2}
  \bigg\|\int^{t}_{t-}\int_{\|z\|\geq1}g(t,s)h(s-,z)N(\dd s,\dd z)\bigg\|=|g(t,t-)|\bigg\|\int_{\|z\|\geq1}h(t-,z)N(\{t\},\dd z)\bigg\|.
\end{equation}

As long as $\nu(\mathds{R}^{d}\setminus\{\0\})\neq0$, one familiarly has that $\PP\big\{\int_{\|z\|\geq1}h(t-,z)N(\{t\},\dd z)>0,\exists t\in[0,T]\big\}>0$, but by (\ref{5.1}) and (\ref{5.2}) along with the stated singularity property this means exactly that the probability of
\begin{equation*}
  \bigg\{\bigg\|\int^{t}_{0}\int_{\|z\|\geq1}g(t,s)h(s-,z)N(\dd s,\dd z)\bigg\|=\infty,\;\exists t\in(0,T]\bigg\}
\end{equation*}
is positive, as $|g(t,t-)|=\infty$ $\forall t$. In consequence, let $t$ be the time at which the last result holds, which has positive probability. Then we have
\begin{equation*}
  \mathbf{d}_{\rm H}\bigg(\int^{t}_{0}\int_{\|z\|\geq1}g(t,s)K(s-,z)N(\dd s,\dd z),\{\0\}\bigg)\geq\bigg\|\int^{t}_{0}\int_{\|z\|\geq1}g(t,s)h(s-,z)N(\dd s,\dd z)\bigg\|=\infty,
\end{equation*}
as desired.
\end{proof}

Immediately following Theorem \ref{thm:6} is the next corollary which is of more interest for purely discontinuous models.

\begin{corollary}\label{cor:1}
Assuming the setting of Theorem \ref{thm:6}, if the kernel $g$ satisfies the singularity property that $\lim_{s\nearrow t}|g(t,s)|=\infty$, for every $t\in(0,T]$, and $\nu(\{0<\|z\|<1\})=\infty$, then, $\PP$-a.s., there exists time $t\in(0,T]$ such that $\mathbf{d}_{\rm H}\big(\int^{t}_{0}\int_{0<\|z\|<1}g(t,s)K(s-,z)\tilde{N}(\dd s,\dd z),\{\0\}\big)=\infty$.
\end{corollary}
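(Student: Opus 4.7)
\noindent\textit{Proof proposal.}\quad The plan is to upgrade the positive-probability conclusion of Theorem~\ref{thm:6} to a $\PP$-almost-sure one by exploiting the infinite-activity hypothesis $\nu(\{0<\|z\|<1\})=\infty$. Following the selection used in Theorem~\ref{thm:6}, I fix $h\in K$ with $h\neq\0$ a.e., write $M_{t}:=\int^{t}_{0}\int_{0<\|z\|<1}g(t,s)h(s-,z)\tilde{N}(\dd s,\dd z)$, and observe that since $h\in K$,
\begin{equation*}
  \mathbf{d}_{\rm H}\bigg(\int^{t}_{0}\int_{0<\|z\|<1}g(t,s)K(s-,z)\tilde{N}(\dd s,\dd z),\{\0\}\bigg)\geq\|M_{t}\|,
\end{equation*}
so it suffices to exhibit, $\PP$-a.s., some $t\in(0,T]$ at which $\|M_{t}\|=\infty$.

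The first step uses the compensator formula to force the jumps of $N$ away from the zero set of $h$. Applying the predictable indicator $\mathds{1}_{\{h(s-,z)=\0\}}$ against $N$ yields
\begin{equation*}
  \E\int^{T}_{0}\int_{0<\|z\|<1}\mathds{1}_{\{h(s-,z)=\0\}}N(\dd s,\dd z)=\E\int^{T}_{0}\int_{0<\|z\|<1}\mathds{1}_{\{h(s-,z)=\0\}}\nu(\dd z)\dd s=0,
\end{equation*}
so $\PP$-a.s.\ every jump $(\tau,Z_{\tau})$ of $N\upharpoonright_{[0,T]\times\{0<\|z\|<1\}}$ satisfies $h(\tau-,Z_{\tau})\neq\0$. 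The hypothesis $\nu(\{0<\|z\|<1\})=\infty$ in parallel guarantees that $N\upharpoonright_{[0,T]\times\{0<\|z\|<1\}}$ has $\PP$-a.s.\ infinitely many jump times in $(0,T]$; combining these two facts singles out a $\PP$-a.s.\ defined random time $\tau^{\ast}\in(0,T]$ that is a small jump of $N$ with $h(\tau^{\ast}-,Z_{\tau^{\ast}})\neq\0$.

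The second step evaluates $M_{\tau^{\ast}}-M_{\tau^{\ast}-}$ via the decomposition already used in the proof of Theorem~\ref{thm:6}:
\begin{align*}
  M_{\tau^{\ast}}-M_{\tau^{\ast}-}&=\int^{\tau^{\ast}-}_{0}\int_{0<\|z\|<1}(g(\tau^{\ast},s)-g(\tau^{\ast}-,s))h(s-,z)\tilde{N}(\dd s,\dd z) \\
  &\qquad+\int^{\tau^{\ast}}_{\tau^{\ast}-}\int_{0<\|z\|<1}g(\tau^{\ast},s)h(s-,z)\tilde{N}(\dd s,\dd z).
\end{align*}
Continuity of $g$ on its interior (Definition~\ref{def:1}(iii)) with dominated convergence (using $g(\cdot,s)\in\mathds{L}^{2}$ and $h\in\breve{\mathds{L}}^{2}$) makes the first term vanish $\PP$-a.s. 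For the second, the observation—absent in Theorem~\ref{thm:6}—is that the compensator $\nu(\dd z)\dd s$ puts no mass on $\{\tau^{\ast}\}\times(\mathds{R}^{d}\setminus\{\0\})$, so the $\tilde{N}$-piece collapses to the $N$-piece $g(\tau^{\ast},\tau^{\ast}-)h(\tau^{\ast}-,Z_{\tau^{\ast}})$, whose norm equals $|g(\tau^{\ast},\tau^{\ast}-)|\|h(\tau^{\ast}-,Z_{\tau^{\ast}})\|=\infty$ by the singularity of $g$ together with the construction of $\tau^{\ast}$. Consequently at least one of $\|M_{\tau^{\ast}}\|$ and $\|M_{\tau^{\ast}-}\|$ is infinite, and the corollary follows.

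The main obstacle I anticipate is the first step: one must pass from an $(\mathrm{Leb}_{[0,T]}\times\nu\upharpoonright_{\{0<\|z\|<1\}}\times\PP)$-a.e.\ statement about $h$ to a pathwise statement that the jumps of $N$ avoid the zero set of $h$, and to do so $\PP$-almost surely (not merely with positive probability) and uniformly across the infinitely many jump points produced by the infinite activity. The compensator-formula computation above accomplishes this cleanly because $\mathds{1}_{\{h(s-,z)=\0\}}$ is predictable and its $\nu(\dd z)\dd s$-integral vanishes; once this step is in place, the subsequent singularity analysis at $\tau^{\ast}$ is essentially a direct transcription of the proof of Theorem~\ref{thm:6}, with the crucial observation that the $\nu(\dd z)\dd s$ compensator is time-continuous and therefore cannot mask the explosive jump.
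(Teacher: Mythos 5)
Your proposal is correct and follows essentially the same route as the paper: the paper's proof of Corollary~\ref{cor:1} likewise observes that $\nu(\{0<\|z\|<1\})=\infty$ forces infinitely many jumps on $(0,T]$ almost surely, so the positive-probability event in Theorem~\ref{thm:6} (a jump time at which $h\neq\0$, where the singularity of $g$ produces an infinite value) occurs with probability one. Your compensator-formula argument that the jumps of $N$ almost surely avoid the zero set of $h$ is a careful justification of a step the paper leaves implicit, but it does not change the underlying approach.
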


\begin{proof}
It suffices to note that with $\nu$ being infinite the process $\int^{\cdot}_{0}\int_{0<\|z\|<1}h(s-,z) N(\dd s,\dd z)$ has infinitely many jumps over $(0,T]$, and hence $\PP\big\{\|\int_{0<\|z\|<1}h(t-,z)\tilde{N}(\{t\},\dd z)\|>0,\exists t\in(0,T]\big\}=1$.
\end{proof}

We now give two examples in $d=1$ dimension to illustrate the defined set-valued convoluted stochastic integrals alongside mentions of potential applications.

\medskip

\textsl{Example 6.}\quad Let $g$ be the Riemann--Liouville kernel (refer to Example 2), and set $K_{1}=S^{1}_{\mathcal{P}(\mathbb{F})}([\mu_{1},\mu_{2}])$, $K_{2}=\overline{\co}_{\mathds{L}^{2}}\{\sigma_{1},\sigma_{2}\}$ (which is not decomposable), $K_{3}=\{z\}$ and $K_{4}=\{0\}$, for additional parameters $\mu_{2}>0>\mu_{1}$ and $\sigma_{2}>\sigma_{1}>0$. Consider the set-valued process
\begin{equation*}
  X^{(g)}_{t}=\cl_{\mathds{L}^{1}}\bigg(X_{0}+\int^{t}_{0}[\mu_{1},\mu_{2}]\dd s+\int^{t}_{0}\frac{(t-s)^{\beta-1}}{\Gf(\beta)}\bigg(\overline{\co}_{\mathds{L}^{2}}\{\sigma_{1},\sigma_{2}\}\dd W_{s}+\int_{\mathds{R}\setminus\{0\}}zN(\dd s,\dd z)\bigg)\bigg),\quad t\in(0,T],
\end{equation*}
where $X_{0}\in\mathrm{Cl}(\mathds{R})$ and $N$ is of finite activity. In this case, the (non-convoluted) Aumann integral is in the set-valued sense (Definition \ref{def:3}), the (convoluted) It\^{o} integral is in the quasi-set-valued sense that it equals $[\sigma_{1},\sigma_{2}]\times\int^{t}_{0}(t-s)^{\beta-1}/\Gf(\beta)\dd W_{s}$, and the integral with respect to the Poisson random measure is single-valued. With this construction, $X^{(g)}_{t}$ is integrably bounded\footnote{For the It\^{o} integral, if the integrand is a finite set it is clearly integrably bounded, and taking the closed convex hull will indeed not affect such boundedness; details for the general case can be found in the proof of Theorem \ref{thm:7}.} for every $t\in[0,T]$. Besides, if $\nu\not\equiv0$ and $\beta<1$ the conditions in Theorem \ref{thm:6} hold and the sample paths of $X^{(g)}$ are explosive over time with probability in $(0,1)$. For instance, $X^{(g)}$ can be used to model the dynamics of returns from a risky asset which allows ambiguity in the persistence (drift) and volatility, incorporates jumps, and is also able to capture short- or long-term memory depending on the value of $\beta$.

\medskip

\textsl{Example 7.}\quad Let $g$ be the product of the exponential kernel and the Riemann--Liouville kernel (see Example 1 and Example 2) and set $K_{1}=S^{1}_{\mathcal{P}(\mathbb{F})}([\mu_{1},\mu_{2}])$, $K_{2}=\{0\}$, $K_{3}=S^{1}_{\mathcal{P}(\mathbb{F})}([\gamma_{1},\gamma_{2}])$, and $K_{4}=\overline{\co}_{\mathds{L}^{2}}\{\gamma_{1},\gamma_{2}\}$ (which is not decomposable) for additional parameters $\mu_{2}>0>\mu_{1}$ and $\gamma_{2}>\gamma_{1}>0$. Then, consider the set-valued process\footnote{In general, the Minkowski sum of set-valued stochastic integrals cannot be coalesced into the integral against the usual sum of integrators; a complementation procedure will be necessary for that purpose; see [Malinkowski, 2013, Theorem 3.3] \cite{M1}.}
\begin{align*}
  X^{(g)}_{t}&=\cl_{\mathds{L}^{1}}\bigg(X_{0}+\int^{t}_{0}\frac{e^{-\kappa(t-s)}(t-s)^{\beta-1}}{\Gf(\beta)}[\mu_{1},\mu_{2}]\dd s+\int^{t}_{0}\int_{|z|\geq1}\frac{e^{-\kappa(t-s)}(t-s)^{\beta-1}}{\Gf(\beta)}[\gamma_{1},\gamma_{2}]zN(\dd s,\dd z)\\
  &\qquad+\int^{t}_{0}\int_{0<|z|<1}\frac{e^{-\kappa(t-s)}(t-s)^{\beta-1}}{\Gf(\beta)} \overline{\co}_{\mathds{L}^{2}}\{\gamma_{1},\gamma_{2}\}z\tilde{N}(\dd s,\dd z)\bigg),\quad t\in(0,T],
\end{align*}
where $X_{0}\in\mathrm{Cl}(\mathds{R})$ but $N$ has infinite variation. In this case both the Aumann integral and the integral with respect to the Poisson random measure are convoluted and in the set-valued sense (Definition \ref{def:3}), whereas the integral with respect to the compensated measure (also convoluted) is in the quasi-set-valued sense. This guarantees that $X^{(g)}_{t}$ is integrably bounded for every $t\in[0,T]$. Also, if $\beta<1$, then by Corollary \ref{cor:1} the sample paths of $X^{(g)}$ are explosive over $(0,T]$ with probability 1. In practice, such a process can be used to model the microstructure of log-volatility allowing the reversion level and the jump volatility to be ambiguous, while also capturing potential short-range dependence (if $\beta<1$). However, due to the path explosiveness simulation techniques need to be handled with care.

\medskip

\section{Monotone constructions}\label{sec:6}

In some financial applications of set-valued processes, such as the range of asset prices with model uncertainty (recall Section \ref{sec:1} or [Liang and Ma, 2020] \cite{LM}) and monotonically changing indecisiveness (e.g., see [Xia, 2023+, Section 2] \cite{X}), it is a priori known that the capacity of the sets is a monotone function of time. Instead of contemplating the subset $K$ in order to achieve such effects, we consider two monotone operators acting on the indefinite integrals in Definition \ref{def:4}, which are useful for directly constructing increasing or decreasing processes from convoluted stochastic integral dynamics.

In this section, we consider a set-valued process $X^{(g)}$ of the following general form:
\begin{align}\label{6.1}
  X^{(g)}_{t}&=\cl_{\mathds{L}^{1}}\bigg(X_{0}+\int^{t}_{0}g(t,s)K_{1}(s-)\dd s+\int^{t}_{0}g(t,s)\overline{\co}_{\mathds{L}^{2}}K_{2}(s-)\dd W_{s} \nonumber\\
  &\qquad+\int^{t}_{0}\int_{\|z\|\geq1}g(t,s)K_{3}(s-,z)N(\dd s,\dd z)+\int^{t}_{0}\int_{0<\|z\|<1}g(t,s)\overline{\co}_{\mathds{L}^{2}}K_{4}(s-,z)\tilde{N}(\dd s,\dd z)\bigg),
\end{align}
where $K_{1}$ and $K_{3}$ are closed convex decomposable subsets\footnote{This will ensure that the associated set-valued integrand processes are convex-valued (in $E_{q}$), because in that case their measurable selectors form convex subsets (of $\mathds{L}^{p}$). See, e.g., [Kisielewicz, 2020b, Chapter II Corollary 2.3.3] \cite{K8}.} of the corresponding $\breve{\mathds{L}}^{1}$ spaces ($p=1$ now), while $K_{2}$ and $K_{4}$ are finite (i.e., $1\leq\mathrm{card}K<\aleph$, hence non-decomposable) subsets of the corresponding $\breve{\mathds{L}}^{2}$ spaces (refer to Definition \ref{def:2} and Definition \ref{def:3}), and $N$ is possibly of infinite variation.

Based on (\ref{6.1}), we define two processes
\begin{equation*}
  X^{(g)\downarrow}_{t}(\omega):=\bigcap_{s\in[0,\tau(\omega)\wedge t]}X^{(g)}_{s}(\omega),\quad(t,\omega)\in[0,T]\times\Omega,
\end{equation*}
where
\begin{equation*}
  \tau(\omega):=\inf\Bigg\{t\in[0,T]:\mathrm{card}\bigcap_{s\in[0,t]}X^{(g)}_{s}(\omega)=1\Bigg\},\quad\omega\in\Omega,
\end{equation*}
is the first time when the intersection yields a singleton (of $\mathds{R}^{d}$), and
\begin{equation*}
  X^{(g)\uparrow}_{t}(\omega):=\overline{\co}\bigcup_{s\in[0,t]}X^{(g)}_{s}(\omega),\quad(t,\omega)\in[0,T]\times\Omega.
\end{equation*}
By construction $X^{(g)\downarrow}$ and $X^{(g)\uparrow}$ are set-decreasing and set-increasing processes, respectively, in the sense that $X^{(g)\downarrow}_{s}\supset X^{(g)\downarrow}_{t}$ and $X^{(g)\uparrow}_{s}\subset X^{(g)\uparrow}_{t}$ for any $0\leq s\leq t\leq T$.

To study the integrability and explosiveness of $X^{(g)\downarrow}$ and $X^{(g)\uparrow}$, we will need the following lemma, which is a result of the separability of the probability space.

\begin{lemma}\label{lem:5}
Let $0\leq t_{0}<t\leq T$ be fixed. For $q\in\{1,2,3,4\}$, let $K_{q}\in\mathcal{S}(\breve{\mathds{L}}^{p}(D_{q}\times\Omega,\mathcal{P}(\mathbb{F}),M_{q}\times\PP;E_{q}))$ and let $g$ be a suitable kernel. Then, there is a sequence $\{f_{q,n}:n\in\mathds{N}_{++}\}\subset K_{q}$ such that the following hold $\PP$-a.s.:\\
$\int^{t}_{t_{0}}g(t,s)K_{1}(s-)\dd s=\cl\big\{\int^{t}_{t_{0}}g(t,s)f_{1,n}(s-)\dd s:n\in\mathds{N}_{++}\big\}$;\\ $\int^{t}_{t_{0}}g(t,s)K_{2}(s-)\dd W_{s}=\cl\big\{\int^{t}_{t_{0}}g(t,s)f_{2,n}(s-)\dd W_{s}:n\in\mathds{N}_{++}\big\}$;\\ $\int^{t}_{t_{0}}\int_{\|z\|\geq1}g(t,s)K_{3}(s-,z)N(\dd s,\dd z)=\cl\big\{\int^{t}_{t_{0}}\int_{\|z\|\geq1}g(t,s)f_{3,n}(s-,z)N(\dd s,\dd z):n\in\mathds{N}_{++}\big\}$;\\
$\int^{t}_{t_{0}}\int_{0<\|z\|<1}g(t,s)K_{4}(s-,z)\tilde{N}(\dd s,\dd z)=\cl\big\{\int^{t}_{t_{0}}\int_{0<\|z\|<1}g(t,s)f_{4,n}(s-,z)\tilde{N}(\dd s,\dd z):n\in\mathds{N}_{++}\big\}$.
\end{lemma}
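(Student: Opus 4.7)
The plan is to reduce the claimed pointwise identity to an equality of closed decomposable hulls in the appropriate $\mathds{L}^{p}$ space, chaining Theorem \ref{thm:2}(iv), the Hiai--Umegaki correspondence (Theorem \ref{thm:1}), and Lemma \ref{lem:1}. All four cases $q\in\{1,2,3,4\}$ are argued identically, only the integrator changing, so I would describe a single unified proof.

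First I would apply Theorem \ref{thm:2}(iv) to each $K_{q}$ to extract a countable sequence $\{f_{q,n}\}_{n\in\mathds{N}_{++}}\subset K_{q}$ satisfying $\cl_{\mathds{L}^{p}}\{I^{(q,g)}_{t_{0},t}(f_{q,n}):n\in\mathds{N}_{++}\}=\cl_{\mathds{L}^{p}}I^{(q,g)}_{t_{0},t}(K_{q})$, and set $Y_{q}(\omega):=\cl\{I^{(q,g)}_{t_{0},t}(f_{q,n})(\omega):n\in\mathds{N}_{++}\}$, closure taken in the ambient Euclidean space; $Y_{q}$ is $\mathscr{F}_{t}$-measurable as the closure of a countable family of random vectors. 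Let $X_{q}$ denote the random set on the left-hand side of the claim, so that $S^{p}_{\mathscr{F}_{t}}(X_{q})=\overline{\dec}_{\mathscr{F}_{t}}I^{(q,g)}_{t_{0},t}(K_{q})$ by Definition \ref{def:3}. The target is then $Y_{q}=X_{q}$, $\PP$-a.s.

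The easy inclusion $Y_{q}\subset X_{q}$ follows because each $I^{(q,g)}_{t_{0},t}(f_{q,n})\in I^{(q,g)}_{t_{0},t}(K_{q})\subset S^{p}_{\mathscr{F}_{t}}(X_{q})$ forces $I^{(q,g)}_{t_{0},t}(f_{q,n})(\omega)\in X_{q}(\omega)$ off a null set; pooling the countably many null sets and exploiting closedness of $X_{q}(\omega)$ completes this direction. For $X_{q}\subset Y_{q}$, I would invoke Lemma \ref{lem:1} to write $X_{q}=\cl\{h_{m}:m\in\mathds{N}_{++}\}$ a.s.\ for selectors $h_{m}\in\overline{\dec}_{\mathscr{F}_{t}}I^{(q,g)}_{t_{0},t}(K_{q})$, approximate each $h_{m}$ in $\mathds{L}^{p}$ by a finite gluing $\eta_{m,k}=\sum_{j}\mathds{1}_{A_{m,k,j}}I^{(q,g)}_{t_{0},t}(\phi_{m,k,j})$ with $\phi_{m,k,j}\in K_{q}$ and an $\mathscr{F}_{t}$-partition $\{A_{m,k,j}\}_{j}$, and finally invoke Theorem \ref{thm:2}(iv) once more to $\mathds{L}^{p}$-approximate each $I^{(q,g)}_{t_{0},t}(\phi_{m,k,j})$ by a sub-sequence of $\{I^{(q,g)}_{t_{0},t}(f_{q,n})\}$. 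Extracting almost-surely convergent sub-subsequences along this countably indexed triple array yields $I^{(q,g)}_{t_{0},t}(\phi_{m,k,j})(\omega)\in Y_{q}(\omega)$ a.s., and since pointwise $\eta_{m,k}(\omega)$ coincides with exactly one of these values, $\eta_{m,k}(\omega)\in Y_{q}(\omega)$ a.s.; a final almost-surely convergent subsequence in $k$ then places $h_{m}(\omega)\in Y_{q}(\omega)$ for a.e.\ $\omega$, yielding the reverse inclusion.

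The main technical hurdle is the careful bookkeeping of null sets through the nested extraction of almost-surely convergent sub-sequences; it stays tractable precisely because every index set encountered is countable and the underlying probability space is separable, so all null sets can be pooled into a single common negligible event off which the claimed set-identity holds simultaneously for all four values of $q$.
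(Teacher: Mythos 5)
Your proposal is correct and follows essentially the same route as the paper: both start from Theorem \ref{thm:2}(iv) to extract the countable family $\{f_{q,n}\}$, pass to closed decomposable hulls, and then identify the pointwise closure $\cl\{I^{(q,g)}_{t_{0},t}(f_{q,n})(\omega)\}$ with the set-valued integral via the selector-set correspondence of Theorem \ref{thm:1} and Definition \ref{def:3}. The only difference is that the paper invokes the standard identity $S^{p}_{\mathscr{F}_{t}}(\cl\{u_{n}(\cdot)\})=\overline{\dec}_{\mathscr{F}_{t}}\{u_{n}\}$ as a known ``fundamental property of measurable selectors,'' whereas you re-derive it explicitly by the double inclusion with null-set bookkeeping; this is a matter of exposition, not of substance.
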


\begin{proof}
We only prove the case $q=4$; the others can be considered similarly. Take $p=2$. According to assertion (iv) of Theorem \ref{thm:2}, note that there is a sequence $\{f_{4,n}:n\in\mathds{N}_{++}\}$ of $K_{4}$ (shortened as $K$ hereinafter) such that $\cl_{\mathds{L}^{2}}I^{(4,g)}_{t_{0},t}(gK)=\cl_{\mathds{L}^{2}}\big\{I^{(4,g)}_{t_{0},t}(gf_{4,n}):n\in\mathds{N}_{++}\big\}$. This implies by the property of decomposable hulls that $\overline{\dec}_{\mathscr{F}_{t}}I^{(4,g)}_{t_{0},t}(gK)=\overline{\dec}_{\mathscr{F}_{t}}\big\{I^{(4,g)}_{t_{0},t}(gf_{4,n}):n\in\mathds{N}_{++}\big\}$. Then, we observe that $\cl_{\mathds{L}^{2}}\big\{I^{(4,g)}_{t_{0},t}(gf_{4,n}):n\in\mathds{N}_{++}\big\}$ constitutes a multifunction from $\Omega$ to $\mathrm{Cl}(\mathds{R}^{d})$ with integrable Hausdorff distance, and by Theorem \ref{thm:1} and Definition \ref{def:3} it follows that
\begin{equation*}
  S^{2}_{\mathscr{F}_{t}}\bigg(\int^{t}_{t_{0}}\int_{0<\|z\|<1}g(t,s)K(s-,z)\tilde{N}(\dd s,\dd z)\bigg)=S^{2}_{\mathscr{F}_{t}}\big(\cl_{\mathds{L}^{2}}\big\{I^{(4,g)}_{t_{0},t}(gf_{4,n}):n\in\mathds{N}_{++}\big\}\big),
\end{equation*}
from where we apply the fundamental property of the collection of measurable selectors to conclude.
\end{proof}

Using the monotone constructions, to require integrability convenience comes at the cost of two additional conditions, which however can be easily justified in practice. The next theorem gives the results.

\begin{theorem}\label{thm:7}
Based on (\ref{6.1}), the set-valued process $X^{(g)\downarrow}$ is $\mathbb{F}$-non-anticipating, integrably bounded (for fixed time), and has $\PP$-\text{a.s.} right-continuous sample paths. Furthermore, assuming that
\begin{equation}\label{6.2}
  \E\sup_{s\in[0,t]}\mathbf{d}_{\rm H}(K_{1}(s-),\{\0\})<\infty,\quad\E\sup_{s\in[0,t]}\int_{\|z\|\geq1}\mathbf{d}_{\rm H}(K_{3}(s-,z),\{\0\})\nu(\dd z)<\infty,\quad\forall t\in[0,T],
\end{equation}
if the kernel $g$ is nonsingular, then the same properties hold for the process $X^{(g)\uparrow}$.
\end{theorem}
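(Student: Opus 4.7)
The plan is to derive the three properties of $X^{(g)\downarrow}$ and $X^{(g)\uparrow}$ from the corresponding properties of $X^{(g)}$, which I establish first. Fixed-time integrable boundedness of $X^{(g)}_t$ follows from Theorem \ref{thm:3} applied to the Aumann and Poisson-$N$ pieces (bounded decomposable $K_1,K_3$, for which the $q=1,3$ cases are integrably bounded irrespective of variation); for the It\^o and compensated-Poisson pieces, finiteness of $K_2,K_4$ makes $\overline{\co}_{\mathds{L}^2}K_2,\overline{\co}_{\mathds{L}^2}K_4$ norm-bounded in $\breve{\mathds{L}}^2$, so Lemma \ref{lem:2} yields boundedness of the respective closed decomposable hulls in $\mathds{L}^2(\Omega,\mathscr{F}_t,\PP;\mathds{R}^d)$. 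Path right-continuity of $X^{(g)}$ in the Hausdorff metric reduces by Lemma \ref{lem:5} to right-continuity in $t$ of single-selector convoluted integrals, which follows from dominated convergence using $g(t,\cdot)\in\mathds{L}^2$ together with the continuity of $g$ granted by Definition \ref{def:1}(iii).

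For $X^{(g)\downarrow}$, the time $\tau$ is an $\mathbb{F}$-stopping time thanks to right-continuity of $X^{(g)}$, and hence $X^{(g)\downarrow}_t=\bigcap_{s\in\mathds{Q}\cap[0,\tau\wedge t]}X^{(g)}_s$ is $\mathscr{F}_t$-measurable via Lemma \ref{lem:5}. The inclusion $X^{(g)\downarrow}_t\subset X^{(g)}_0=\cl X_0$ yields $\mathbf{d}_{\rm H}(X^{(g)\downarrow}_t,\{\0\})\leq\mathbf{d}_{\rm H}(X_0,\{\0\})$, which is integrable since $X_0$ is integrably bounded by hypothesis. The decreasing structure of $X^{(g)\downarrow}$ gives $X^{(g)\downarrow}_{t+}=\bigcap_{u>0}X^{(g)\downarrow}_{t+u}$, and the extra intersections $\bigcap_{s\in(\tau\wedge t,\tau\wedge(t+u)]}X^{(g)}_s$ collapse onto $X^{(g)}_{\tau\wedge t}\supset X^{(g)\downarrow}_t$ as $u\searrow0$ by right-continuity of $X^{(g)}$, yielding $\PP$-a.s.\ right-continuity of the sample paths.

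For $X^{(g)\uparrow}$, nonsingularity of $g$ combined with Definition \ref{def:1}(iii) makes $g$ jointly continuous on the closed triangle $\{(t,s):0\leq s\leq t\leq T\}$, giving $G:=\sup|g|<\infty$. The essential step is the maximal estimate
\[
  \E\sup_{s\in[0,t]}\mathbf{d}_{\rm H}(X^{(g)}_s,\{\0\})<\infty,
\]
which immediately implies integrable boundedness of $X^{(g)\uparrow}_t$ through $\mathbf{d}_{\rm H}(X^{(g)\uparrow}_t,\{\0\})\leq\sup_{s\in[0,t]}\mathbf{d}_{\rm H}(X^{(g)}_s,\{\0\})$. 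For the drift and $N$-pieces, Fubini plus (\ref{6.2}) and the bound $G$ deliver this uniformly in $t$. For the It\^o and $\tilde{N}$-pieces, finiteness of $K_2,K_4$ reduces the supremum over selectors to a maximum over finitely many single-valued convoluted integrals; to control the sup in $t$ of each (these are not themselves martingales), I split $g(t,s)=g(s+,s)+\int_s^t\pd_u g(u,s)\,\dd u$ and apply stochastic Fubini to represent the convoluted integral as $g(s+,s)$ times a martingale plus a Lebesgue time-integral of a martingale family, after which Doob's $\mathds{L}^2$ maximal inequality closes the estimate. Adaptedness of $X^{(g)\uparrow}_t$ follows from Lemma \ref{lem:5} applied over the countable dense subset $\mathds{Q}\cap[0,t]$, and right-continuity from the identity $X^{(g)\uparrow}_{t+u}=\overline{\co}\bigl(X^{(g)\uparrow}_t\cup\bigcup_{s\in(t,t+u]}X^{(g)}_s\bigr)$ together with path right-continuity of $X^{(g)}$.

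The main obstacle is the maximal-in-$t$ estimate in the previous paragraph: because convoluted stochastic integrals fail to be semimartingales in the outer variable $t$, Doob's inequality cannot be invoked directly. The decomposition using the $C^1$ regularity of $g$ on $\{(t,s):t\in(0,T),s\in(0,t)\}$ and nonsingularity -- which together guarantee that both the diagonal value $g(s+,s)$ and the partial derivative $\pd_u g(u,s)$ are bounded on the closed triangle -- is exactly what permits reduction to standard martingale maximal inequalities, which is why nonsingularity of $g$ is indispensable in the hypotheses on $X^{(g)\uparrow}$.
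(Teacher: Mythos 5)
Your treatment of $X^{(g)\downarrow}$ (stopping-time argument for $\tau$, measurability via Lemma \ref{lem:5} over rational time points, the bound $\mathbf{d}_{\rm H}(X^{(g)\downarrow}_{t},\{\0\})\leq\mathbf{d}_{\rm H}(X_{0},\{\0\})$) and of the drift and $N$-pieces of the maximal estimate via (\ref{6.2}) tracks the paper's argument. The gap is in your handling of the It\^{o} and compensated-Poisson pieces. You split $g(t,s)=g(s+,s)+\int_{s}^{t}\pd_{u}g(u,s)\,\dd u$ and assert that nonsingularity together with Definition \ref{def:1}(iii) guarantees that $\pd_{u}g(u,s)$ is bounded on the closed triangle; this is false. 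Definition \ref{def:1}(iii) gives continuous differentiability only on the \emph{open} triangle, and nonsingularity only rules out $\lim_{s\nearrow t}|g(t,s)|=\infty$. The Riemann--Liouville kernel $g(t,s)=(t-s)^{\beta-1}/\Gf(\beta)$ with $\beta\in(1,2)$ --- nonsingular, and precisely the case invoked in Example 8 --- has $\pd_{t}g(t,s)\propto(t-s)^{\beta-2}\rightarrow\infty$ as $s\nearrow t$. Worse, for $\beta\in(1,3/2]$ one has $\int_{0}^{u}|\pd_{u}g(u,s)|^{2}\dd s=\infty$, so the stochastic Fubini interchange you invoke is not even well posed: the inner integrals $\int_{0}^{u}\pd_{u}g(u,s)h(s)\dd W_{s}$ fail to be defined in $\mathds{L}^{2}$, and the condition $\int_{0}^{T}\big(\int_{0}^{u}|\pd_{u}g(u,s)|^{2}\E\|h(s)\|^{2}\dd s\big)^{1/2}\dd u<\infty$ needed for the interchange fails. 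Your reduction to Doob's inequality therefore does not go through for a class of kernels the theorem is explicitly meant to cover; it would require an additional integrability hypothesis on $\pd_{u}g$ near the diagonal that is not among the theorem's assumptions.

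The paper avoids differentiating the kernel altogether. Using nonsingularity only to set $\bar{g}=\sup_{t\in[0,T]}\sup_{s\in[0,t)}|g(t,s)|<\infty$, it bounds $\mathbf{d}_{\rm H}\big(\int_{0}^{t}g(t,s)K_{2}(s-)\dd W_{s},\{\0\}\big)\leq\bar{g}\,\mathbf{d}_{\rm H}\big(\int_{0}^{t}K_{2}(s-)\dd W_{s},\{\0\}\big)$ and then exploits that $\mathbf{d}_{\rm H}\big(\int_{0}^{\cdot}K_{2}(s-)\dd W_{s},\{\0\}\big)$ --- the \emph{non-convoluted} integral --- is a square-integrable submartingale, to which Doob's maximal inequality applies directly (and likewise for $K_{4}$); the passage from $X^{(g)}$ to the four separate pieces in (\ref{6.4}) is mediated by Starr's corollary to the Shapley--Folkman theorem to control the non-convex finite integrands $K_{2},K_{4}$, a step your subadditivity argument also leaves implicit. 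Either repair your decomposition by adding and justifying the missing derivative bound, or replace it with the submartingale-plus-Doob route.
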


\begin{proof}
By Definition \ref{def:3}, each set-valued convoluted stochastic integral is $\mathbb{F}$-non-anticipating (not necessarily $\mathcal{P}(\mathbb{F})$-measurable). By assertion (ii) of Theorem \ref{thm:5}, we have that $X^{(g)}_{t}$ for every $t\in[0,T]$ must be compact convex-valued in $\mathds{R}^{d}$, $\PP$-a.s., because the integral functionals are bounded subsets. Applying Lemma \ref{lem:5}, then there is a sequence $\{\phi^{(g)}_{n}:n\in\mathds{Z}_{++}\}$ of $d$-dimensional $\mathbb{F}$-non-anticipating convoluted L\'{e}vy--It\^{o} processes valued in $\mathds{R}^{d}$ such that $X^{(g)}_{t}=\cl\{\phi^{(g)}_{n}(t):n\in\mathds{Z}_{++}\}$, $\PP$-a.s., for every $t\in[0,T]$. By the right-continuous paths of $\phi^{(g)}_{n}$'s, we can then write for every $t\in[0,T]$, $\PP$-a.s.,
\begin{equation}\label{6.3}
  X^{(g)\downarrow}_{t}=\bigcap_{s\in[0,\tau\wedge t]\cap\mathds{Q}}\cl\big\{\phi^{(g)}_{n}(s):n\in\mathds{Z}_{++}\big\},\quad X^{(g)\uparrow}_{t}=\overline{\co}\bigcup_{s\in[0,t]\cap\mathds{Q}}\cl\big\{\phi^{(g)}_{n}(s):n\in\mathds{Z}_{++}\big\}.
\end{equation}
Further, for $X^{(g)\downarrow}$ note that $\{\tau\leq t\}=\big\{\card\bigcap_{s\in[0,t]\cap\mathds{Q}}X^{(g)}_{s}\leq1\big\}\in\mathscr{F}_{t}$, so that $\tau$ is an $\mathbb{F}$-stopping time (by right-continuity), and on the other hand, we apply Carath\'{e}odory's theorem to $X^{(g)\uparrow}$, which with (\ref{6.3}) establishes the $\mathcal{B}([0,T])\otimes\mathcal{F}$-measurability of $X^{(g)\downarrow}$ and $X^{(g)\uparrow}$ (as processes). In the same spirit we establish from the adaptivity of $\phi^{(g)}_{n}$'s the $\mathscr{F}_{t}$-measurability of $X^{(g)\downarrow}_{t}$ and $X^{(g)\uparrow}_{t}$, for every $t\in[0,T]$. Therefore, $X^{(g)\downarrow}$ and $X^{(g)\uparrow}$ are both $\mathbb{F}$-non-anticipating processes.

Next, that $X^{(g)\downarrow}$ is integrably bounded is immediate from the fact that $\mathbf{d}_{\rm H}\big(X^{(g)\downarrow}_{t},\{\0\}\big)\leq\mathbf{d}_{\rm H}\big(X_{0},\{\0\}\big)$, $\PP$-a.s., which also means that $\E\mathbf{d}_{\rm H}\big(X^{(g)\downarrow}_{t},\{\0\}\big)<\infty$. For $X^{(g)\uparrow}$, it is first observed that
\begin{equation*}
  \mathbf{d}_{\rm H}\big(X^{(g)\uparrow}_{t},\{\0\}\big)\leq\sup_{s\in[0,t]}\mathbf{d}_{\rm H}\big(X^{(g)}_{s},\{\0\}\big),\quad\PP\text{-a.s.}
\end{equation*}
Then, by the properties of the Hausdorff distance and Starr's corollary to the Shapley--Folkman theorem (see [Starr, 1969] \cite{S2}) as well as [Kisielewicz, 2020b, Chapter V Theorem 5.2.1] \cite{K8} it follows that
\begin{align}\label{6.4}
  \mathbf{d}_{\rm H}\big(X^{(g)\uparrow}_{t},\{\0\}\big)&\leq\mathbf{d}_{\rm H}(X_{0},\{\0\})+\E\sup_{s\in[0,t]}\bigg\{\mathbf{d}_{\rm H}\bigg(\int^{s}_{0}g(s,v)K_{1}(v-)\dd v,\{\0\}\bigg) \nonumber\\
  &\quad+\mathbf{d}_{\rm H}\bigg(\int^{s}_{0}\int_{\|z\|\geq1}g(s,v)K_{3}(v-,z)N(\dd z,\dd v),\{\0\}\bigg) \nonumber\\
  &\quad+(\sqrt{d}+1)\bigg(\mathbf{d}_{\rm H}\bigg(\int^{s}_{0}g(s,v)K_{2}(v-)\dd W_{v},\{\0\}\bigg) \nonumber\\
  &\qquad+\mathbf{d}_{\rm H}\bigg(\int^{s}_{0}\int_{0<\|z\|<1}g(s,v)K_{4}(v-,z)\tilde{N}(\dd z,\dd v),\{\0\}\bigg)\bigg)\bigg\}.
\end{align}
By (\ref{6.2}) we immediately have in (\ref{6.4}) that
\begin{equation*}
  \E\sup_{s\in[0,t]}\mathbf{d}_{\rm H}\bigg(\int^{s}_{0}g(s,v)K_{1}(v-)\dd v,\{\0\}\bigg)\leq\int^{t}_{0}|g(t,s)|\dd s\E\sup_{s\in[0,t]}\mathbf{d}_{\rm H}(K_{1}(s),\{\0\})<\infty,
\end{equation*}
and likewise,
\begin{equation*}
  \E\sup_{s\in[0,t]}\mathbf{d}_{\rm H}\bigg(\int^{s}_{0}\int_{\|z\|\geq1}g(s,v)K_{3}(v-,z)N(\dd z,\dd v),\{\0\}\bigg)<\infty.
\end{equation*}
Besides, given that $g$ is nonsingular, note that by setting $\bar{g}=\sup_{t\in[0,T]}\sup_{s\in[0,t)}|g(t,s)|<\infty$ we have
\begin{equation*}
  \mathbf{d}_{\rm H}\bigg(\int^{t}_{0}g(t,s)K_{2}(s-)\dd W_{s},\{\0\}\bigg)\leq\bar{g}\mathbf{d}_{\rm H}\bigg(\int^{t}_{0}K_{2}(s-)\dd W_{s},\{\0\}\bigg),\quad\PP\text{-a.s.}
\end{equation*}
On the right-hand side, the process $\mathbf{d}_{\rm H}\big(\int^{\cdot}_{0}K_{2}(s-)\dd W_{s},\{\0\}\big)$ is a square-integrable submartingale (see, e.g., [Kisielewicz, 2020b, Chapter V Corollary 5.5.1] \cite{K8}), and so applying Doob's maximal inequality together with H\"{o}lder's inequality yields
\begin{align*}
  \E\sup_{s\in[0,t]}\mathbf{d}_{\rm H}\bigg(\int^{s}_{0}g(s,v)K_{2}(v-)\dd W_{v},\{\0\}\bigg)&\leq2\bar{g}\bigg(\E\mathbf{d}^{2}_{\rm H}\bigg(\int^{t}_{0}K_{2}(s-)\dd W_{s},\{\0\}\bigg)\bigg)^{1/2}\\
  &\leq2\bar{g}\bigg(\int^{t}_{0}\sum_{k}\E\|f_{2,k}(s-)\|^{2}_{\rm F}\dd s\bigg)^{1/2},
\end{align*}
where $f_{2,k}$'s are the elements in $K_{2}$ by construction and the second inequality uses [Kisielewicz, 2020b, Chapter V Corollary 5.4.3] \cite{K8}. Similarly, we can build up to
\begin{align*}
  &\quad\E\sup_{s\in[0,t]}\mathbf{d}_{\rm H}\bigg(\int^{s}_{0}\int_{0<\|z\|<1}g(s,v)K_{4}(v-,z)\tilde{N}(\dd z,\dd v),\{\0\}\bigg) \\
  &\leq2\bar{g}\bigg(\int^{t}_{0}\int_{0<\|z\|<1}\sum_{k}\E\|f_{4,k}(s-,z)\|^{2}\nu(\dd z)\dd s\bigg)^{1/2},
\end{align*}
where $f_{4,k}$'s constitute $K_{4}$. These put back into (\ref{6.4}) show that $\E\mathbf{d}_{\rm H}\big(X^{(g)\uparrow}_{t},\{\0\}\big)<\infty$ for every $t\in[0,T]$.

The right continuity with respect to time can be readily established from the properties of the Hausdorff distance and the right-continuity of $X^{(g)}$.
\end{proof}

In contrast, if $g$ happens to be singular, then the process $X^{(g)\uparrow}$ is explosive as long as the jump component remains, resulting in the following corollary.

\begin{corollary}\label{cor:2}
Suppose that $\nu\not\equiv0$ in (\ref{6.1}). If the kernel $g$ has the singularity property that $\lim_{s\nearrow t}|g(t,s)|=\infty$ for every $t\in(0,T]$, then there exists with positive probability $t\in(0,T]$ such that $\mathbf{d}_{\rm H}\big(X^{(g)\uparrow}_{t},\{\0\}\big)=\infty$ for any $s\in[t,T]$.
\end{corollary}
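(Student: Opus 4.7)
The plan is to reduce the claim to Theorem \ref{thm:6} via two trivial structural observations: $X^{(g)}_t\subset X^{(g)\uparrow}_t$ for every $t\in[0,T]$, and $X^{(g)\uparrow}_s\supset X^{(g)\uparrow}_t$ whenever $s\geq t$, both immediate from the definition $X^{(g)\uparrow}_t=\overline{\co}\bigcup_{s\in[0,t]}X^{(g)}_s$. Together these reduce the corollary to exhibiting, with positive probability, a single random time $t\in(0,T]$ at which $\mathbf{d}_{\rm H}(X^{(g)}_t,\{\0\})=\infty$; the monotonicity then propagates the explosion to every $s\in[t,T]$.

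To produce such a $t$, since $\nu\not\equiv 0$ I would pick a nonzero selector of either $K_{3}$ (if $\nu(\{\|z\|\geq1\})>0$) or $K_{4}$ (if $\nu(\{0<\|z\|<1\})>0$) and apply Theorem \ref{thm:6} to the corresponding jump integral appearing in (\ref{6.1}), obtaining on an event of positive probability a random $t\in(0,T]$ at which that single summand has Hausdorff distance $\infty$ from $\{\0\}$. Writing $X^{(g)}_t=A+B$ with $A$ the exploding jump summand and $B$ the Minkowski aggregate of $X_{0}$, the Aumann integral, the It\^{o} integral and the remaining jump integral, the standard reverse bound $\mathbf{d}_{\rm H}(A+B,\{\0\})\geq\mathbf{d}_{\rm H}(A,\{\0\})-\mathbf{d}_{\rm H}(B,\{\0\})$ then transfers the explosion from $A$ to $X^{(g)}_t$ provided $\mathbf{d}_{\rm H}(B,\{\0\})<\infty$, $\PP$-a.s.

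The residual, and in my view principal, technical task is therefore the pathwise a.s.\ finiteness of $B$. For the Aumann part this is immediate from Theorem \ref{thm:3}; for the It\^{o} integral and for the compensated small-jump integral (in the case where it is not the exploding summand), the finiteness of the generating sets $K_{2}$ and $K_{4}$ combined with the square-integrability $g(t,\cdot)\in\mathds{L}^{2}([0,t);\mathds{R})$ reduces matters to finitely many c\`adl\`ag square-integrable martingales, each with $\PP$-a.s.\ bounded sample paths on the compact interval $[0,T]$; closed convex hulls and Minkowski sums of boundedly many such processes remain bounded. Evaluating at the random explosion time raises no additional issue, because that time is generated by the Poisson jump mechanism acting on a $z$-region disjoint from the one driving the non-exploding summands, and because every non-exploding summand has a.s.\ finite supremum over $[0,T]$ by the c\`adl\`ag property on a compact interval. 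The Minkowski-sum inequality then closes the argument.
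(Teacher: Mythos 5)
Your proposal is correct and follows essentially the same route as the paper's proof: monotonicity of $\mathbf{d}_{\rm H}\big(X^{(g)\uparrow}_{\cdot},\{\0\}\big)$, the containment $X^{(g)}_{t}\subset X^{(g)\uparrow}_{t}$, and Theorem \ref{thm:6} applied to a jump summand of (\ref{6.1}). The only real difference is that you devote most of your effort to proving $\PP$-a.s.\ boundedness of the non-exploding aggregate $B$, which the paper leaves implicit and which is in fact stronger than needed: since every element of $B$ is a finite vector of $\mathds{R}^{d}$, the inclusion $A+B\supset A+\{b\}$ for a single selector $b$ already places a translate of the unbounded set $A$ inside $X^{(g)}_{t}$, giving $\mathbf{d}_{\rm H}\big(X^{(g)}_{t},\{\0\}\big)=\infty$ without any boundedness or c\`adl\`ag-path considerations (the latter being delicate anyway for convoluted integrals with singular kernels).
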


\begin{proof}
Note that as $X^{(g)\uparrow}$ is set-increasing, $\mathbf{d}_{\rm H}\big(X^{(g)\uparrow},\{\0\}\big)$ is a nondecreasing process. Then we use Theorem \ref{thm:6} and observe that, since the time $t$ of explosion, $\mathbf{d}_{\rm H}\big(X^{(g)\uparrow}_{t},\{\0\}\big)\geq\mathbf{d}_{\rm H}\big(X^{(g)}_{t},\{\0\}\big)=\infty$, $\PP$-a.s.
\end{proof}

The implication of Corollary \ref{cor:2} is that set-increasing constructions for set-valued convoluted integrals with respect to Poisson random measures with singular kernels essentially lead to a killed process where the killed states sit at directed infinity, namely $\infty\mathbf{n}$, where $\mathbf{n}\in\mathds{R}^{d}$ is a unit normal. In other words, the resultant increasing process generally takes values in the extended (Euclidean) space $\bar{\mathds{R}}^{d}$, and upon taking extended values, the increasing process cannot be integrably bounded.

Some examples in one dimension are due as usual.

\medskip

\textsl{Example 8.}\quad Consider the process defined in Example 7, based on which construct the monotone processes $X^{(g)\downarrow}$ and $X^{(g)\uparrow}$. If $\beta\geq1$ then both processes are $\mathbb{F}$-non-anticipating and right-continuous according to Theorem \ref{thm:7}. Also, it is clear that conditions (\ref{6.2}) are well met for the choices of integrands. Hence, $X^{(g)\downarrow}$ and $X^{(g)\uparrow}$ are integrably bounded. However, if $\beta<1$, then $X^{(g)\uparrow}_{t}=\bar{\mathds{R}}$, $\PP$-a.s., for every $t\in(0,T]$, despite that $X_{0}\in\mathrm{Cl}(\mathds{R})$, because $N$ has infinite variation; in such a case the increasing process may turn out to be boring as it is killed the moment after time 0.

\medskip

\textsl{Example 9.}\quad Let $g$ be the Molchan--Golosov kernel (refer to Example 3) and $K_{1}=K_{3}=K_{4}=\{0\}$ while $K_{2}=\overline{\co}_{\mathds{L}^{2}}\{0,1\}$. Then the process
\begin{equation*}
  X^{(g)}_{t}=[0,1]\times\int^{t}_{0}(t-s)^{\beta-1}\;_{2}\mathrm{F}_{1}\bigg(-\beta,\beta-1;\beta;-\frac{t-s}{s}\bigg)\dd W_{s}=:[0,1]\times W^{(\beta)}_{t},\quad t\in[0,T]
\end{equation*}
is merely a collection of $[0,1]$-scaled two-sided fractional Brownian motions, and the monotone processes are easily seen for every $t\in[0,T]$ to be $X^{(g)\downarrow}_{t}=\{0\}$, $\PP$-a.s. and $X^{(g)\uparrow}_{t}=\big[m^{(\beta)}_t{},M^{(\beta)}_{t}\big]$, $\PP$-a.s., where $m^{(\beta)}$ and $M^{(\beta)}$ are the running minimum and maximum, respectively, of the fractional Brownian motion (see [Za\"{\i}di and Nualart, 2003] \cite{ZN}). In this case, all processes are integrably bounded and continuous regardless of the value of $\beta>1/2$.

\medskip

\section{Concluding remarks}\label{sec:7}

In this paper we have defined the convoluted stochastic integral under a square-integrable Volterra-type kernel for a ($p\geq1$)-integrably bounded set-valued stochastic process with the boundedness property (\ref{2.3}), which guarantees that the resultant integral can be well understood and analyzed in an $\mathds{L}^{p}$ space. Volterra-type kernels are deemed general enough to accommodate most practical interests, especially the incorporation of short- or long-range dependence in financial modeling. Based on set-valued convoluted integral functionals (Definition \ref{def:2}), we define the corresponding set-valued integrals as random variables whose collection of measurable selectors equals the closed decomposable hull of these functionals (Definition \ref{def:3}), which matches classical definitions ([Kisielewicz, 2012] \cite{K5} and [Zhang et al., 2013] \cite{ZMO}) at a general level.

Importantly, we have demonstrated (Theorem \ref{thm:4}) that, similar to set-valued It\^{o} integrals, set-valued stochastic integrals with respect to an infinite-variation Poisson random measure need not be integrably bounded with a bounded integrand, depending on the existence of distinct deterministic selectors. These results well complement the findings of [Zhang et al., 2013] \cite{ZMO}, [Michta, 2015] \cite{M5}, and [Kisielewicz, 2020a] \cite{K7}. Based on the proof of Theorem \ref{thm:4}, the intuition is that due to the infinite variation of the integrators, if the integrands exhibit certain levels of separation in the $\mathds{L}^{2}$ space, then it is possible to construct a sequence of elements belonging to the integral ($\PP$-a.s.) whose supremum (which is a decomposable combination) is not (square-)integrable. We remark that the study of such integrals driven by infinite random measures is not reasonably substitutable by that of set-valued It\^{o} integrals due to their crucial role in attaining specific path regularities, even for a semimartingale -- a good example would be the stability index $\alpha\in(0,2)$ of a symmetric stable process.

We have also shown (Theorem \ref{thm:6}) that if the kernel is singular, then the set-valued convoluted stochastic integrals as stochastic processes can take extended vector values in the presence of jumps. Intuitively, this is because each time when there is a discontinuity in the sample paths, the singularity will necessarily generate a point of directed infinity. However, in general this does not affect the integrable boundedness (if it holds without singularity) at generic time, but simulation methods need to be reconsidered for these extended values; e.g., Euler discretization may not be reliably implemented, especially if the process is to model a utility set, in which case there must be a well-defined limiting utility associated with such an index. Furthermore, when set-monotone processes are considered based on a given set-valued convoluted stochastic integral according to the procedures in Section \ref{sec:6}, the coexistence of singular kernels and infinitely active jumps should be avoided in construction as they lead to instantly killed processes that are never integrably bounded.

We conclude the paper by presenting the following general version of a set-valued convoluted stochastic differential inclusion that arises from the main results and that one may expect to work with taking into account all possible shapes of ingredients,
\begin{align}\label{7.1}
  X_{t}&\in\cl_{\mathds{L}^{1}}\bigg(X_{0}+\sum_{k\in\mathbb{K}}\bigg(\int^{t}_{0}g_{k}(t,s)F_{1,k}(s-,X_{s-})\dd s+\int^{t}_{0}g_{k}(t,s)\overline{\co}_{\mathds{L}^{2}}F_{2,k}(s-,X_{s-})\dd W_{s} \nonumber\\
  &\qquad+\int^{t}_{0}\int_{\|z\|\geq1}g_{k}(t,s)F_{3,k}(s-,X_{s-},z)N(\dd s,\dd z)\nonumber\\
  &\qquad+\int^{t}_{0}\int_{0<\|z\|<1}g_{k}(t,s)\overline{\co}_{\mathds{L}^{2}}F_{4,k}(s-,X_{s-},z)\tilde{N}(\dd s,\dd z)\bigg)\bigg),\quad t\in[0,T],
\end{align}
where $g_{k}$'s are suitable Volterra-type kernels (Definition \ref{def:1}) for $k\in\mathbb{K}$, $\mathbb{K}$ being a finite set, $F_{1}:[0,T]\times\mathds{R}^{d}\mapsto\mathrm{Cl}(\mathds{R}^{d})$ and $F_{3}:[0,T]\times\mathds{R}^{d}\times(\mathds{R}^{d}\setminus\{\0\})\mapsto\mathrm{Cl}(\mathds{R}^{d})$ are both multifunctions with measurable selectors in $\breve{S}^{1}(F_{1})$ and $\breve{S}^{2}(F_{3})$ (i.e., $\mathds{L}^{1,2}$ with a finite temporal supremum; see (\ref{2.3})), and $F_{2}$ and $F_{4}$ both consist of finitely many square-integrably bounded functions -- specifically, $F_{2}=\big\{(f_{2,n}:[0,T]\times\mathds{R}^{d}\mapsto\mathds{R}^{d\times m}):\sup\big\{\int_{\mathds{R}^{d}}f^{2}_{2,n}(t,x)\dd x:t\in[0,T]\big\}<\infty\big\}$ and $F_{4}=\big\{(f_{4,n}:[0,T]\times\mathds{R}^{d}\times(\mathds{R}^{d}\setminus\{\0\})\mapsto\mathds{R}^{d}): \sup\big\{\int_{\mathds{R}^{d}}\int_{0<\|z\|<1}f^{2}_{4,n}(t,x,z)\nu(\dd z)\dd x:t\in[0,T]\big\}<\infty\big\}$. Notably, the Poisson random measure can have infinite variation. Other aforementioned remarks aside, studying the differential inclusion (\ref{7.1}) in various applications will also be interesting for further research.

\medskip

\appendix
\gdef\thesection{Appendix}

\section{Attempt to prove Theorem \ref{thm:4} via a Slepian-type inequality}\label{Appx}

A symmetric non-Gaussian infinitely divisible random vector $X$ with values in $\mathds{R}^{n}$ ($n\in\mathds{N}_{++}$) is said to be of type G if its characteristic function takes the form
\begin{equation}\label{A.1}
  \E e^{\langle\ii u,X\rangle}=\exp\bigg(-\int_{\mathds{R}^{n}}\psi\bigg(\frac{\langle u,x\rangle^{2}}{2}\bigg)\eta(\dd x)\bigg),\quad u\in\mathds{R}^{n},
\end{equation}
for some function $\psi$ with completely monotone derivatives on $\mathds{R}_{++}$ (i.e., all derivatives with alternating signs) satisfying that $\psi(0)=\lim_{x\rightarrow\infty}\psi'(x)=0$ and some $\upsigma$-finite measure $\eta$ on $\mathds{R}^{n}$. Type-G infinitely divisible random vectors can be constructed from Gaussian mixtures of infinitely divisible distributions; refer to [Rosi\'{n}ski, 1991, Section 2] \cite{R} for more details.

The lemma below, adapted from [Samorodnitsky and Taqqu, 1993, Theorem 3.1] \cite{ST1}, gives a Slepian-type inequality (see [Samorodnitsky and Taqqu, 1994] \cite{ST2} for some related inequalities) for type-G random vectors, which can be compared to Slepian's lemma ([Slepian, 1962] \cite{S1}) for Gaussian processes.

\begin{lemma}\label{lem:A}
Let $X$ and $Y$ be two type-G infinitely divisible random vectors in $\mathds{L}^{1}(\Omega;\mathds{R}^{n})$, $n\geq2$, with L\'{e}vy measures $\nu_{X}$ and $\nu_{Y}$ and conjugate L\'{e}vy measures $\hat{\nu}_{X}$ and $\hat{\nu}_{Y}$ (see [Samorodnitsky and Taqqu, 1993, Definition 3.2] \cite{ST1}), respectively. If
\begin{equation*}
  \hat{\nu}_{X}\big(y\in\mathds{R}^{n}:\;[(y_{j}-y_{j'})^{2}:j<j']\in A\big)\geq\hat{\nu}_{Y}\big(y\in\mathds{R}^{n}:\;[(y_{j}-y_{j'})^{2}:j<j']\in A\big),
\end{equation*}
for every increasing set $A$ (in the sense that if $a\in A$ and $b\in\mathds{R}^{n(n-1)/2}_{+}$ satisfy $b_{l}\geq a_{l}$, $\forall l\in\mathds{N}\cap[1,n(n-1)/2]$, then $b\in A$), then
\begin{equation*}
  \E\max_{j\in\mathds{N}\cap[1,n]}X_{j}\geq\E\max_{j\in\mathds{N}\cap[1,n]}Y_{j}.
\end{equation*}
\end{lemma}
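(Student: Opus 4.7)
The plan is to reduce the comparison to a conditional Gaussian Slepian-type inequality by exploiting the scale-mixture structure that characterizes type-G infinitely divisible random vectors. By Bernstein's theorem applied to $\psi'$ in (\ref{A.1}), both $X$ and $Y$ admit, in distribution, series representations of the form $\sum_{k\geq 1} V_k^{1/2} G_k$, where the $G_k$'s are i.i.d.\ centered Gaussian vectors in $\mathds{R}^n$ whose covariance is encoded by $\eta$, and the $V_k$'s form an independent positive Poissonian family whose intensity is governed by the conjugate L\'{e}vy measure $\hat\nu_X$ (respectively $\hat\nu_Y$); this is the construction given in [Rosi\'{n}ski, 1991, Section 2] \cite{R}.

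Conditional on the mixing configuration $\{V_k\}$, the vector $X$ becomes centered Gaussian with conditional covariance $\Sigma_X$ that is an explicit linear functional of $\{V_k\}$, so that the pairwise conditional squared increments $\E[(X_j - X_{j'})^2 \mid \{V_k\}]$ form an $\mathds{R}^{n(n-1)/2}_+$-valued random vector whose distribution is governed by $\hat\nu_X$. The classical Sudakov--Fernique inequality for centered Gaussians yields conditionally that larger pairwise squared increments imply a larger expected maximum. I would then observe that the hypothesis on increasing sets $A\subset\mathds{R}^{n(n-1)/2}_+$ is precisely the coupling condition which ensures that the random vector of conditional squared increments for $X$ dominates, componentwise, the analogous object for $Y$ in the stochastic sense. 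A standard monotone coupling then provides joint realizations on which the conditional inequality is applied pathwise, and the integrability assumption $X,Y\in\mathds{L}^1(\Omega;\mathds{R}^{n})$ justifies the Fubini exchange used to remove the conditioning.

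The main obstacle is structural rather than computational: verifying that the stated ordering of $\hat\nu_X$ and $\hat\nu_Y$ under the pushforward $y\mapsto [(y_j - y_{j'})^2 : j < j']$ transfers to a stochastic ordering of the \emph{entire} Poissonian superposition (and not merely of a single summand). The Poissonian additivity of the intensities is the key tool here: if the two intensity measures can be coupled so that one dominates the other on increasing sets, then by the superposition property of Poisson random measures, the full configurations inherit the corresponding stochastic domination. Once this reduction is secured, the conclusion follows from a direct application of Slepian/Sudakov--Fernique in the conditional Gaussian world, paralleling the proof of [Samorodnitsky and Taqqu, 1993, Theorem 3.1] \cite{ST1}.
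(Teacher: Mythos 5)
The paper does not prove this lemma at all: it is imported verbatim (``adapted from'') [Samorodnitsky and Taqqu, 1993, Theorem 3.1] \cite{ST1}, so there is no internal proof to compare against. Your sketch is, in substance, a reconstruction of the proof strategy of that cited theorem: represent a type-G vector conditionally as a centered Gaussian whose conditional covariance is built from a Poisson configuration with intensity governed by the conjugate L\'{e}vy measure, apply a Gaussian comparison (Slepian/Sudakov--Fernique) given the configuration, and then integrate out the randomization. That is the correct architecture, and the integrability hypothesis does exactly the job you assign to it.

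The one step you should not wave through is the passage from the hypothesis --- domination of the \emph{pushforwards} of $\hat{\nu}_{X}$ and $\hat{\nu}_{Y}$ under $y\mapsto[(y_{j}-y_{j'})^{2}:j<j']$ on all increasing sets --- to stochastic domination of the full vector of conditional squared increments. ``Poissonian additivity'' (superposition) only works when one intensity dominates the other \emph{as a measure}, i.e., when the difference is a positive measure; domination on upper sets is strictly weaker (consider $\delta_{(2,2)}$ versus $\delta_{(1,1)}$ in $\mathds{R}^{2}_{+}$: the first dominates the second on every upper set but is not larger as a measure, and no superposition decomposition exists). What is actually needed is the stochastic-monotonicity theorem for infinitely divisible random vectors with positive jumps --- essentially a Strassen-type coupling that moves each point of the smaller configuration upward --- which is Theorem 2.1/2.4 of \cite{ST1} and is the genuine key lemma behind their Theorem 3.1. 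Also, your series representation should be the standard one $\sum_{k}g_{k}x_{k}$ with $g_{k}$ i.i.d.\ scalar standard normals and $x_{k}$ the Poisson points of intensity $\hat{\nu}$, so that the conditional covariance is $\sum_{k}x_{k}x_{k}^{\top}$; the form $\sum_{k}V_{k}^{1/2}G_{k}$ with ``covariance encoded by $\eta$'' conflates the mixing variables with the point configuration. With the monotonicity theorem substituted for the superposition argument, your proof closes.
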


Let us resume the setting of Step 3 in the proof of Theorem \ref{thm:4}. To invoke Lemma \ref{lem:A}, we have to find a L\'{e}vy measure $\nu_{0}$ on $\mathds{R}^{n}\setminus\{\0\}$, also for an $n$-dimensional type-G random vector as $\bar{\nu}_{i;t}$, such that the associated conjugate measures satisfy
\begin{equation}\label{A.2}
  \hat{\bar{\nu}}_{i^{\ast};t}\big(y\in\mathds{R}^{n}:\;[(y_{j}-y_{j'})^{2}:j<j']\in A\big)\geq\varphi_{n}(\rho)=\hat{\nu}_{0}\big(y\in\mathds{R}^{n}:\;[(y_{j}-y_{j'})^{2}:j<j']\in A\big),
\end{equation}
for every increasing set $A\subset\mathds{R}^{n(n-1)/2}_{+}$ and some decreasing function $\varphi_{n}:\mathds{R}_{+}\mapsto\mathds{R}_{+}$ (see below).

If we choose $\nu_{0}$ for an $n$-dimensional L\'{e}vy process with \text{i.i.d.} components having a type-G distribution, supported on the union of axes $\bigcup^{n}_{j=1}\{[0,\dots,0,y_{j},0,\dots,0]:y_{j}\in\mathds{R}\}\setminus\{\0\}$, $n\geq2$, then any increasing set $A$ as appearing in (\ref{A.2}) with nonzero measure under $\hat{\nu}_{0}$ has to contain a vector with exactly $n-1$ nonzero equal values, say $\rho>0$, and $(n-1)(n-2)/2$ zero values. For example, consider the particular choice $A=\mathds{R}^{n(n-1)/2}_{+}+\{[\rho,\dots,\rho,0,\dots,0]\}$. Then, by the symmetry property,
\begin{equation}\label{A.3}
  \varphi_{n}(\rho)=2\hat{\nu}_{0,1}([\sqrt{\rho},\infty))>0,
\end{equation}
where, similar as before, $\hat{\nu}_{0,1}$ denotes the conjugate L\'{e}vy measure of the first component.

On the other hand, by noting that for symmetric $\alpha$-stable random vectors, the conjugate measure $\hat{\bar{\nu}}_{i^{\ast};t}$ is nothing but a scalar multiple (see [Samorodnitsky and Taqqu, 1993, Example 3.2] \cite{ST1}), from (\ref{4.9}) it follows that (with $A=\mathds{R}^{n(n-1)/2}_{+}+\{[\rho,\dots,\rho,0,\dots,0]\}$ and a positive, only-$\alpha$-depending constant $\tilde{C}_{\alpha}$)
\begin{align}\label{A.4}
  \hat{\bar{\nu}}_{i^{\ast};t}\big(y\in\mathds{R}^{n}:\;[(y_{j}-y_{j'})^{2}:j<j']\in A\big)&=\hat{\bar{\nu}}_{i^{\ast};t}\big(y\in\mathds{R}^{n}:\; (y_{1}-y_{j'})^{2}\geq\rho,\forall j'>1\big) \nonumber\\
  &=\tilde{C}_{\alpha}\sum^{d}_{i=1}\int^{t}_{t_{0}}\int^{\infty}_{0+} \frac{\tilde{C}_{\alpha}\big|g(t,s)b_{1,i^{\ast},i}(s)\big|^{\alpha}}{|y_{1}|^{\alpha+1}} \nonumber\\
  &\qquad\times\mathds{1}_{\big\{y^{2}_{1}\geq\rho b^{2}_{1,i^{\ast},i}(s)\big/\inf_{j'>1}(b_{1,i^{\ast},i}(s)-b_{j',i^{\ast},i}(s))^{2}\big\}}\dd y_{1}\dd s \nonumber\\
  &=\frac{2\tilde{C}_{\alpha}}{\rho^{\alpha/2}}\sum^{d}_{i=1}\int^{t}_{t_{0}}|g(t,s)|^{\alpha} \inf_{j'>1}|b_{1,i^{\ast},i}(s)-b_{j',i^{\ast},i}(s)|^{\alpha}\dd s,
\end{align}
where the last equality follows from rewriting the set in $\hat{\nu}_{i}$ using the two conditions and the symmetry of $\hat{\nu}_{i}$, $i\in\mathds{N}\cap[1,d]$, and which vanishes in the limit as $n\rightarrow\infty$ for any bounded functions $b_{j',i^{\ast},i}$. Comparing (\ref{A.3}) and (\ref{A.4}), it is not possible for the inequality in (\ref{A.2}) to hold for every $n\geq2$. In a way, this shows that the Slepian-type inequality in Lemma \ref{lem:A} is ``too sufficient'' for a reproduction of the argument in the Gaussian case.

\end{document}